\newtheorem{thm}{Theorem}[section]
\newtheorem{lem}[thm]{Lemma}
\newtheorem{prop}[thm]{Proposition}
\theoremstyle{definition}
\theoremstyle{remark}
\newtheorem{rem}[thm]{Remark}
\numberwithin{equation}{section}
\newcommand{\Real}{\mathbb R}
\newcommand{\eps}{\varepsilon}
\newcommand{\E}{\mathbb{E}}
\newcommand{\F}{\mathcal{F}}
\newcommand{\sign}{\mathrm{sign}}
\newcommand{\Res}{\mathrm{Res}}
\renewcommand{\Re}{\mathrm{Re}}
\renewcommand{\Im}{\mathrm{Im}}
\def\Xint#1{\mathchoice
   {\XXint\displaystyle\textstyle{#1}}%
   {\XXint\textstyle\scriptstyle{#1}}%
   {\XXint\scriptstyle\scriptscriptstyle{#1}}%
   {\XXint\scriptscriptstyle\scriptscriptstyle{#1}}%
   \!\int}
\def\XXint#1#2#3{{\setbox0=\hbox{$#1{#2#3}{\int}$}
     \vcenter{\hbox{$#2#3$}}\kern-.5\wd0}}
\def\dashint{\Xint-}
\begin{document}

\title[
Eigenproblems for fractional processes
]
{
Exact spectral asymptotics of fractional processes
}

\author{P. Chigansky}%
\address{Department of Statistics,
The Hebrew University,
Mount Scopus, Jerusalem 91905,
Israel}
\email{pchiga@mscc.huji.ac.il}

\author{M. Kleptsyna}%
\address{Laboratoire de Statistique et Processus,
Universite du Maine,
France}
\email{marina.kleptsyna@univ-lemans.fr}

\author{D. Marushkevych}%
\address{Laboratoire de Statistique et Processus,
Universite du Maine,
France}
\email{dmytro.marushkevych.etu@univ-lemans.fr}

\thanks{P. Chigansky is supported by ISF 558/13 grant}
\keywords{
Gaussian processes, 
fractional Brownian motion,
spectral asymptotics,
eigenproblem, 
optimal linear filtering,
Karhunen–-Lo\`{e}ve expansion
}%

\date{\today}%

\begin{abstract}

Eigenproblems frequently arise in theory and applications of stochastic processes, but only a few 
have explicit solutions. Those which do, are usually solved by reduction to the generalized 
Sturm--Liouville theory for differential operators. This includes the Brownian motion and a whole class 
of processes, which derive from it by means of linear transformations. 
The more general eigenproblem for the {\em fractional} Brownian motion (f.B.m.) is not solvable in closed form, 
but the exact asymptotics of its eigenvalues and eigenfunctions can be obtained, using a method based on analytic properties 
of the Laplace transform. In this paper we consider two processes closely related to the f.B.m.: the fractional 
Ornstein--Uhlenbeck process and the integrated fractional Brownian motion. While both derive from the f.B.m. 
by simple linear transformations, the corresponding eigenproblems turn out to be much more complex and their asymptotic 
structure exhibits new effects.  
 
\end{abstract}

\maketitle


\section{Introduction}

Covariance operator of a centered stochastic process $X=(X_t, t\in [0,1])$ with covariance 
kernel $K(s,t)=\E X_s X_t$  is the self-adjoint integral operator
$$
f\mapsto (K f)(t) = \int_0^1 K(s,t)f(s)ds.
$$
The associated eigenproblem consists of finding all pairs $(\lambda, \varphi)$ satisfying the equation 
\begin{equation}\label{eigpr}
K \varphi = \lambda \varphi.
\end{equation}
For square integrable kernels, this problem is well known to have countably many solutions $(\lambda_n, \varphi_n)$, 
$n\in \mathbb{N}$, where eigenvalues $\lambda_n$ are real and nonnegative and converge to zero, when put in the decreasing 
order, and the corresponding eigenfunctions $\varphi_n$ form a complete orthonormal basis in $L^2(0,1)$. 

Even though spectral decomposition is one of the earliest and most useful tools 
in the theory of stochastic processes, eigenproblems are rarely tractable and only a few are known to have reasonably explicit 
solutions. One general solution technique is reduction to linear differential equations. It is applicable whenever the covariance operator 
can be identified with the Green function of a differential operator. In this case the two share the same 
eigenstructure, which is typically more accessible for the latter through the generalized Sturm--Liouville theory
(see \cite{NN04ptrf}, \cite{N09, N09b}).

The simplest example is the Brownian motion with $K(s,t)=s\wedge t$, for which \eqref{eigpr} 
readily reduces to the boundary value problem for the simple o.d.e 
\begin{equation}\label{fBmode}
\lambda \varphi''(t) + \varphi(t)=0
\end{equation}
subject to $\varphi(0)=0$ and $\varphi'(1)=0$. The explicit solution yields familiar formulas: 
\begin{equation}
\label{Bmeig}
\lambda_n = \frac 1{\nu_n^2 }\quad \text{and} \quad \varphi_n(t) = \sqrt{2} \sin \nu_n t, \quad 
\end{equation}
where $\nu_n =  \pi n-\pi/2$. Essentially the same method works for a whole class of processes, which 
derive from the Brownian motion through linear transformations, including the integrated Brownian motion, 
the Brownian bridge, the Ornstein--Uhlenbeck process and others.

The standard Brownian motion is a special case of the {\em fractional} Brownian motion (f.B.m.), 
which is the centered Gaussian process $B^H =(B^H_t, t\in [0,1])$ 
with  covariance function 
$$
K(s,t) = \frac 1 2\left(s^{2H}+t^{2H}-|s-t|^{2H}\right),
$$
where $H\in (0,1)$ is its Hurst exponent. This is the only $H$-self similar Gaussian process with stationary increments. 
Unlike the standard Brownian motion, corresponding to $H=\frac 1 2$, the f.B.m. is neither a semimartingale 
nor a Markov process for any other value of $H$.  For $H>\frac 1 2$ the increments $B^H_n-B^H_{n-1}$, $n\in \mathbb{N}$ are positively correlated and 
their covariances are not summable. This long range dependence makes the f.B.m. useful in a variety of applications, see e.g. \cite{PT17}.

Despite the fact that the f.B.m. has been extensively studied since its introduction in \cite{MvN68}, 
its eigenstructure remained elusive for quite a while, see \cite{M82}. The exact first order asymptotics 
of the eigenvalues was obtained only a decade and a half ago in \cite{Br03a, Br03b} (see also \cite{LP04}, \cite{NN04tpa}) 
and this result remained state of the art until recently, when a more detailed asymptotic picture was revealed in \cite{ChK}:

\begin{thm}\label{main-thm-fbm}\

\medskip

\noindent
{\bf 1.} For $H\in (0,1)$ the eigenvalues are given by the formula 
\begin{equation}
\label{lambda_n_fBm}
\lambda_n =  \sin (\pi H)\Gamma(2H+1) \nu_n^{-2H-1} \qquad n=1,2,...
\end{equation}
where the sequence $\nu_n$ satisfies 
\begin{equation}\label{fBmnu}
\nu_n =  \Big(n -\frac 1 2\Big)\pi + \frac {1-2H}{ 4}  \pi  + \arcsin \frac{\ell_H}{\sqrt{1+\ell_H^2}}+O(n^{-1}) \quad \text{as}\ n\to\infty,
\end{equation}
with the constant 
$
\displaystyle
\ell_H:=\frac
{
\sin \frac{\pi}{2} \frac{ H-1/2}{H+1/2}
}
{\sin \frac \pi 2 \frac{1}{H+1/2}}.
$

\medskip
\noindent
{\bf 2.} The corresponding unit norm eigenfunctions admit the approximation 
\begin{multline}\label{phinfBm}
\varphi_n(x)  
= 
 \sqrt 2 \sin\bigg( \nu_{n} x+\frac {2H-1}{ 8}  \pi-\arcsin \frac{\ell_H}{\sqrt{1+\ell_H^2}}\bigg) \\
 - \frac {\sqrt{2H+1}} { \pi }  \int_0^{\infty}    \rho_0(u)
\bigg(
e^{-  x \nu_n u} \frac{ u-\ell_H}{ \sqrt{1+\ell_H^2}}+
(-1)^{n}   e^{-  (1-x) \nu_nu}\
\bigg)du + n^{-1}  r_n(x),
\end{multline}
where the residual $r_n(x)$ is bounded by a constant, depending only on $H$, and
$\rho_0(u)$ is an explicit function (see \eqref{rhofOU} below). 

\medskip
\noindent
{\bf 3.}  The eigenfunctions satisfy
\begin{equation}\label{eigffun}
\varphi_n(1) = (-1)^{n} \sqrt{2H+1}   \big(1+O(n^{-1})\big)
\quad\text{and}\quad
\int_0^1 \varphi_n(x)dx =  -\sqrt{\frac{2H+1}{1+\ell_H^2}}\; \nu_n^{-1}.
\end{equation}

\end{thm}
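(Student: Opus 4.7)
The plan is to follow the Laplace-transform / Riemann--Hilbert strategy that has proved effective for related f.B.m. eigenproblems: convert the integral equation \eqref{eigpr} into a functional equation for the Laplace transform of $\varphi_n$, recast it as a boundary value problem on a contour, solve the latter by Wiener--Hopf factorisation, and then extract the asymptotics of $\lambda_n$, $\varphi_n$ and of the boundary quantities in \eqref{eigffun}.

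First I would start from $K(s,t)=\tfrac12(s^{2H}+t^{2H}-|s-t|^{2H})$ and apply a fractional differentiation of appropriate order that reduces the singular part $|s-t|^{2H}$ to a convolution whose Fourier symbol is a pure power of $i\xi$. Extending $\varphi_n$ by zero outside $[0,1]$ and taking Laplace transforms with scaled argument $z=\nu_n u$, under the ansatz $\lambda_n=\sin(\pi H)\Gamma(2H+1)\nu_n^{-2H-1}$, I expect to arrive at a functional identity of the schematic form
\[
\Lambda(u)\,\widehat\varphi_n(\nu_n u)+\Lambda(-u)e^{-\nu_n}\,\widehat\varphi_n(-\nu_n u)=P_n(u),
\]
where $\Lambda$ is an explicit algebraic symbol carrying the fractional exponent $2H$ and $P_n$ is a low-degree polynomial encoding boundary data. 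This is a Riemann--Hilbert problem on the imaginary axis.

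The second step is the Wiener--Hopf factorisation $\Lambda=\Lambda_+\Lambda_-$ with $\Lambda_\pm$ analytic in the right/left half-plane, followed by Sokhotski--Plemelj to obtain an explicit integral representation of $\widehat\varphi_n$ on each half-plane. The constant $\ell_H$ in the statement should emerge as a ratio of boundary values of $\Lambda_\pm$, matching the stated trigonometric expression. Imposing the compatibility condition that the two half-plane representations glue into an entire function --- equivalently, that $\varphi_n$ be supported in $[0,1]$ --- yields a transcendental quantisation equation; its asymptotic inversion as $n\to\infty$ produces \eqref{fBmnu}, with the explicit phase $\tfrac{1-2H}{4}\pi+\arcsin\bigl(\ell_H/\sqrt{1+\ell_H^2}\bigr)$ coming from $\arg\Lambda_\pm$. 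Inverting the Laplace transform by contour deformation then gives \eqref{phinfBm}: the poles at $\pm i\nu_n$ produce the leading sine, while the two branch-cut integrals yield the boundary-layer terms localised near $x=0$ and $x=1$, with density $\rho_0$ built from the jump of $\Lambda_\pm$ across the positive real axis. Finally, \eqref{eigffun} is read off by specialising \eqref{phinfBm} at $x=1$ (where one of the boundary-layer terms collapses and the other produces the clean $(-1)^n\sqrt{2H+1}$ limit) and by noting that $\int_0^1\varphi_n(x)dx=\widehat\varphi_n(0)$, which follows directly from the explicit representation evaluated at $z=0$.

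The main obstacle will be quantitative: carrying the asymptotic expansion of the quantisation condition and of the Laplace inversion to sufficient precision to obtain the uniform $O(n^{-1})$ remainders in \eqref{fBmnu} and \eqref{phinfBm}. This requires sharp control of $\Lambda_\pm$ near its branch point, precise enough both to produce the explicit constants $\ell_H$ and the phase shift, and to absorb all subleading contributions from the cut integrals into the bounded residual $r_n(x)$. Extracting these constants --- rather than merely first-order asymptotics as in \cite{Br03a,Br03b,LP04,NN04tpa} --- is the genuinely new technical content, and is where the analytic structure of the Laplace transform must be exploited most carefully.
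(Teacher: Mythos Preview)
This theorem is not proved in the present paper; it is quoted from \cite{ChK}, and the paper only summarises the method (Section~5.2) before applying it to the new kernels. So the comparison below is against the template visible in the proofs of Theorems~\ref{thm-fOU} and~\ref{thm-ifBm}, which mirror the argument of \cite{ChK}.

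Your overall architecture---Laplace transform, reduction to a boundary value problem, factorisation, quantisation condition, contour inversion---is the right one and matches the paper's strategy. But the concrete route differs in a way that matters. You propose a single functional equation for $\widehat\varphi_n$ itself and a Riemann--Hilbert problem on the imaginary axis; the paper does not do this. Instead it uses the Gamma-integral identity $|x-y|^{-\alpha}=\frac{1}{\Gamma(\alpha)}\int_0^\infty t^{\alpha-1}e^{-t|x-y|}dt$ to produce the representation $\widehat\varphi(z)=P(z)-Q(z)\bigl(\Phi_0(z)+e^{-z}\Phi_1(-z)\bigr)/\Lambda(z)$ with two \emph{auxiliary} sectionally holomorphic functions $\Phi_0,\Phi_1$ cut along the \emph{positive real axis}. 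Removing the singularities of this expression yields a coupled Riemann boundary value problem \eqref{Hp} for $(\Phi_0,\Phi_1)$ on $\mathbb R_{>0}$, plus algebraic constraints \eqref{algc} at the zeros of $\Lambda$. After the homogeneous solution $X(z)$ is divided out, the problem collapses to an \emph{integro-algebraic system}: explicit Fredholm equations \eqref{qp} on $(0,\infty)$ with an exponentially small kernel, together with a scalar equation like \eqref{Im}. The constant $\ell_H$ is not a ratio of boundary values but rather $b_\alpha=\frac{1}{\pi}\int_0^\infty\theta_0(u)\,du$, the first moment of the argument of $\Lambda^+$; the phase in \eqref{fBmnu} comes from combining this with $\arg X_c(i)$.

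Your sketch is not wrong in spirit, but the schematic equation $\Lambda(u)\widehat\varphi_n(\nu_n u)+\Lambda(-u)e^{-\nu_n}\widehat\varphi_n(-\nu_n u)=P_n(u)$ is a Carleman-type relation for an entire function, and it is not obvious how to factorise and invert it directly with the required $O(n^{-1})$ control. The paper's detour through $(\Phi_0,\Phi_1)$ is what makes the analysis tractable: the integral equations \eqref{qp} have contracting operators for large $\nu$, which is precisely what delivers the uniform remainder estimates you correctly identify as the hard part.
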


\medskip 
Formulas \eqref{lambda_n_fBm}-\eqref{fBmnu} furnish asymptotic approximation of the eigenvalues, accurate up to the second 
order with an estimate for residual.  Asymptotics \eqref{phinfBm} reveals that the eigenfunctions comprise of the oscillatory term, 
similar to that in \eqref{Bmeig}, and the boundary layer, whose contribution persists only near the endpoints of the interval. 
The boundary layer vanishes in the standard Brownian case $H=\frac 1 2$. 
Expressions \eqref{eigffun} give the exact asymptotics for two particular linear functionals of the eigenfunctions,
which turn out to be useful in applications discussed in \cite{ChK}. It is possible to derive explicit formulas for other 
functionals of interest, either directly, using approximation \eqref{phinfBm}, or through intermediate steps of the proof.

Theorem \ref{main-thm-fbm} is proved in \cite{ChK} by a technique, based on analytic properties of the Laplace transform. 
It reduces the eigenproblem to a certain integro-algebraic system of equations, which turns out to be more amenable to 
asymptotic analysis. It would be natural to expect that, just as in the standard Brownian case, the same method applies to 
processes, which derive from the f.B.m by means of linear transformations. While ultimately this is indeed the case, such extension 
is far from being straightforward and its implementation faces a whole new level of complexity. 

In this paper we consider two processes: the integrated fractional Brownian motion and 
the Ornstein--Uhlenbeck type process, generated by linear stochastic equation driven by fractional noise.  
While these two processes are related to the f.B.m. by simple linear functionals, their equivalent 
integro-algebraic systems have much more complicated form and are derived in entirely different ways. 
The corresponding asymptotic spectral structure also turns out to be significantly more intricate and exhibit 
new effects: for example, eigenfunctions of the integrated f.B.m. have {\em multiscale} boundary layer, 
whose components vanish at both polynomial and exponential rates. It is quite surprising that such basic 
operation as integration makes the second order term in \eqref{nuifBm} as subtle as \eqref{DeltaH}-\eqref{bbb},
compared with that in \eqref{fBmnu} for the f.B.m. itself.

%

\section{Main results} 

\subsection{Integrated fractional Brownian motion}
The first integral of the Brownian motion  
$ 
X_t = \int_0^t B_s ds
$ 
is the centered process with covariance function 
$$
K(s,t) = \int_0^s\int_0^t (u\wedge v) du dv.
$$
For covariance operator with this kernel, eigenproblem \eqref{eigpr} also reduces to a simple differential equation,
whose explicit solution yields  
$$
\lambda_n =  \frac 1{\nu_n^4}, \quad n=1,2,...
$$
with $\nu_n=\pi n - \pi/2 + O(e^{-n})$, being the increasing sequence of positive roots of equation 
$$
\cos \nu \cosh \nu + 1 =0.
$$
The corresponding normalized eigenfunctions satisfy 
$$
\varphi_n(t) \propto  \frac{\cos \nu_n  +\cosh \nu_n}{ \sin \nu_n  +\sinh \nu_n  }\big(\sinh \nu_n t-\sin \nu_n t\big) -
\big(\cosh \nu_n t-\cos \nu_n t\big),
$$
where proportionality symbol $\propto$ stands for equality up to a multiplicative factor asymptotic to 1, which normalizes 
$\varphi_n$ to unit $L^2(0,1)$ norm.

The eigenstructure of integrated processes was comprehensively studied in \cite{GHT03}, \cite{NN04ptrf}. However  
none of the approaches suggested so far applies to the integrated f.B.m. $X_t := \int_0^t B^H_s ds$. 
Covariance function of this process  
\begin{equation}
\label{KifBm}
K(s,t) = \int_0^t\int_0^s \tfrac 1 2\left(u^{2H}+v^{2H}-|v-u|^{2H}\right) dudv,
\end{equation}
satisfies the scaling property 
$$
K(sT,tT) = T^{2H+2} K(s,t),\quad s,t\in [0,1] \quad T>0,
$$
and hence no generality is lost if  eigenproblem \eqref{eigpr} is considered on the unit interval.  
Our first result details the corresponding asymptotic spectral structure:

\begin{thm}\label{thm-ifBm}
\

\medskip 
\noindent 
{\bf 1}. 
The eigenvalues of covariance operator with kernel \eqref{KifBm} satisfy
$$
\lambda_n = \sin (\pi H) \Gamma(2H+1)\nu_n^{-2H-3}\quad n=1,2,...
$$
where    
\begin{equation}\label{nuifBm}
\nu_n = \pi \Big(n-\frac 1 2\Big) + \frac{1-2H}{4}\pi + \arctan \Delta(H) + O(n^{-1}),
\end{equation}
and constant $\Delta(H)$ is given by the expression
\begin{equation}\label{DeltaH}
\Delta(H) = \frac{\frac 1 3 b_0^3-b_2}{b_1-b_1^2 +\frac 1 2 b_0^2+b_2b_0-\frac 1 {12} b_0^4}
\end{equation}
with  
\begin{equation}\label{bbb}
b_0 = \frac{
\sin\Big(\frac \pi 2 \frac{H+\frac 1 2}{H+\frac 3 2}\Big) 
}
{
\sin \Big(\frac \pi 2 \frac 1 {H+\frac 3 2}\Big)
},
\qquad  b_1 = \frac 1 2,\qquad  
b_2 = \frac 1 3 \frac
{
\sin\Big(\frac {3\pi} 2 \frac{H+\frac 1 2}{H+\frac 3 2}\Big)
}
{
\sin \Big(\frac {3\pi} 2 \frac 1 {H+\frac 3 2}\Big)
}.
\end{equation}

\medskip 
\noindent
{\bf 2}. The corresponding unit norm eigenfunctions admit the approximation
$$
\varphi_n(x) = \varphi^{(1)}_n(x) + \varphi^{(2)}_n(x) + \varphi^{(3)}_n(x) + n^{-1} r_n(x)
$$
where residual $r_n(x)$ is bounded uniformly in both $n\in \mathbb{N}$ and $x\in [0,1]$ and 

\medskip

\begin{enumerate}
\addtolength{\itemsep}{0.7\baselineskip}
\renewcommand{\theenumi}{\alph{enumi}}
\item the oscillatory term is given by
$$
\varphi^{(1)}_n(x) = \sqrt{2} \cos \Big(\nu_n x + \frac {2H+1}8\pi - \arctan \Delta(H)\Big);
$$

\item the polynomial boundary layer term is given by 
$$
\varphi^{(2)}_n(x) = -
\frac{\sqrt{2H+3}}{\pi }
\int_{0}^\infty  \rho_0(t)
 \left(  Q_0(t)  e^{-t\nu_n x}  - (-1)^n Q_1(t) e^{-t\nu_n(1-x)}\right) dt,
$$
where function $\rho_0(t)$ and polynomials $Q_0(t)$ and $Q_1(t)$ are given by explicit expressions 
\eqref{rho0ifBm2}-\eqref{QifBm2} for $H<\frac 1 2$ and  \eqref{rhorho}-\eqref{Q0Q1} for $H>\frac 1 2$;

\item the exponential boundary layer term vanishes for $H\le \frac 1 2$ and otherwise is given by 
$$
\varphi^{(3)}_n(x) = C_0 e^{- c \nu_n x}   \cos\Big(  s \nu_n x  + \varkappa_0 \Big)
+
C_1 e^{- c \nu_n (1-x)}   \cos\Big(  s \nu_n (1-x)  + \varkappa_1 \Big) 
$$
where amplitudes $C_0$ and $C_1$ and phases $\varkappa_0$ and $\varkappa_1$ are explicit constants and 
$$
c := \cos \frac{\pi}{2}\frac{ H-\frac 1 2}{H+\frac  3 2}>0 \quad\text{and} \quad 
s :=\sin \frac{\pi}{2}\frac{ H-\frac 1 2}{H+\frac  3 2}>0.
$$ 

\end{enumerate}

\medskip
\noindent
{\bf 3.} The eigenfunctions satisfy 
$$
\varphi_n(1)  = 
 (-1)^n   \sqrt{2H+3} \big( 1+O(n^{-1})\big)
$$
and
$$
\int_0^1\varphi_n(x)dx   =  
\nu_n^{-1}   \sqrt{2H+3}\, C_H \big( 1+O(n^{-1})\big)
$$
with explicit constant $C_H$, given by \eqref{funphin} for $H<\frac 1 2$ and 
\eqref{tildeC} for $H>\frac 1 2$.

\end{thm}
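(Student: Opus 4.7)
The proof follows the Laplace transform approach of \cite{ChK}, which recasts the spectral problem as an integro-algebraic system amenable to asymptotic analysis. Compared to the f.B.m.\ case treated in Theorem \ref{main-thm-fbm}, the new ingredients are the higher-order boundary conditions $\varphi(0)=\varphi'(0)=0$ forced by the double integration in \eqref{KifBm}, and the emergence of additional zeros of the characteristic symbol in the physical half-plane when $H>\tfrac12$, which are responsible for the new exponential boundary layer.

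The plan is to normalize the eigenvalue by setting $\lambda = \sin(\pi H)\Gamma(2H+1)\nu^{-2H-3}$ and rewrite the eigenequation in terms of $\hat\varphi(z)=\int_0^1 e^{-zt}\varphi(t)dt$. Using the Laplace symbol of the f.B.m.\ covariance established in \cite{ChK}, combined with the two extra integrations in \eqref{KifBm}, one obtains a meromorphic identity for $\hat\varphi(\nu z)$ on a half-plane. A Wiener--Hopf-type decomposition across the branch cut at the origin splits this identity into (i) an integral equation on $(0,\infty)$ for a density $\rho$ that encodes the boundary layer, and (ii) an algebraic system for the finite-dimensional data $\varphi(0),\varphi'(0),\varphi(1),\varphi'(1)$ together with a few integrated moments of $\rho$. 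Asymptotically as $\nu\to\infty$, the leading order of this system yields the oscillatory term $\varphi^{(1)}_n$, and its solvability condition at the next order produces the transcendental equation \eqref{nuifBm}; the constant $\Delta(H)$ in \eqref{DeltaH}--\eqref{bbb} emerges as the specific combination of the moments $b_0,b_1,b_2$ of the leading density $\rho_0$ that is enforced by the two extra constraints at $t=0$. The polynomial layer $\varphi^{(2)}_n$ is then the inverse transform of $\rho_0$ against suitable polynomials $Q_0,Q_1$, with different explicit forms in the two regimes $H<\tfrac12$ and $H>\tfrac12$ coming from different residue configurations.

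The exponential layer $\varphi^{(3)}_n$ arises from a conjugate pair of zeros of the characteristic symbol located at $\nu(\pm s+ic)$, with $c,s$ as stated; these zeros lie in the right half-plane precisely when $H>\tfrac12$, and a residue contribution at them produces the damped-oscillatory term with the amplitudes $C_0,C_1$ and phases $\varkappa_0,\varkappa_1$. Part (3) then follows by specialization of the Laplace representation: $\varphi_n(1)$ from the asymptotic behavior as $z\to\infty$, and $\int_0^1\varphi_n(x)dx=\hat\varphi_n(0)$ from the value at the origin; the three-component structure makes both leading coefficients computable in closed form. The main obstacle is the asymptotic matching in the second step: unlike the f.B.m.\ case, where a single solvability condition fixes the phase shift, here four boundary conditions at $t=0,1$ interact with two distinct boundary layers, and identifying the precise rational combination of the moments of $\rho_0$ that yields $\Delta(H)$ is delicate. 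Tracking how the complex-root contribution switches on exactly at $H=\tfrac12$, and verifying that the two resulting explicit formulas for $\rho_0$, $Q_0$ and $Q_1$ are mutually consistent, is the other subtle point that distinguishes the integrated f.B.m.\ from the f.B.m.\ itself.
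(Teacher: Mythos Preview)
Your outline follows essentially the same route as the paper: Laplace transform, reduction to a Riemann boundary value problem via the structural symbol $\Lambda(z)$, an equivalent integro-algebraic system whose solvability condition yields \eqref{nuifBm}--\eqref{bbb}, and inversion of the transform to read off the three-component eigenfunction structure with the extra residues producing the exponential layer for $H>\tfrac12$. One point to sharpen: the transition at $H=\tfrac12$ is not that a fixed pair of zeros crosses into the right half-plane, but that $\Lambda(z)$ has only the two zeros $\pm i\nu$ when $H<\tfrac12$ and acquires four \emph{additional} zeros $\pm z_{\pm}=\pm\nu e^{\pm i\phi}$ (with $\phi=\tfrac{\pi}{2}\tfrac{H-1/2}{H+3/2}$) when $H>\tfrac12$; this changes the size of the algebraic system (a $3\times3$ determinant condition versus a $6\times6$ one) and is why the two regimes must be handled separately and then shown to give the same $\Delta(H)$.
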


\medskip 

As for the f.B.m  the eigenfunctions in this case also comprise
of the oscillatory and boundary layer terms. The effect of the boundary layer is asymptotically negligible in the 
interior of the interval, and it pushes the eigenfunctions to zero at $x=0$ and to $\pm \sqrt{2H+3}$ at $x=1$. 
However, unlike before, it exhibits multiscale behavior: for $H>\frac 1 2$, the two components $\varphi^{(2)}_n(x)$ 
and $\varphi^{(3)}_n(x)$ vanish at different, polynomial and exponential, rates respectively as $n\to\infty$.

\subsection{Fractional Ornstein--Uhlenbeck process}
In stochastic analysis the Ornstein--Uhlenbeck process $X=(X_t,\, t\in [0,1])$ can be derived from the standard 
Brownian motion $B=(B_t,\, t\in [0,1])$ as the solution of the Langevin equation 
\begin{equation}\label{Leq}
X_t = X_0 + \beta \int_0^t X_s ds + B_t, 
\end{equation}
where $\beta\in \Real$ is the drift parameter and $X_0\sim N(0,\sigma^2)$ is the initial condition, 
independent of $B$. A nontrivial initial condition introduces a rank one perturbation to the covariance operator, 
which is inessential for our purposes (see \cite{N09b} and \cite{ChKM2}), and hereafter we will set $X_0=0$ for simplicity.  
In this case, the covariance function of $X$ is given by the formula 
$$
K(s,t) = \int_0^{t\wedge s} e^{\beta(t-\tau)}e^{\beta(s-\tau)}d\tau.
$$

Eigenproblem \eqref{eigpr} with this kernel reduces to a boundary 
value problem similar to \eqref{fBmode}, whose explicit solution yields  
\begin{equation}
\label{OUeig}
\lambda_n = \frac 1{\nu_n^2+\beta^2}\quad \text{and}\quad \varphi_n(t) \propto  \sqrt{2} \sin \nu_n t,
\end{equation}
where $\nu_n = \pi n -   \pi/ 2 + O(n^{-1})$ is the increasing sequence of positive roots of equation 
$$
\nu/\beta = \tan \nu.
$$

\medskip

In the fractional setting, the Ornstein--Uhlenbeck process can be defined in a number of nonequivalent ways \cite{CKM03}, 
and here we consider solution of the Langevin equation \eqref{Leq}, driven by the f.B.m. 
The covariance function is given in this case by the formula (see, e.g., \cite{PT17}):
\begin{equation}\label{OUKst}
K_\beta(s,t) = 
\int_0^t    e^{\beta(t-v)} \frac d{dv} \int_0^s H |v-u|^{2H-1} \sign(v-u)  e^{\beta(s-u)}du dv.
\end{equation}
Note that this kernel satisfies the scaling property
\begin{equation}\label{Kbeta}
K_\beta(sT,tT) = T^{2H} K_{\beta T}(s,t), \quad s,t\in [0,1]\quad T>0
\end{equation}
and therefore asymptotic approximation of solutions to the eigenproblem on an arbitrary interval $[0,T]$ 
can be obtained from that on the unit interval.   
For $H>\frac 1 2$ integration and derivative in \eqref{OUKst} are interchangeable and it simplifies to
\begin{equation}
\label{KstHlarge}
K_\beta(s,t) =  
\int_0^t  \int_0^s   e^{\beta(t-v)} e^{\beta(s-u)} H(2H-1)  |v-u|^{2H-2} du dv.
\end{equation}
The fractional Ornstein--Uhlenbeck process of this type inherits long-range dependence property from the f.B.m., see \cite{CKM03}.

\medskip 

Our next result generalizes \eqref{OUeig} beyond the standard Brownian case $H=\frac 1 2$: 

\medskip

\begin{thm}\label{thm-fOU}
For any $H\in (0,1)$ the eigenvalues of covariance operator \eqref{OUKst} satisfy
\begin{equation}\label{lambda_n_fOU}
\lambda_n = \sin (\pi H) \Gamma(2H+1)\frac{\nu_n^{1-2H}}{\nu_n^2+\beta^2}, \quad n=1,2,...
\end{equation}
where $\nu_n$ is the sequence with asymptotics \eqref{fBmnu}. The unit norm eigenfunctions $\varphi_n$ admit approximation 
\eqref{phinfBm} and satisfy \eqref{eigffun}. 
\end{thm}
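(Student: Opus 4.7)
The plan is to reduce the fOU eigenproblem to the fBm one of Theorem~\ref{main-thm-fbm} via the linear transformation that solves the Langevin equation. With $X_0=0$, the solution of \eqref{Leq} admits the Duhamel representation $X_t = B^H_t + \beta \int_0^t e^{\beta(t-s)} B^H_s\,ds$, which may be written as $X = J_\beta B^H$ with the bounded, boundedly invertible operator $(J_\beta f)(t) := f(t) + \beta \int_0^t e^{\beta(t-s)} f(s)\,ds$, whose inverse is $(J_\beta^{-1} g)(t) = g(t) - \beta \int_0^t g(s)\,ds$. At the level of covariance operators this gives the factorization $K^H_\beta = J_\beta K^H J_\beta^*$, so that the fOU eigenproblem is a self-adjoint conjugation of a perturbation of the fBm one. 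For $H=\tfrac 1 2$ this mechanism already explains how $1/\nu_n^2$ gets replaced by $1/(\nu_n^2+\beta^2)$ in passing from \eqref{Bmeig} to \eqref{OUeig}, and the goal is to upgrade it to arbitrary $H\in(0,1)$.

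Next I would apply the Laplace transform approach of \cite{ChK} to \eqref{OUKst} directly. Setting $\Phi(z) := \int_0^1 e^{-zx}\varphi(x)\,dx$ and transforming the eigenequation $(K^H_\beta \varphi)(t) = \lambda \varphi(t)$, the exponential weights $e^{\beta(t-\cdot)}$ convert under the Laplace transform into rational factors $z/(z\mp\beta)$, so that the resulting integro-algebraic system is, up to these rational factors, term by term the same as the one derived for the fBm in \cite{ChK}. The combined factor contributed by $J_\beta$ and $J_\beta^*$ is $z^2/(z^2-\beta^2)$; evaluated at the characteristic values $z = \pm i\nu$ that localise the eigenvalues it becomes $\nu^2/(\nu^2+\beta^2)$. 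Multiplying through by this factor, the fOU system takes exactly the fBm form of \cite{ChK} with $\lambda$ replaced by $\lambda(\nu^2+\beta^2)/\nu^2$, which produces \eqref{lambda_n_fOU} once the fBm formula \eqref{lambda_n_fBm} is read off. All remaining $\beta$-dependence in the coefficient equations is of order $O(\beta^2/\nu_n^2)$ at the characteristic values.

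With the integro-algebraic system thus shown to differ from the fBm one only by subleading corrections in $n$, the asymptotic machinery of \cite{ChK} -- contour deformations, Wiener--Hopf/saddle-point analysis, and matching of boundary-layer contributions -- transfers essentially verbatim. The transcendental equation determining $\nu_n$ is unchanged to the order stated in \eqref{fBmnu}, so the eigenfunction approximation \eqref{phinfBm} and the functional identities \eqref{eigffun} are preserved, with the $\beta$-perturbation absorbed into the $O(n^{-1})$ residuals. The hard part will be the clean identification in the second step that the $\beta$-corrections collapse into the single quadratic shift $\nu^2\mapsto\nu^2+\beta^2$: because \eqref{OUKst} contains a fractional derivative (notably for $H<\tfrac 1 2$) sandwiched between the exponential conjugations from $J_\beta$ and $J_\beta^*$, the Laplace-domain bookkeeping of the boundary contributions and the regularisation constants is delicate, and must be carried out before the factor $z^2/(z^2-\beta^2)$ can be read off cleanly. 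Once this is done, the theorem follows as a refinement of Theorem~\ref{main-thm-fbm}.
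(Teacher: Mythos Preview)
Your overall strategy is aligned with the paper's: both reduce the fOU problem to a perturbation of the fBm integro-algebraic system from \cite{ChK}, and both identify that $\beta$ enters the eigenvalue exactly through the quadratic shift $\nu^2\mapsto\nu^2+\beta^2$ while leaving the equation for $\nu_n$ unchanged to the stated order. However, the mechanism you describe in your second paragraph is not correct and, as stated, does not yield a proof.

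The claim that the Laplace-domain system is ``term by term the same'' as the fBm one up to a global multiplicative factor $z^2/(z^2-\beta^2)$ is false. What actually happens (and what the paper does) is this: introducing $\psi(x)=e^{-\beta x}\int_x^1 e^{\beta r}\varphi(r)\,dr$, the eigenproblem becomes
\[
\int_0^1 c_\alpha|x-y|^{-\alpha}\psi(y)\,dy=\lambda\big(\beta^2\psi(x)-\psi''(x)\big),
\]
so under the Laplace transform the structural function is
\[
\Lambda(z)=\frac{\Gamma(\alpha)\lambda}{c_\alpha}(z^2-\beta^2)+\int_0^\infty\frac{2t^\alpha}{t^2-z^2}\,dt,
\]
i.e.\ the fBm $\Lambda$ with $z^2$ replaced by $z^2-\beta^2$ \emph{in the rational term only}. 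This is an additive shift, not a multiplicative factor on the whole system. The zero condition $\Lambda(i\nu)=0$ then gives \eqref{lambda_n_fOU} exactly, which is the part of your argument that survives.

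The genuine work, which your proposal does not address, is that the angle $\theta(t)=\arg\Lambda^+(t)$---which drives the Riemann boundary value problem, the function $h$, the operator $A$, and ultimately the transcendental equation for $\nu_n$---depends on $\beta$ in a nontrivial way, not through any simple factor. The paper establishes the uniform estimate $|\theta(u\nu)-\theta_0(u)|\le g(u)(\beta/\nu)^2$ with an explicit integrable $g$, and propagates this through $X(z)$, $h$, and the contraction estimates to show that the integro-algebraic system coincides with the fBm one up to $O(\nu^{-2})$ corrections. Only then do \eqref{fBmnu}, \eqref{phinfBm} and \eqref{eigffun} carry over. Your factorization $K^H_\beta=J_\beta K^H J_\beta^*$ is correct but, being a congruence rather than a similarity, does not by itself relate the two spectra; you still need the perturbative analysis of $\theta$, which is where the content lies.
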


\medskip 
This result exhibits a curious dependence separation of the spectral asymptotics on its parameters. The drift parameter 
$\beta$ enters eigenvalues formula \eqref{lambda_n_fOU} only in the dominator, while all of its other ingredients depend solely 
on the Hurst parameter $H$. This includes $\nu_n$, which turns out to be the same as for the f.B.m., at least up to the 
second asymptotic term. Moreover, the eigenfunctions do not depend on $\beta$ in the first approximation.

\section{A sample application}
Spectral decomposition of stochastic processes has numerous applications, see a partial list in \cite{ChK}, and accurate approximations  
often prove useful.
As an example, in this section we revisit the classical problem of signal estimation in 
white noise. Consider the process 
\begin{equation}\label{Yt}
Y_t = \mu \int_0^t X_s ds + \sqrt{\eps} B_t, \quad t\in [0,T]
\end{equation} 
where $\mu$ and $\eps>0$ are real constants, $B_t$ is the Brownian motion and $X_t$ is an independent {\em signal} 
process, whose trajectory is to be estimated given the observed trajectory of  
$Y=(Y_t, t\in [0,T])$.
Here $\mu$ is interpreted as the {\em channel gain} and the formal derivative of $B_t$ is viewed as the white 
noise disturbance, whose intensity is controlled by $\eps$. 

The optimal in the mean squared sense estimator of $X_t$ given the observation path is the conditional expectation 
$\widehat{X}_t = \E (X_t|\F^Y_T)$, where $\F^Y_T=\sigma\{Y_t, t\in [0,T]\}$. If $X$ is a centred Gaussian process with 
covariance function $K(s,t)=\E X_sX_t$, this estimator is given by suitably defined stochastic integral 
$$
\widehat X_t = \frac 1 \mu \int_0^T h(s,t) dY_s,
$$ 
where kernel $h(s,t)$ solves the integral equation 
\begin{equation}\label{WHeq}
\eps h(s,t) + \int_0^T \mu^2 K(r,s) h(r,t)dr = \mu^2 K(s,t), \quad 0\le s\le t \le T.
\end{equation}
The corresponding minimal mean square error $P_\eps(t) = \E (X_t-\widehat X_t)^2$ satisfies    
$$
P_\eps(t) = K(t,t) - \int_0^T h(r,t) K(r,t)dr = \frac \eps {\mu^2} h(t,t),
$$ 
and an important engineering question is how it scales with the noise intensity.

In the Gauss-Markov case, when $X$ is the Ornstein--Uhlenbeck process driven by the standard Brownian motion,
integral equation \eqref{WHeq} can be solved explicitly by reduction to the Riccati o.d.e. 
(see, e.g., Theorem 12.10 in \cite{LS2}) and, $P_\eps(t)$ can be computed in a closed form. A calculation gives the 
following {\em high signal-to-noise} asymptotics  
$$
 P_\eps(t) \simeq \sqrt{\eps/\mu^2}  \begin{cases}
\frac 1{2 }  & t\in (0,T)\\
 1 & t=T
\end{cases}\qquad \text{as\ } \eps\to 0,
$$
where $f(\eps)\simeq g(\eps)$ stands for $f(\eps) = g(\eps)\big(1+o(1)\big)$ as $\eps\to 0$. 
Note that the {\em smoothing} estimator $\widehat X_t$ with $t<T$ outperforms the {\em filtering} estimator 
$\widehat {X}_T$ by factor $2$ in the limit. 

Generalizing this result to fractional setting, when $X$ is the fractional Ornstein - Uhlenbeck process \eqref{Leq}, 
cannot be easily approached along the same lines due to lack of the Markov property. The more tractable alternative 
is to use spectral approximations from Theorem \ref{thm-fOU}, which give  the following result:

\begin{prop} 
The minimal mean squared error in the estimation problem of the fractional 
Ornstein--Uhlenbeck process \eqref{Leq} given observations \eqref{Yt} satisfies  
\begin{equation}\label{Peps}
P_\eps(t) \simeq (\eps/\mu^2)^{\frac{2H}{1+2H}}  
 \frac{\Big(\sin (\pi H) \Gamma(2H+1)\Big)^{\frac 1 {1+2H}}}{\sin \frac \pi {2H+1}}
 \begin{cases}
 \frac{1}{2H+1} & t\in (0,T)\\
 1 & t=T
 \end{cases}\qquad \text{as\ } \eps\to 0.
\end{equation}
\end{prop}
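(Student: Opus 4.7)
The plan is to diagonalize the Wiener--Hopf-type equation \eqref{WHeq} in the Karhunen--Lo\`eve basis of the signal $X$ and then evaluate the resulting spectral series asymptotically as $\eps\to 0$, using the eigen-data supplied by Theorem \ref{thm-fOU}. Expanding $h(\cdot,t)=\sum_n h_n(t)\varphi_n(\cdot)$ in the orthonormal eigenbasis of the covariance operator on $[0,T]$ and inserting Mercer's decomposition $K(s,t)=\sum_n \lambda_n \varphi_n(s)\varphi_n(t)$ into \eqref{WHeq}, orthogonality decouples the coefficients to give $h_n(t)=\frac{\mu^2\lambda_n}{\eps+\mu^2\lambda_n}\varphi_n(t)$. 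Substitution in $P_\eps(t)=(\eps/\mu^2)h(t,t)$ yields the key identity
\begin{equation*}
P_\eps(t)=\sum_{n=1}^\infty \frac{\eps\lambda_n}{\eps+\mu^2\lambda_n}\,\varphi_n(t)^2,
\end{equation*}
and the problem reduces to an asymptotic evaluation of this series.

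Next I would feed in the asymptotics from Theorem \ref{thm-fOU}, rescaled from $[0,1]$ to $[0,T]$ via \eqref{Kbeta}: to leading order $\lambda_n\sim C_0\nu_n^{-(2H+1)}$ with $C_0:=\sin(\pi H)\Gamma(2H+1)$ and $\nu_n\sim\pi n/T$, so the weights $\eps\lambda_n/(\eps+\mu^2\lambda_n)$ transition at the characteristic index $n_\eps\sim(\mu^2C_0/\eps)^{1/(2H+1)}$. The eigenfunction approximation \eqref{phinfBm} decomposes $\varphi_n(t)^2$ into an oscillatory $\sin^2$-piece with mean $1/T$, boundary-layer terms concentrated near $t=0$ and $t=T$, and an $O(n^{-1})$ residual. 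For interior $t\in(0,T)$ only the constant piece contributes at leading order, while at the endpoint $t=T$ the explicit limit $\varphi_n(T)^2\to(2H+1)/T$ from item 3 of Theorem \ref{thm-fOU} supplies the additional factor $2H+1$ featured in \eqref{Peps}.

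The remaining sum is a Riemann sum with spacing $\pi/T$; passing to the integral and substituting $\nu=(\mu^2C_0/\eps)^{1/(2H+1)}u$ gives
\begin{equation*}
\sum_n \frac{\eps\lambda_n}{\eps+\mu^2\lambda_n}\sim \frac{T}{\pi}\int_0^\infty\frac{\eps/\mu^2}{1+\eps\nu^{2H+1}/(\mu^2C_0)}\,d\nu = \frac{T}{\pi}\,(\eps/\mu^2)^{\frac{2H}{2H+1}}C_0^{\frac{1}{2H+1}}\int_0^\infty\frac{du}{1+u^{2H+1}}.
\end{equation*}
The last integral equals $\frac{\pi/(2H+1)}{\sin(\pi/(2H+1))}$, and combining with the $1/T$ from the bulk mean of $\varphi_n^2$ reproduces exactly the constants in \eqref{Peps}, with $T$ cancelling as it must.

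The main technical hurdle I anticipate is controlling the subleading ingredients of $\varphi_n(t)^2$ in the series. Squaring \eqref{phinfBm} produces an oscillating term $\cos(2\nu_n t+\mathrm{phase})$, cross-products between the oscillatory and boundary-layer pieces, and an $O(n^{-1})$ residual. Each must be shown to be $o(\eps^{2H/(2H+1)})$ after multiplication by $\eps\lambda_n/(\eps+\mu^2\lambda_n)$ and summation: the oscillating contribution is handled by Abel summation against the monotone weight, whose total variation is $O(\eps/\mu^2)$; the boundary-layer cross-products decay exponentially in $n$ for any fixed interior $t$ and are dominated by a convergent geometric series; the $n^{-1}$ residual yields an $O(\eps\log(1/\eps))$ bound by a standard split at $n=n_\eps$. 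A secondary point is uniformity of the Riemann-sum approximation across the transition window $n\sim n_\eps\to\infty$, which is guaranteed by the second-order expansion $\nu_n=\pi(n-\tfrac12)/T+\mathrm{const}/T+O(n^{-1})$ in \eqref{fBmnu}.
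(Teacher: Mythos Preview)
Your approach is essentially the same as the paper's: spectral decomposition of $h$ in the Karhunen--Lo\`eve basis, Riemann-sum approximation of the main series with the beta-integral evaluation $\int_0^\infty du/(1+u^{2H+1})$, and Abel summation to kill the oscillating $\cos(2\nu_n t+\text{phase})$ contribution for interior $t$. One small inaccuracy worth flagging: the boundary-layer term in \eqref{phinfBm} decays only polynomially in $n$ for fixed interior $t$ (it is $O(n^{-1})$, as the paper notes), not exponentially; this is harmless, since your $O(\eps\log(1/\eps))$ budget for the $n^{-1}$ residual absorbs it just as well.
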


\begin{rem}
The scaling rate $\eps^{\frac{2H}{1+2H}}$ of the estimation error here coincides with the optimal minimax 
rate in the nonparametric estimation problem of deterministic $H$-H\"older signals in the white noise model, 
see \cite{Tsybakov}. 
\end{rem}

\begin{proof}

By scaling property \eqref{Kbeta}, equation \eqref{WHeq} reads
\begin{equation}\label{WHeq1}
\eps h_\eps(u,v) + \int_0^1 \mu^2 T^{2H+1} K_{\beta T}(r,u)   h_\eps(r,v)dr  = \mu^2 T^{2H} K_{\beta T}(u,v), \quad 0\le u\le v \le 1
\end{equation}
where $h_\eps(u,v):= h(uT,vT)$, and 
$$
P_\eps(u) =  \frac \eps {\mu^2} h_\eps(u,u), \quad u\in (0,1).
$$

Expanding the solution of \eqref{WHeq1}  into series of eigenfunctions of $K_{\beta T}$ we get 
$$
h_\eps(u,v) = \sum_{n=1}^\infty \frac{\mu^2T^{2H}}{\eps \lambda_n^{-1} + \mu^2T^{2H+1} }
\varphi_n(u) \varphi_n(v), \quad 0\le u\le v\le 1,
$$
where $\lambda_n$ are the eigenvalues of $K_{\beta T}$. This series is absolutely convergent for any $\eps>0$
and its value diverges to $+\infty$ as $\eps\to 0$. Its first order term asymptotics does not change if the eigenvalues and 
eigenfunctions are replaced with their first order approximations from Theorem \ref{thm-fOU}. 
Let $C:= \sin (\pi H) \Gamma(2H+1)$, then  
\begin{align*}
P_\eps(1) =
& 
\frac \eps {\mu^2} h_\eps(1,1)   =   
\sum_{n=1}^\infty \frac{\eps T^{2H}}{\eps \lambda_n^{-1} + \mu^2T^{2H+1} }\varphi_n^2(1) \simeq \\
&
\sum_{n=1}^\infty \frac{\eps T^{2H}}{\dfrac \eps  C 
 \dfrac{(\pi n)^2+(\beta T)^2}{(\pi n)^{1-2H}}
+ \mu^2T^{2H+1} } (2H+1) \simeq \\
&
(2H+1)\int_1^\infty \frac{\eps T^{2H}}{\dfrac \eps  C 
\big( (\pi x)^{2H+1}+(\beta T)^2 (\pi x)^{2H-1} \big)
+ \mu^2T^{2H+1} } dx \simeq \\
&
 \frac {\eps} {\mu^2T}  
\left(\frac \eps C \frac 1{T^{2H+1}\mu^2}\right)^{-\frac 1{2H+1}}
\frac{2H+1} \pi\int_0^\infty \frac{1 }{y^{2H+1}  + 1   } d y = \\
&
 \frac {\eps} {\mu^2T}  
\left(\frac \eps C \frac 1{T^{2H+1}\mu^2}\right)^{-\frac 1{2H+1}}
\frac 1 {\sin \frac \pi {2H+1}} =
 (\eps/\mu^2)^{\frac {2H}{2H+1}}    
\frac {C^{\frac 1{2H+1}}} {\sin \frac \pi {2H+1}}
\end{align*}
which gives the expression for the filtering error at $t=T$ in \eqref{Peps}. 

Let us now calculate the limit value of $P_\eps(u)$ for $u\in (0,1)$. To this end, note that the contribution of the 
boundary layer term in \eqref{phinfBm} is of order $O(n^{-1})$ and hence 
$$
\varphi_n(u) =  \sqrt{2}\sin  \big(\nu_n u+\phi_H\big) + O(n^{-1}), \quad \text{as\ } n\to\infty,
$$ 
where $\phi_H :=\frac{2H-1} 8 \pi - \arcsin \frac{\ell_H}{\sqrt{1+\ell_H^2}}$. Hence 
\begin{align*}
P_\eps(u) & =  \frac \eps {\mu^2} h_\eps(u,u) =
\sum_{n=1}^\infty \frac{ \eps T^{2H}}{\eps \lambda_n^{-1} + \mu^2T^{2H+1} }
\varphi_n^2(u)= \\
&
\sum_{n=1}^\infty \frac{\eps T^{2H}}{\eps \lambda_n^{-1} + \mu^2T^{2H+1} }
- 
\sum_{n=1}^\infty \frac{ \eps T^{2H}}{\eps \lambda_n^{-1} + \mu^2T^{2H+1} }
\cos    \big(2\nu_n u+2\phi_H\big) := I_1(\eps)+I_2(\eps).
\end{align*}
The first term here differs from the previous case only by constant factor $2H+1$ and hence  
the expression for smoothing error at $t\in (0,T)$ in \eqref{Peps} is obtained, once we show 
that the second term vanishes as $\eps\to 0$. Define $S_0=0$ and 
$$
S_n = \sum_{k=1}^n \cos    \big(2\nu_k u+2\phi_H\big), \quad n\ge 1,
$$
which is a bounded sequence for $u\in (0,1)$. Then  
\begin{align*}
I_2(\eps)
= 
&
\sum_{n=1}^\infty \frac{ \eps T^{2H}}{\eps \lambda_n^{-1} + \mu^2T^{2H+1} }
\big(S_n-S_{n-1}\big) = \\
&
\eps^2 T^{2H}\sum_{n=1}^\infty S_n 
\frac
{
    \lambda_{n+1}^{-1} -\lambda_n^{-1} 
}
{
\big(\eps \lambda_{n+1}^{-1} + \mu^2T^{2H+1}\big)
\big(\eps \lambda_n^{-1} + \mu^2T^{2H+1}\big)
}.
\end{align*}
In view of \eqref{lambda_n_fOU}, for all $n$ large enough
$$
|\lambda_{n+1}^{-1} -\lambda_n^{-1}| \le C_1 n^{2H}  \quad \text{and}\quad \lambda_n^{-1} \ge C_2 n^{2H+1}
$$
with positive constants $C_1$ and $C_2$. Since $S_n$ is bounded, for some constant $C_3$,
\begin{align*}
|I_2(\eps)| \le \, 
&
C_3
\eps^2  \sum_{n=1}^\infty  
\frac
{
    n^{2H}
}
{
\big(\eps n^{2H+1} + 1\big)^2
} \simeq 
C_3
\eps^2  \int_1^\infty  
\frac
{
    x^{2H}
}
{
\big(\eps x^{2H+1} + 1\big)^2
}dx \simeq\\
&
C_3\eps \int_0^\infty  
\frac
{
   y^{2H}
}
{
\big(y^{2H+1} + 1\big)^2
}dy\xrightarrow[\eps\to 0]{}0.
\end{align*}
\end{proof}

\section{Numerical experiments}

Theorems \ref{thm-fOU} and \ref{thm-ifBm} provide an approximation for the eigenvalues and eigenfunctions, which 
is asymptotically exact and therefore more accurate for smaller eigenvalues. By keeping track of all constants 
in the proofs, it is possible to obtain  rough estimates for the residual terms in the suggested formulas; 
however, they are likely to be quite conservative. Numerical experiments, presented in this 
section, indicate that the actual accuracy of our approximation is surprisingly good, already, for relatively small values of $n$.

To solve our eigenproblems numerically we will use the quadrature method, which replaces integration with a numerical approximation.  
Since the kernels under consideration are smooth, it will suffice to work with uniform nodes and weights, in which case the 
eigenproblem \eqref{eigpr} is replaced with the system of linear equations:
$$
 \sum_{i=0}^L K\big(\tfrac i L,\tfrac j L\big)\varphi_i \tfrac 1 L  = \lambda \varphi_j, \quad j=0,...,L.
$$
The eigenvalues $\widehat \lambda_{n,L}$ and the corresponding eigenvectors $\widehat \varphi_{n,L}$, $n=0,...,L$  
for this problem approximate the solutions to the original eigeproblem of interest in the sense: 
$$
\lim_{L\to\infty} \widehat \lambda_{n,L} = \lambda_n\quad \text{and}\quad \lim_{L\to \infty} \widehat\varphi_{n,L}([xL]) = \varphi_n(x).
$$
The convergence can be quantified, under appropriate assumptions on kernel $K$ (see, e.g., \cite{R85}). 
In practical terms, $L$ is chosen, so that its further increase does not improve the accuracy beyond the desired decimal digit.   
We will compare the numerical solutions to the approximations $\widetilde \lambda_n$ and $\widetilde \varphi_n$ 
of the eigenvalues and eigenfunctions,  obtained by truncating the residual terms in our asymptotic formulas.

Our study case is the fractional Ornstein--Uhlenbeck process with $H=\frac 3 4$ and $\beta = -1$. 
A calculation shows that the kernel in \eqref{KstHlarge} can be rewritten in the form  
\begin{align*}
K(s,t)  =  &\;  \phantom{+}   \frac {c_\alpha} {2\beta}e^{\beta(t+s)}
\Big(
\Phi(t, \alpha, \beta)+ \Phi(s, \alpha, \beta)  
\Big) \\
&
-\frac {c_\alpha} {2\beta}   e^{\beta(s-t)} \Big(\Phi(t, \alpha, -\beta)-\Phi(t-s, \alpha, -\beta)\Big)  \\
&
-\frac {c_\alpha} {2\beta}  e^{\beta(t-s)}\Big(\Phi(t-s, \alpha, \beta)+\Phi(s, \alpha, -\beta)\Big), \quad t>s
\end{align*}
where $\alpha = 2-2H$, $c_\alpha = (1-\frac \alpha 2)(1-\alpha)$ and function 
$$
\Phi(t, \alpha,\beta) = \int_0^t e^{-\beta x}x^{-\alpha} dx
$$
is related through a simple scaling to the incomplete Gamma function, routinely available in numerical packages. 

Since $\lambda_n$ rapidly decrease and in view of formula \eqref{lambda_n_fOU}, it will be also interesting to compare the empirical quantity 
$\widehat \nu_{n,L}$, obtained by solving the equation 
\begin{equation}
\label{nonlineq}
\widehat \lambda_{n,L} = \sin (\pi H) \Gamma(2H+1)\frac{\nu^{1-2H}}{\nu^2+\beta^2}
\end{equation}
with the corresponding theoretical approximation $\widetilde \nu_n$, given by  \eqref{fBmnu} after truncating the residual term. 
Transcendental equation \eqref{nonlineq} must also be solved numerically, but its root can be located to within 
{\em any} desired precision and hence this computation does not introduce any significant error. 

\begin{table}[t]
\begin{center}
\resizebox{\textwidth}{!}{%
  \begin{tabular}{ || c | c | c | c | c | c | c | c | c |c|c|}
    \hline
    $n$ & 1 & 2 &  3 & 4 & 5& 6 &7 & 8 & 9 & 10 \\ \hline\hline
     $\widehat \lambda_{n,L}\times 10^3$  & 182.46  &  17.62  &   5.348  & 2.3210 & 1.2519 & 0.7574  & 0.5005 & 0.3495   & 0.2560  & 0.1937 \\ \hline
     $\widehat \nu_{n,L}$  & 1.7133  & 4.8245  &  7.8551  & 11.003  & 14.104 &  17.255 & 20.373 & 23.523  & 26.648  &  29.79 \\ \hline
      $\big|\widehat \lambda_{n,L}/\widetilde \lambda_n-1\big|\times 100\%$ & 23.5 & 9.3 &    2.5 &    2.0 &    0.8  &    0.8  & 
       0.4 &    0.4  &    0.3 &    0.3   \\ \hline
     $\big|\widehat \nu_{n,L}/\widetilde \nu_n-1\big|\times 100\%$ & 14.8  &   4.1  &    1.1  &    0.8 &  0.4 &    0.4 & 0.2 & 0.2 & 0.1 &    0.1   \\ \hline
  \end{tabular}
}
\end{center}
\

\medskip
\caption{\label{table1} Numerical versus asymptotic approximation: 
the numbers in the first two rows are specified up to the stable decimal digit (obtained for $L=10^4$).}
\end{table}

\begin{figure}[b]
%
%
\begin{tikzpicture}[scale =0.4]

\begin{axis}[%
width=6.028in,
height=4.754in,
at={(1.011in,0.642in)},
scale only axis,
xmin=0,
xmax=30,
xlabel={n},
xmajorgrids,
ymin=0,
ymax=0.25,
ymajorgrids,
axis background/.style={fill=white}
]
\addplot [color=black,solid,mark=*,mark options={solid},forget plot]
  table[row sep=crcr]{%
1	0.221114291839364\\
2	0.190746864732153\\
3	0.0797457345057424\\
4	0.0869280760622235\\
5	0.0455831117409247\\
6	0.0555645800500386\\
7	0.0314557349172553\\
8	0.0404239139007672\\
9	0.0236687538112754\\
10	0.0314421690752447\\
11	0.0186753845132586\\
12	0.0254456562093708\\
13	0.0151566030693004\\
14	0.0211199015896497\\
15	0.0125104504146094\\
16	0.017822882942589\\
17	0.0104232348486946\\
18	0.015203978601825\\
19	0.00871546445783622\\
20	0.0130555504288736\\
21	0.00727702534075547\\
22	0.0112468208530032\\
23	0.00603661539190625\\
24	0.00969132694702068\\
25	0.00494602691719592\\
26	0.0083296165083766\\
27	0.00397149385661066\\
28	0.00711946304879518\\
29	0.0030886575587914\\
30	0.00603004613262215\\
};
\end{axis}
\end{tikzpicture}%
\input{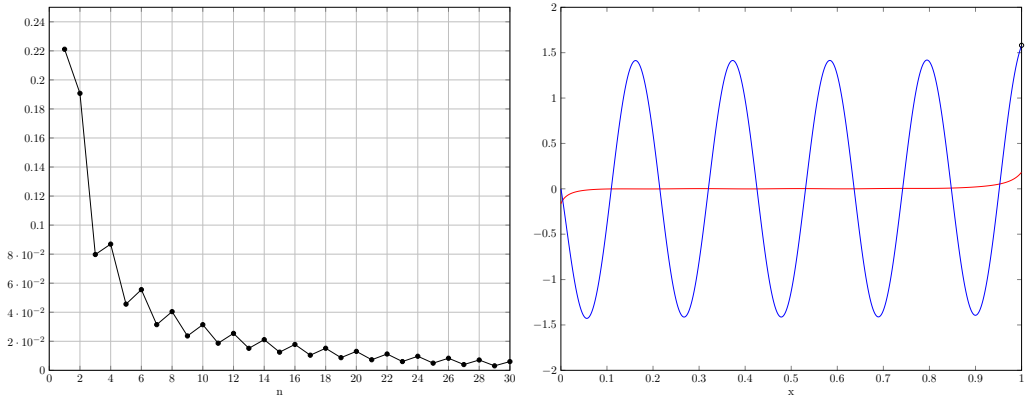}
  \caption{\label{fig-OU-1} Left: approximation error 
  $\widehat \nu_{n,L}-\widetilde \nu_n$ for $L=10^4$. 
  Right: Numerical estimate $\widehat\varphi_{n,L}$ of the eigenfunction (blue) versus the approximation error by the oscillatory term (red) for $n=10$ and $L=10^4$.}
\end{figure}

The obtained numbers are summarized in Table \ref{table1} and illustrated graphically on the left plot of Figure \ref{fig-OU-1},
which gives an idea about the actual magnitude of the $O(n^{-1})$ residual in the approximation \eqref{fBmnu}. 
The right plot of Figure \ref{fig-OU-1} illustrates the numerical estimate $\widehat \varphi_{n,L}(x)$, along with the error of
approximating the eigenfunction only by the oscillatory term from  \eqref{phinfBm}. Up to an $O(n^{-1})$ residual, 
this error coincides with the boundary layer in \eqref{phinfBm}, which pushes $\varphi_n(0)$ to zero and $\varphi_n(1)$ to 
$\sqrt{2H+1}$.

\section{Preliminaries}

\subsection{Frequently used notations} For numerical sequences $a_n$ and $b_n$, we write $a_n\propto b_n$ if 
$a_n=C b_n$ with a constant $C\ne 0$ and $a_n \sim b_n$ and $a_n \simeq b_n$ if 
$a_n \propto b_n (1+o(1))$ and $a_n =  b_n (1+o(1))$ respectively as $n\to\infty$. 
Similarly, $f(x)\propto g(x)$ stands for the equality $f(x)=C g(x)$ with a constant $C\ne 0$, etc.

Our main reference for tools from complex analysis is the text \cite{Gahov}, where the particular form of the 
Riemann boundary value problem used below is detailed in \S 43. Another reference on the subject is the classic book \cite{M46}. 
Unless stated otherwise, standard principle branches of the multivalued complex functions will be used.
We will frequently work with functions, sectionally holomorphic on the complex plane cut along the real line.
For such a function $\Psi$, we denote by $\Psi^+(t)$ and $\Psi^-(t)$ the limits of $\Psi(z)$ as $z$ approaches the point $t$
on the real line from above and below respectively:
$$
\Psi^\pm (t) := \lim_{z\to t^\pm }\Psi(z).
$$

We will frequently use the Sokhotski--Plemelj formula, which asserts that the 
Cauchy-type integral 
$$
\Phi(z):= \frac 1{2\pi i}\int_0^\infty \frac{\phi(\tau)}{\tau-z}d\tau
$$
of H\"older continuous function $\phi$ on $\Real_{>0}$, possibly with integrable singularities at the origin and infinity, 
defines a function, analytic on the cut plane  $\mathbb{C}\setminus \mathbb{R}_{\ge 0}$, whose limits across the real axis satisfy 
$$
\Phi^\pm (t) = \frac 1{2\pi i} \dashint_0^\infty \frac{\phi(\tau)}{\tau-z}d\tau \pm  \frac 1 2 \phi(t), \quad t\in \Real_{>0},
$$
where the dash integral stands for the Cauchy principle value. In particular, $\Phi(z)$ is a solution of the 
Riemann boundary value problem $\Phi^+(t) -\Phi^-(t) = \phi(t)$, $t\in \Real_{>0}$, which vanishes at infinity. 
This fact is the main building block in all boundary problems to be encountered in this paper. 

\subsection{Overview of the proofs}

The proofs below follow the framework, set up in \cite{ChK}, which, in turn, is inspired by approach in
\cite{Ukai}. 
The program is explained in detail in Section 4 of \cite{ChK}, but its implementation depends 
heavily on specificities of the kernel. Complexity of the problem is largely determined  
by the {\em structural} function $\Lambda(z)$, see \eqref{phihat} below, and particularly by the number of its zeros 
and scalability with respect to $\lambda$.
The two processes considered in this paper, the integrated f.B.m. and the fractional Ornstein--Uhlenbeck process, 
differ in both aspects from the simpler situation studied in \cite{ChK} and pose entirely new challenges, 
which is the main focus of this paper on the technical side. 
Whenever possible, we will explicitly omit calculations, which can be done similarly to analogous parts in \cite{ChK}, 
providing exact link to the relevant pages therein. 

Let us now briefly describe the main ideas behind the proofs. 
The basic object is the Laplace transform 
\begin{equation}\label{Laptr}
\widehat \varphi(z) = \int_0^1 \varphi(x)e^{-zx}dx, \quad z\in \mathbb{C},
\end{equation}
where $\varphi(x)$ is a solution to \eqref{eigpr}. 
Using underlying fractional structure of the kernels under consideration, it is possible to derive an expression for $\widehat \varphi(z)$,
which contains singularities. A typical expression has the form (see e.g. \eqref{phiz} and \eqref{phizifbm2} below): 
\begin{equation}
\label{phihat}
\widehat\varphi(z)=  P(z) -Q(z) \frac {\Phi_0(z)+e^{-z}\Phi_1(-z)}{\Lambda(z)} 
\end{equation}
where $P(z)$ and $Q(z)$ are polynomials of a finite degree, 
$\Phi_0(z)$ and $\Phi_1(z)$ are certain functions and $\Lambda(z)$ is given by an explicit formula, such as e.g. \eqref{Lambda}.

Function $\Lambda(z)$ usually have a finite number of zeros $z_1(\lambda),..., z_k(\lambda)$ and a discontinuity along the real line and
$\Phi_0(z)$ and $\Phi_1(z)$ are sectionally holomorphic on the complex plane, cut along the real axis.
Since the Laplace transform is a priori an entire function, all singularities must be removable. Therefore 
the enumerator in \eqref{phihat} must compensate the discontinuity and the zeros of $\Lambda(z)$, thus imposing on 
$\Phi_0(z)$ and $\Phi_1(z)$ two structural conditions:
\begin{equation}\label{cond1}
\lim_{z\to t^+}\frac {\Phi_0(z)+e^{-z}\Phi_1(-z)}{\Lambda(z)} = \lim_{z\to t^-}\frac {\Phi_0(z)+e^{-z}\Phi_1(-z)}{\Lambda(z)}, \quad t\in \Real
\end{equation}
and 
\begin{equation}\label{cond2} 
\Phi_0\big(z_j(\lambda)\big)+e^{-z_j(\lambda)}\Phi_1\big(-z_j(\lambda)\big)=0, \quad j=1,...,k.
\end{equation}

Using various symmetries of the problem, it is possible to rewrite \eqref{cond1} as a more explicit boundary condition 
for the limits of $\Phi_0(z)$ and $\Phi_1(z)$ across the positive real semiaxis (see e.g. \eqref{Hp} below). 
Since the eigenfunction is recovered by computing inverse Laplace transform of \eqref{phihat}, 
the original eigenproblem reduces to finding sectionally holomorphic functions $\Phi_0(z)$ and $\Phi_1(z)$ 
satisfying the following conditions:

\medskip

\begin{enumerate}
\addtolength{\itemsep}{0.7\baselineskip}
\renewcommand{\theenumi}{\roman{enumi}}
\item\label{raz}  limits $\Phi^\pm_0(t)= \lim_{z\to t^\pm}\Phi_0(z)$ and $\Phi^\pm_1(t)= \lim_{z\to t^\pm}\Phi_1(z)$
comply with \eqref{cond1}  

\item\label{dva} the behaviour $\Phi_0(z)$ and $\Phi_1(z)$ at infinity matches the a priori growth estimates, 
determined by polynomials $P(z)$ and $Q(z)$

\item\label{tri} the values of $\Phi_0(z)$ and $\Phi_1(z)$ satisfy the constraints imposed by \eqref{cond2}

\end{enumerate}

\medskip

Finding sectionally holomorphic functions subject to conditions such as  \eqref{raz}-\eqref{dva} is known in complex analysis as 
the Riemann boundary value problem. Remarkably, in our case, its unique solution can be expressed in terms 
of certain integral equations on the real semiaxis (see, e.g., \eqref{qp} below). Along with \eqref{tri}
this furnishes an integro-algberaic system of equations, which gives an equivalent characterisation for the eigenvalues and 
eigenfunctions: any solution of this system corresponds to a solution of the eigenproblem and vice versa. 
Despite its seeming complexity, this system turns out to be more amenable to asymptotic analysis, which is where the 
approximations of Theorems \ref{thm-ifBm} and \ref{thm-fOU}  ultimately come  from. 

On the technical level, the case of the fractional Ornstein--Uhlenbeck process turns out to be less involved than the integrated f.B.m.
Hence we start with the proof of Theorem \ref{thm-fOU} in the next section, deferring the more complicated proof of Theorem \ref{thm-ifBm}
to Section \ref{sec-proof-ifBm}.

\section{Proof of Theorem \ref{thm-fOU}}  

\subsection{The case $H>\frac 1 2$}
It will be convenient to define $\alpha:=2-2H\in (0,1)$, so that \eqref{KstHlarge} reads   
$$
K_\beta(s,t)= \int_0^t \int_0^s e^{\beta(t-v)}e^{\beta(s-u)}c_\alpha |u-v|^{-\alpha}dudv
$$
where $c_\alpha = (1-\frac \alpha 2)(1-\alpha)$. The eigenproblem \eqref{eigpr} takes the form 
\begin{equation}\label{OUeigHlarge}
\int_0^1 \left(\int_0^x \int_0^y e^{\beta(x-u)}e^{\beta(y-v)}c_\alpha |u-v|^{-\alpha}dvdu\right) \varphi(y)dy=\lambda \varphi(x),\quad x\in [0,1].
\end{equation}

\subsubsection{The Laplace transform} 

The starting point of the proof is an expression for the Laplace transform, derived on the basis of \eqref{OUeigHlarge}:

\begin{lem}\label{lem5.1}
Let $(\lambda, \varphi)$ be a solution of \eqref{OUeigHlarge}, then the Laplace transform \eqref{Laptr} satisfies 
\begin{equation}
\label{phiz} 
\widehat\varphi(z) = \widehat \varphi(-\beta)  -\frac {z+\beta}{\Lambda(z)} 
 \Big(
 \Phi_0(z)+e^{-z}\Phi_1(-z)
\Big),
\end{equation}
where 
\begin{equation}
\label{Lambda}
\Lambda(z) = \frac {\Gamma(\alpha)\lambda}{c_\alpha}(z^2-\beta^2) + \int_0^\infty \frac{2 t^{\alpha }}{t^2-z^2} 
 dt 
\end{equation}
and functions $\Phi_0(z)$ and $\Phi_1(z)$
are sectionally holomorphic on the cut plane $\mathbb{C}\setminus \Real_{>0}$.
\end{lem}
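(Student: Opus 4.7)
The plan is to derive \eqref{phiz} by applying the Laplace transform to the eigenequation \eqref{OUeigHlarge} and organizing the result into the claimed structure, with the sectionally holomorphic functions $\Phi_0$ and $\Phi_1$ emerging naturally as Cauchy-type integrals along the positive real axis. As a first step, I would introduce the auxiliary function $\tilde\varphi(v):=\int_v^1 e^{\beta(y-v)}\varphi(y)\,dy$ and rewrite \eqref{OUeigHlarge} in the equivalent form $\lambda\varphi(x)=\int_0^x e^{\beta(x-u)}\chi(u)\,du$ with $\chi(u):=\int_0^1 c_\alpha|u-v|^{-\alpha}\tilde\varphi(v)\,dv$. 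A short calculation gives $\hat{\tilde\varphi}(z)=(z+\beta)^{-1}\big(\hat\varphi(-\beta)-\hat\varphi(z)\big)$, and taking the Laplace transform of the preceding equation yields
\begin{equation*}
\lambda(\beta-z)\hat\varphi(z)=e^{\beta-z}\hat\chi(\beta)-\hat\chi(z).
\end{equation*}

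The core computation is the evaluation of $\hat\chi(z)$. I would insert the representation $|u-v|^{-\alpha}=\Gamma(\alpha)^{-1}\int_0^\infty t^{\alpha-1}e^{-t|u-v|}\,dt$, valid since $\alpha=2-2H\in(0,1)$, split the absolute value at $u=v$ and integrate out $u$ and $v$ explicitly, obtaining the inner double integral in closed form in terms of $\hat{\tilde\varphi}(z)$, $\hat{\tilde\varphi}(t)$ and $\hat{\tilde\varphi}(-t)$. After collecting terms one obtains
\begin{equation*}
\hat\chi(z)=\frac{c_\alpha}{\Gamma(\alpha)}\left[\hat{\tilde\varphi}(z)\int_0^\infty\frac{2t^{\alpha}}{t^2-z^2}dt-\int_0^\infty\frac{t^{\alpha-1}\hat{\tilde\varphi}(t)}{t-z}dt-e^{-z}\int_0^\infty\frac{t^{\alpha-1}e^{-t}\hat{\tilde\varphi}(-t)}{t+z}dt\right].
\end{equation*}
Substituting this, together with the formula for $\hat{\tilde\varphi}(z)$, into the Laplace transformed equation and solving for $\hat\varphi(z)$, the coefficient of $\hat\varphi(z)$ collects into $c_\alpha\Lambda(z)/\Gamma(\alpha)$ after multiplying through by $z+\beta$, which yields \eqref{phiz} with
\begin{align*}
\Phi_0(z)&=\int_0^\infty\frac{t^{\alpha-1}\hat{\tilde\varphi}(t)}{t-z}\,dt+\frac{\Gamma(\alpha)\lambda\hat\varphi(-\beta)}{c_\alpha}(z-\beta),\\
\Phi_1(w)&=\int_0^\infty\frac{t^{\alpha-1}e^{-t}\hat{\tilde\varphi}(-t)}{t-w}\,dt+\frac{\Gamma(\alpha)e^{\beta}\hat\chi(\beta)}{c_\alpha}.
\end{align*}

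Each of these is the sum of an entire polynomial correction and a Cauchy-type integral against an integrable density on $\mathbb R_{>0}$ --- integrability near zero is ensured by $\alpha>0$, and at infinity either by the $O(1/t)$ decay of $\hat{\tilde\varphi}(t)$ or by the factor $e^{-t}$ --- so the Sokhotski--Plemelj theory recalled above delivers sectional holomorphy on $\mathbb C\setminus\mathbb R_{\ge 0}$ at once. The main technical hurdle is the treatment of the integral $\int_0^\infty\frac{2t^{\alpha}}{t^2-z^2}\,dt$ appearing in $\Lambda(z)$: for real $z>0$ it exists only as a Cauchy principal value and carries the jump discontinuity across the positive real axis that will later drive the boundary value formulation. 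Justifying the exchange of orders of integration in the derivation of $\hat\chi(z)$, together with the algebraic bookkeeping needed to verify that the reassembly yields precisely $c_\alpha\Lambda(z)/\Gamma(\alpha)$, are the points requiring most care.
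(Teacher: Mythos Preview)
Your proposal is correct and arrives at exactly the same $\Phi_0$, $\Phi_1$ as the paper: your $\tilde\varphi$ is the paper's $\psi$, and since $u(0,t)=\hat{\tilde\varphi}(t)$, $u(1,t)=e^{-t}\hat{\tilde\varphi}(-t)$, $\psi(0)=\hat\varphi(-\beta)$ and $-\lambda\psi'(1)=e^\beta\hat\chi(\beta)$, your Cauchy-integral and polynomial corrections match the paper's termwise.

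The organizational difference is that the paper first differentiates the eigenequation and recasts it as a generalized Sturm--Liouville problem $\int_0^1 c_\alpha|x-y|^{-\alpha}\psi(y)\,dy=\lambda(\beta^2\psi-\psi'')$ with explicit boundary conditions, then introduces $u(x,t)=\int_0^1 e^{-t|x-y|}\psi(y)\,dy$ and computes $\hat u(z,t)$ via the ODE $u''=t^2u-2t\psi$ with its endpoint relations. You bypass both the differentiation and the ODE trick by taking the Laplace transform of the integral equation directly and evaluating $\hat\chi(z)$ by explicit $u$-$v$ integration. Your route is marginally more economical for this particular lemma; the paper's formulation has the advantage of surfacing the boundary conditions $\psi(1)=0$, $\psi'(0)+\beta\psi(0)=0$ explicitly, which feeds more transparently into the a~priori growth estimates \eqref{grzero}--\eqref{grinf} used immediately afterward.
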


\begin{proof}

Taking derivative of both sides of \eqref{OUeigHlarge} we get 
\begin{equation}\label{eigpr_fOU}
\int_0^1 \left( \int_0^{y}   e^{ -\beta v} c_\alpha |  x- v|^{-\alpha} dv\right) e^{\beta   y } \varphi(y)dy
+
\beta\lambda \varphi(x) = \lambda \varphi'(x), \quad x\in [0,1].
\end{equation}
Integrating by parts, the first term can be rewritten as 
$$
\int_0^1 \left( \int_0^{y}   e^{ -\beta v} c_\alpha |  x- v|^{-\alpha} dv\right) e^{\beta   y } \varphi(y)dy=
\int_0^1      c_\alpha |  x- y|^{-\alpha}    \psi(y) dy,
$$
where we defined 
\begin{equation}\label{psidef}
\psi(x) := e^{-\beta x}\displaystyle \int_x^1 e^{\beta r}\varphi(r)dr.
\end{equation}
By definition $\psi(x)$ satisfies 
\begin{equation}
\label{psiphi}
\psi'(x)    +\beta \psi(x)   =- \varphi(x),
\end{equation}
and therefore \eqref{eigpr_fOU} is equivalent to the generalized eigenproblem:  
\begin{equation}\label{geneig}
\begin{aligned}
\int_0^1   &   c_\alpha |  x- y|^{-\alpha}    \psi(y) dy
 = \lambda \Big(\beta^2 \psi(x)-\psi''(x) \Big) 
 , \quad x\in [0,1] \\
&
\psi(1) =  0, \; 
\psi'(0)    +\beta \psi(0)  =0
\end{aligned}
\end{equation}
Plugging the identity  
\begin{equation}\label{Gfla}
|x-y|^{-\alpha} =\frac 1{\Gamma(\alpha)} \int_0^\infty t^{\alpha-1}e^{-t|x-y|}dt,\quad \alpha\in (0,1),
\end{equation}
into \eqref{geneig} gives
\begin{equation}
\label{one}
  \int_0^\infty t^{\alpha-1}   u(x,t) dt
 =\frac {\Gamma(\alpha)\lambda}{c_\alpha}   \Big(\beta^2 \psi(x)-\psi''(x) \Big),
\end{equation}
where we defined 
\begin{equation}
\label{udef}
u(x,t) := \int_0^1 \psi(y)  e^{-t|x-y|}dy. 
\end{equation}
On the other hand, integrating twice by parts gives
\begin{equation}\label{psitagtag}
\begin{aligned}
\widehat \psi''(z) = & \int_0^1 \psi''(x) e^{-zx}dx = \psi'(1)e^{-z}-\psi'(0) +  
z\psi(1)e^{-z}-z\psi(0) + z^2 \widehat \psi(z) = \\
& 
\psi'(1)e^{-z}+(\beta  -z)\psi(0) + z^2 \widehat \psi(z),
\end{aligned}
\end{equation}
where the last equality holds due to the boundary conditions in \eqref{geneig}.
Taking the Laplace transform of \eqref{one} and plugging \eqref{psitagtag} we get 
\begin{equation}
\label{rel1}
 \int_0^\infty t^{\alpha-1}  \widehat u(z,t) dt
 =\frac {\Gamma(\alpha)\lambda}{c_\alpha}   \Big((\beta^2 -z^2)\widehat\psi(z)
 -\psi'(1)e^{-z}-(\beta  -z)\psi(0)  
  \Big).
\end{equation}

A different expression for $\widehat u(z,t)$ can be obtained, using definition \eqref{udef}: taking two derivatives gives
\begin{equation}\label{u2tag}
u''(x,t) =  t^2  u(x,t)-2t  \psi(x),
\end{equation}
subject to the boundary conditions
\begin{equation}\label{utag01}
\begin{aligned}
& u'(0,t)  = \phantom{+} t u(0,t)\\
& u'(1,t)  = -t u(1,t). 
\end{aligned}
\end{equation}
Integrating twice by parts gives
\begin{equation}\label{utagtag}
\begin{aligned}
\widehat u''(z,t) = & \int_0^1 u''(x,t) e^{-zx}dx = u'(1,t)e^{-z}-u'(0,t)+z\widehat u'(z,t) =\\
&
  (z-t) e^{-z} u(1,t) -(z+t) u(0,t) +z^2\widehat u(z,t),
\end{aligned}
\end{equation}
where we used \eqref{utag01}. 
Taking the Laplace transform of \eqref{u2tag} and plugging \eqref{utagtag} we get 
\begin{equation}
\label{rel2}
\widehat u(z,t)  = \frac 1{z-t}  u(0,t)
-\frac 1{z+t}   u(1,t)e^{-z}  
- \frac{2t}{z^2-t^2} \widehat \psi(z).
\end{equation}
Now combining \eqref{rel1} and \eqref{rel2} and rearranging we obtain 
$$
 \widehat \psi(z) 
 =\frac 1{\Lambda(z)} 
 \Big(
 \Phi_0(z)+e^{-z}\Phi_1(-z) 
\Big)
$$
where $\Lambda(z)$ is defined in \eqref{Lambda}  
and 
\begin{equation}\label{Phi0Phi1}
\begin{aligned}
& \Phi_0(z):= -\frac {\Gamma(\alpha)\lambda}{c_\alpha}(\beta  -z)\psi(0)+\int_0^\infty \frac {t^{\alpha-1}}{t-z}  u(0,t)dt\\
&
\Phi_1(z) := -\frac {\Gamma(\alpha)\lambda}{c_\alpha}\psi'(1) +  \int_0^\infty \frac {t^{\alpha-1}}{t-z}   u(1,t)  dt 
\end{aligned}
\end{equation}
Since $\psi(1)=0$, we have
$$
\widehat \psi'(z) =   - \psi(0) + z \widehat \psi(z)
$$
and, on the other hand, by \eqref{psiphi}
$$
\widehat\psi'(z)    +\beta\widehat\psi(z)   =- \widehat\varphi(z).
$$
Combining these two expressions gives 
$$
 \widehat\varphi(z)=\psi(0)- (z +\beta) \widehat\psi(z)   
$$
and, in turn, the expression claimed in \eqref{phiz}.
\end{proof}

\medskip 

The following lemma details the structure of $\Lambda(z)$ and summarizes some of its properties, 
to be used later in the proofs:

\medskip 

\begin{lem}\label{lem4.2} \

\medskip
\noindent 
a) $\Lambda(z)$  admits the expression  
\begin{equation}\label{Lfla}
\Lambda (z) =
\frac {\Gamma(\alpha)\lambda}{c_\alpha}(z^2-\beta^2)
+
 z^{\alpha-1} 
\frac{\pi   }{ \cos \frac{\pi } 2\alpha}
\begin{cases}
e^{\frac{1-\alpha}{2}\pi i} & \arg (z) \in (0,\pi) \\
e^{-\frac{1-\alpha}{2}\pi i } &   \arg(z)\in (-\pi, 0)
\end{cases}
\end{equation}
and has two zeros at $\pm z_0 = \pm i \nu$ with $\nu>0$ satisfying the equation  
\begin{equation}\label{lambdanu}
\lambda 
=
\frac {c_\alpha}{\Gamma(\alpha)} \frac {\pi   } { \cos \frac{\pi } 2\alpha}
\frac{\nu^{\alpha-1}}{\beta^2 +\nu^2}.
\end{equation}

\medskip
\noindent 
b) The limits $\Lambda^\pm(t) = \lim_{z\to t^\pm}\Lambda(z)$ across the real line are given by 
$$
\Lambda^\pm (t) =
 \frac {\Gamma(\alpha)\lambda}{c_\alpha}(t^2-\beta^2 )
+
|t|^{\alpha-1} 
\frac{\pi   }{ \cos \frac{\pi } 2\alpha}
\begin{cases}
e^{\pm \frac{1-\alpha}{2}\pi i} & \quad t>0 \\
e^{\mp\frac{1-\alpha}{2}\pi i } & \quad t<0
\end{cases}
$$
and satisfy the symmetries 
\begin{align}
 \Lambda^+(t) & =\overline{\Lambda^-(t)} \label{conjp}\\
 \frac{\Lambda^+(t)}{\Lambda^-(t)} & =\frac{\Lambda^-(-t)}{\Lambda^+(-t)}   \label{prop}   \\
 \big|\Lambda^+(t)\big| &  =\big|\Lambda^+(-t)\big|   \label{absL}
\end{align}

\medskip
\noindent 
c) The argument $\theta(t):=\arg\{\Lambda^+(t)\}\in (-\pi, \pi]$ is an odd function, $\theta(-t)=-\theta(t)$,
\begin{equation}
\label{thetanu}
\theta(t) = \arctan \frac{
\sin \frac{1-\alpha}{2}\pi 
}
{
\frac{(t/\nu)^2-(\beta/\nu)^2}{1+(\beta/\nu)^2}(t/\nu)^{1-\alpha} 
+
\cos \frac{1-\alpha}{2}\pi  
},\qquad t>0
\end{equation}
continuous on $(0,\infty)$ with $\theta(0+) := \frac{1-\alpha}{2}\pi>0$ and $\theta(\infty) :=\lim_{t\to\infty}\theta(t)=0$.
For all $\nu$ large enough, the scaled function $\theta(u;\nu):=\theta(u\nu)$ satisfies the bound 
\begin{equation}\label{thetabeta}
\Big|\theta(u;\nu) -\theta_0(u)\Big|\le g(u)(\beta/\nu)^2
\end{equation} 
where $g(u)$ does not depend on $\nu$, is continuous on $[0,\infty)$, $g(u)\sim u^{1-\alpha}$ as $u\to 0$ and 
$g(u)\sim u^{\alpha-3}$ as $u\to\infty$ and 
\begin{equation}
\label{theta0}
\theta_0(u) := \lim_{\nu\to\infty}\theta(u\nu) = 
\arctan \frac{
\sin \frac{1-\alpha}{2}\pi 
}
{
u^{3-\alpha} 
+
\cos \frac{1-\alpha}{2}\pi
}.
\end{equation}
For any fixed $\beta\in \Real$, 
\begin{equation}\label{balphadef}
b_\alpha(\beta,\nu) :=\frac{1}{\pi} \int_0^\infty \theta(u;\nu) du \xrightarrow[\nu\to\infty]{} 
\frac{1}{\pi} \int_0^\infty \theta_0(u) du =
\frac{\sin{(\frac{\pi}{3-\alpha}\frac{1-\alpha}{2})}}{\sin{\frac{\pi}{3-\alpha}}}=:b_\alpha,
\end{equation}
and  
\begin{equation}\label{bnu}
\big|b_\alpha(\beta,\nu)-b_\alpha\big|\le C (\beta/\nu)^2,
\end{equation}
with a constant $C$. 

\end{lem}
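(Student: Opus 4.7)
The plan is to first establish \eqref{Lfla} by evaluating the integral $I(z):=\int_0^\infty 2t^\alpha/(t^2-z^2)\,dt$ in closed form. Substituting $s=t^2$ reduces it to the Mellin-type integral $\int_0^\infty s^{(\alpha-1)/2}/(s-z^2)\,ds$, which converges absolutely for $\alpha\in(0,1)$ and evaluates via the classical identity $\int_0^\infty s^{\sigma-1}/(s+a)\,ds=\pi a^{\sigma-1}/\sin(\pi\sigma)$, extended by analytic continuation in $a$ to the cut plane $\mathbb{C}\setminus[0,\infty)$. With $\sigma=(1+\alpha)/2$ and $a=-z^2$, the expression \eqref{Lfla} follows, the two cases corresponding to the branch of $(-z^2)^{(\alpha-1)/2}$ according as $\arg z\in(0,\pi)$ or $\arg z\in(-\pi,0)$. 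A sanity check at $z=iy$, where $I(iy)=y^{\alpha-1}\pi/\cos(\pi\alpha/2)$ can be computed directly via $\int_0^\infty s^\alpha/(s^2+1)\,ds=(\pi/2)/\cos(\pi\alpha/2)$, confirms the branch choice. Substituting $z=i\nu$ into \eqref{Lfla} and setting $\Lambda(i\nu)=0$ immediately yields \eqref{lambdanu}. Uniqueness of $\nu>0$ follows because the map $\nu\mapsto\nu^{\alpha-1}/(\nu^2+\beta^2)$ is strictly decreasing on $(0,\infty)$ (a direct derivative calculation, using $\alpha<1$), and the absence of other zeros in the cut plane follows by a standard application of the argument principle on a large semicircular contour.

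For part b), the explicit formulas for $\Lambda^\pm(t)$ are read off \eqref{Lfla} by tracking the limit of $\arg z\to 0^\pm$ for $t>0$ and $\arg z\to\pm\pi$ for $t<0$, with $z^{\alpha-1}$ contributing an extra factor $e^{\pm i\pi(\alpha-1)}$ in the latter case. Symmetry \eqref{conjp} is immediate since the two branches of \eqref{Lfla} are complex conjugates. For \eqref{prop} and \eqref{absL}, the key identity is $\Lambda^+(-t)=\Lambda^-(t)=\overline{\Lambda^+(t)}$, verified by direct inspection: the algebraic term $\frac{\Gamma(\alpha)\lambda}{c_\alpha}(t^2-\beta^2)$ and the amplitude $|t|^{\alpha-1}\pi/\cos(\pi\alpha/2)$ are even in $t$, while the phase $e^{\pm i(1-\alpha)\pi/2}$ flips sign when $t$ crosses zero, exactly compensating the sign change required to match the opposite boundary limit.

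For part c), formula \eqref{thetanu} follows by separating real and imaginary parts of $\Lambda^+(t)$ for $t>0$ and using \eqref{lambdanu} to rewrite $\Gamma(\alpha)\lambda/c_\alpha=\pi\nu^{\alpha-1}/\bigl(\cos(\pi\alpha/2)(\nu^2+\beta^2)\bigr)$, producing the stated ratio after division by the common amplitude. Oddness of $\theta$ follows from $\Lambda^+(-t)=\overline{\Lambda^+(t)}$. The limits $\theta(0^+)=(1-\alpha)\pi/2$ and $\theta(\infty)=0$ are read directly from \eqref{thetanu}, and continuity on $(0,\infty)$ is clear since the numerator $\sin((1-\alpha)\pi/2)$ is strictly positive and the real part stays bounded away from the critical sign change where the arctangent would jump. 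For the scaling bound \eqref{thetabeta}, setting $t=u\nu$ in \eqref{thetanu} exhibits the dependence on $\nu$ only through the small parameter $(\beta/\nu)^2$; a first-order Taylor expansion of the arctangent in this parameter yields \eqref{thetabeta} with an explicit majorant $g(u)$, whose behaviour $g(u)\sim u^{1-\alpha}$ near $0$ and $g(u)\sim u^{\alpha-3}$ near $\infty$ is inherited from the endpoint asymptotics of $\theta_0$ itself.

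The hard step is the explicit evaluation of $\int_0^\infty\theta_0(u)\,du$ in \eqref{balphadef}. Observing that $\theta_0(u)=\Im\log(u^{3-\alpha}+e^{i\phi})$ with $\phi=(1-\alpha)\pi/2$ (no branch issue since $\cos\phi>0$ keeps the argument a principal-branch value), I would integrate by parts, using $u\theta_0(u)\to 0$ at both endpoints (guaranteed by $3-\alpha>2$), to obtain $\int_0^\infty\theta_0(u)\,du=(3-\alpha)\int_0^\infty\Im\bigl[e^{i\phi}/(u^{3-\alpha}+e^{i\phi})\bigr]\,du$. The remaining integral is evaluated via the standard identity $\int_0^\infty du/(u^a+a_0)=(\pi/a)a_0^{1/a-1}/\sin(\pi/a)$, extended analytically to $a_0\in\mathbb{C}\setminus(-\infty,0]$, which produces $\pi\sin(\phi/(3-\alpha))/\sin(\pi/(3-\alpha))$ and hence the stated $b_\alpha$. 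Finally, \eqref{bnu} follows by integrating the pointwise bound \eqref{thetabeta} over $(0,\infty)$; the integrability of the majorant $g$ is precisely ensured by its endpoint behaviour.
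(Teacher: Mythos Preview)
Your proposal is essentially correct and covers all parts of the lemma; the overall strategy matches the paper's, but two steps differ in approach.

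For the location of zeros in part a), the paper does not invoke the argument principle. Instead it parametrizes $z=\nu e^{i\omega}$ with $\omega\in(0,\pi)$, separates imaginary parts of $\Lambda(z)=0$, and observes that $\kappa\nu^2\sin 2\omega$ and $\nu^{\alpha-1}\sin\bigl((\omega-\tfrac\pi2)(\alpha-1)\bigr)$ share the same sign on each of $(0,\tfrac\pi2)$ and $(\tfrac\pi2,\pi)$, forcing $\omega=\tfrac\pi2$. This is entirely elementary and avoids the bookkeeping your argument-principle route would require (the branch point at $0$, the behaviour of $\Lambda^\pm$ along the real axis, and the small indentation at the origin). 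Your approach is not wrong in principle, but ``standard application of the argument principle'' undersells the care needed here; if you keep that route you should spell out the contour and the change-of-argument count explicitly.

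For the evaluation of $b_\alpha$ in part c), you actually go further than the paper, which simply cites \cite{ChK} for the value. Your integration-by-parts identity $\int_0^\infty\theta_0(u)\,du=(3-\alpha)\,\Im\int_0^\infty e^{i\phi}/(u^{3-\alpha}+e^{i\phi})\,du$ followed by the analytic continuation of $\int_0^\infty du/(u^a+a_0)=\tfrac{\pi}{a}a_0^{1/a-1}/\sin(\pi/a)$ to $a_0=e^{i\phi}$ is a clean self-contained derivation. The remaining items---the Mellin evaluation of the integral in \eqref{Lambda}, the boundary limits and symmetries in b), the arctan formula \eqref{thetanu}, and the Taylor bound \eqref{thetabeta}---are handled the same way in both your write-up and the paper (which dismisses b) and most of c) as ``direct calculations'' and ``direct inspection'').
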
 

\begin{proof}\

\medskip
\noindent
a) 
Standard contour integration gives the identity  
\begin{equation}\label{intfla}
\int_0^{\infty} \frac{t^\alpha}{t^2-z^2}dt  = z^{\alpha-1}\frac 1 2
\frac{\pi   }{ \cos \frac{\pi } 2\alpha}
\begin{cases}
e^{\frac{1-\alpha}{2}\pi i} & \arg (z) \in (0,\pi) \\
e^{-\frac{1-\alpha}{2}\pi i } &   \arg(z)\in (-\pi, 0)
\end{cases}
\end{equation}
and in turn the expression in \eqref{Lfla}. 
To find the roots of $\Lambda(z)$ in the upper half plane,  
let $z= \nu e^{i\omega}$ with $\nu>0$ and $\omega\in (0,\pi)$ so that the 
equation $\Lambda(z)=0$ becomes 
$$
\kappa (\nu^2e^{2i\omega}-\beta^2)
+ 
 \nu^{\alpha-1}e^{i(\omega-\frac \pi 2)(\alpha-1)}  =0,
$$
where we set 
$
\kappa := \displaystyle\frac {\Gamma(\alpha)\lambda}{c_\alpha}\frac{ \cos \frac{\pi } 2\alpha}{\pi   }
$
for brevity. 
This implies 
$$
\kappa  \nu^2\sin 2\omega
+ 
 \nu^{\alpha-1}\sin (\omega-\tfrac \pi 2)(\alpha-1)   =0.
$$
For both $\omega\in (0,\frac \pi 2)$ and $\omega\in (\frac \pi 2 ,\pi)$, the sines are either positive or negative simultaneously and 
hence the equality cannot hold. Therefore the only root in the upper half plane is $i\nu$ with $\nu$ solving the equation \eqref{lambdanu}.
By definition \eqref{Lambda}, all zeros of $\Lambda(z)$ must be conjugates and hence 
the only other root in the lower half plane is $-i\nu$. 

\medskip 
\noindent 
b) All the claims are checked by direct calculations. 

\medskip 
\noindent
c) 
The function $f(u):= (u^2 - (\beta/\nu)^2)u^{1-\alpha}$, $u\in \Real_+$ vanishes at $u=0$ and $u=\beta/\nu$ and has a unique minimum    
$
\min_{u\ge 0}f(u)  = -(\beta/\nu)^{3-\alpha}\frac{2}{3-\alpha}\left(\frac{1-\alpha}{3-\alpha}\right)^{\frac{1-\alpha}{2}}.
$
Therefore for a fixed $\beta$ and all $\nu$ large enough the denominator in \eqref{thetanu} is bounded away from zero uniformly in $u$. 
The rest of the claims readily follow by direct inspection (the value of $\beta_\alpha$ is computed in \cite{ChK}).  

\end{proof}

\subsubsection{Removal of singularities}\label{sec512}

Since the Laplace transform is a priori an entire function, both poles and the discontinuity on the real line in \eqref{phiz} 
must be removable. Therefore the functions $\Phi_0(z)$ and $\Phi_1(z)$ must satisfy  
\begin{equation}
\label{algc}
 \Phi_0(\pm z_0)+e^{\mp z_0}\Phi_1(\mp z_0) = 0
\end{equation}
and 
$$
\lim_{z\to t^+}\frac 1 {\Lambda(z)} \big(e^{-z}\Phi_1(-z)+\Phi_0(z)\big) =
\lim_{z\to t^-}\frac 1 {\Lambda(z)} \big(e^{-z}\Phi_1(-z)+\Phi_0(z)\big).
$$
The latter condition gives  
\begin{align*}
&
\frac {1}{\Lambda^+(t)} 
 \Big(
 \Phi_0^+(t)+e^{-t}\Phi_1(-t)
\Big)=
\frac {1}{\Lambda^-(t)} 
 \Big(
 \Phi_0^-(t)+e^{-t}\Phi_1(-t)
\Big), \quad t>0 \\
&
\frac {1}{\Lambda^+(t)} 
 \Big(
 \Phi_0(t)+e^{-t}\Phi_1^-(-t)
\Big)=
\frac {1}{\Lambda^-(t)} 
 \Big(
 \Phi_0(t)+e^{-t}\Phi_1^+(-t)
\Big), \quad t<0
\end{align*}
which, in view of \eqref{prop}, can be rewritten as
\begin{equation}\label{Hp1}
\begin{aligned}
& 
\Phi_0^+(t) - \frac{\Lambda^+(t)}{\Lambda^-(t)}\Phi_0^-(t) =  e^{-t} \Phi_1(-t) \left(\frac{\Lambda^+(t)}{\Lambda^-(t)}-1\right) \\
&
\Phi_1^+(t) - \frac{\Lambda^+(t)}{\Lambda^-(t)}\Phi_1^-(t) =   e^{-t} \Phi_0(-t) \left(\frac{\Lambda^+(t)}{\Lambda^-(t)}-1\right)
\end{aligned}
\end{equation}
Further, since  $\Lambda^+(t)$ and $\Lambda^-(t)$ are complex conjugates,    
$
\Lambda^+(t)/\Lambda^-(t)=e^{2i\theta(t)}
$
and
$$
\frac{\Lambda^+(t)}{\Lambda^-(t)}-1=e^{2i\theta(t)}-1 = 2i e^{i\theta(t)}\sin\theta(t).
$$
Therefore \eqref{Hp1} takes the  form
\begin{equation}
\label{Hp}
\begin{aligned}
&
\Phi_0^+(t) - e^{2i\theta(t)}\Phi_0^-(t) = 2i  e^{-t} e^{i\theta(t)}\sin\theta(t) \Phi_1(-t)
\\
&
\Phi_1^+(t) - e^{2i\theta(t)}\Phi_1^-(t) = 2i e^{-t}  e^{i\theta(t)}\sin\theta(t) \Phi_0(-t)
\end{aligned}\qquad t>0.
\end{equation}

By definition \eqref{udef} both $tu(0,t)$ and $tu(1,t)$ are bounded and hence the functions in \eqref{Phi0Phi1}
satisfy the following a priori estimates:
\begin{equation}\label{grzero}
 \Phi_1(z)\sim z^{\alpha-1} \quad \text{and}\quad \Phi_0(z)  \sim  z^{\alpha-1} \qquad \text{as}\ z\to 0,
\end{equation}
and 
\begin{equation}
\label{grinf}
\begin{aligned}
\Phi_0(z)  &= 2c_2(\beta-z) + O(z^{-1}) \\
\Phi_1(z)  & =  2c_1 + O(z^{-1})
\end{aligned}
\qquad \text{as\ } z\to \infty
\end{equation}
where we defined 
\begin{equation}
\label{c1c2}
c_1   = -\frac 1 2\frac{\Gamma(\alpha)\lambda}{c_\alpha}\psi'(1)  \quad \text{and}\quad 
c_2  = -\frac 1 2\frac{\Gamma(\alpha)\lambda}{c_\alpha}\psi(0).
\end{equation}

\subsubsection{An equivalent formulation of the eigenproblem}
The Laplace transform of any $\varphi$, which satisfies \eqref{OUeigHlarge}, is given by the formula 
\eqref{phiz}, in which sectionally holomorphic functions $\Phi_0(z)$ and $\Phi_1(z)$ have the growth rates 
\eqref{grinf} and \eqref{grzero} and satisfy the boundary conditions \eqref{Hp} and the constraint \eqref{algc}. 
In this subsection we will establish one-to-one correspondence between such functions and square integrable solutions 
of a certain system of integral equations. This is done by the solution technique of the Riemann boundary
value problems.

Let us start with the homogeneous Riemann problem of finding a function $X(z)$, sectionally holomorphic on the cut plane 
$\mathbb{C}\setminus \Real_{>0}$ and satisfying the boudnary condition
\begin{equation}
\label{Rbvphom}
X^+(t) - e^{2i\theta(t)}X^-(t) =0, \quad t\in \Real_{>0}.
\end{equation}
All such functions can be found by the Sokhotski--Plemelj formula 
\begin{equation}
\label{Xz}
X(z)= z^k X_c(z)=z^k \exp \left(\frac 1 \pi \int_0^\infty\frac{\theta(t)}{t-z}dt\right), \quad z\in \mathbb{C}\setminus \Real_{>0},
\end{equation}
where $k$ is an arbitrary integer to be fixed later. Canonical part $X_c(z)$ satisfies
\begin{equation}\label{Xczinf}
X_c(z) = 1 - z^{-1} \nu b_\alpha(\beta, \nu) + O(z^{-2}) \quad \text{as}\ z\to\infty,
\end{equation}
where $b_\alpha(\beta, \nu)$ is defined in \eqref{balphadef}, and
\begin{equation}
\label{Xcz0}
X_c(z) \sim z^{ \frac{\alpha-1}{2}}  \quad \text{as}\ z\to 0.
\end{equation}

Now define   
\begin{equation}
\label{SDdef}
\begin{aligned}
& S(z):= \frac{\Phi_0(z)+\Phi_1(z)}{2X(z)} \\
& D(z):= \frac{\Phi_0(z)-\Phi_1(z)}{2X(z)}
\end{aligned}
\end{equation}
Combining \eqref{Hp} and \eqref{Rbvphom} shows that these function satisfy the boundary conditions
\begin{equation}\label{bcndSD}
\begin{aligned}
S^+(t)-S^-(t)  &= \phantom{+}2i h(t) e^{-t} S(-t) \\
D^+(t)-D^-(t)  &=  - 2i h(t)e^{-t} D(-t)
\end{aligned}\qquad t>0,
\end{equation}
where  
\begin{equation}\label{hdefine}
h(t):=e^{i\theta(t)}\sin \theta(t)\frac{X(-t)}{X^+(t)}.
\end{equation}
As in \cite{ChK} (see eq. (5.37)) this function satisfies the formula  
\begin{equation}\label{ht}
h(t) = \exp \left(-\frac 1 \pi \int_0^\infty\theta'(s) \log\left| \frac{t+s}{t-s}\right|ds\right)\sin \theta(t)
\end{equation}
and therefore is real valued, Holder continuous on $\Real_{>0}$ with 
$h(0):=\sin \theta(0+)=\sin \frac{1-\alpha}{2} \pi$.

\medskip

Applying the Sokhotski--Plemelj formula to \eqref{bcndSD}, we obtain the following representation
\begin{equation}
\label{SD}
\begin{aligned}
& 
S(z) = \phantom{+}\frac 1 \pi \int_0^\infty \frac{h(t)e^{-t}}{t-z}S(-t) dt + P_S(z) \\
&
D(z) = -\frac 1 \pi \int_0^\infty \frac{h(t)e^{-t}}{t-z}D(-t)dt + P_D(z)
\end{aligned}
\end{equation}
where $P_S(z)$ and $P_D(z)$ are polynomials to be chosen to match the a priori growth of $S(z)$ and $D(z)$ as $z\to\infty$. 
Note that \eqref{SD} requires that $S(-t)$ and $D(-t)$ are integrable near the origin. In view of the a priori estimates 
\eqref{grzero} and \eqref{Xcz0}, the choice of $k$ in \eqref{Xz} is limited by condition $k < \frac {\alpha +1}2$.
In fact, in what follows we will also need $S(-t)$ and $D(-t)$ to be {\em square} integrabile, which implies further restriction 
$k < \frac \alpha 2$. A convenient choice is $k =0$, which we will fix hereafter, that is, set $X(z):=X_c(z)$. 

Since for any numbers $a$, $b$ and $c$
$$
\frac {az+b+O(z^{-1})}{1-cz^{-1}+ O(z^{-2})}=a(z+c) + b + O(z^{-1})\quad \text{as} \ z\to\infty
$$
the a priori estimates \eqref{grinf} and \eqref{Xczinf} imply 
\begin{align*}
S(z) & =  c_2 \big(-z+\beta -\nu b_\alpha(\beta,\nu)\big) + c_1 + O(z^{-1})\\
D(z) & =  c_2 \big(-z+\beta -\nu b_\alpha(\beta,\nu)\big) - c_1 + O(z^{-1})
\end{align*}
This determines the choice of the polynomials in \eqref{SD} 
\begin{align*}
P_S(z) &:=  c_2(-z+\beta-\nu b_\alpha(\beta,\nu))+c_1\\
P_D(z) &:=  c_2(-z+\beta-\nu b_\alpha(\beta,\nu))-c_1, 
\end{align*}
where constants $c_1$ and $c_2$ are defined in \eqref{c1c2}. 
If we now set $z:=-t$ with $t>0$ in \eqref{SD}, the integral equations for $S(-t)$ and $D(-t)$ are obtained:
$$
\begin{aligned}
& 
S(-t) = \phantom{+}\frac 1 \pi \int_0^\infty \frac{h(s)e^{-s}}{s+t}S(-s) ds + c_2\big(t+\beta-\nu b_\alpha(\beta,\nu)\big)+c_1\\
&
D(-t) = -\frac 1 \pi \int_0^\infty \frac{h(s)e^{-s}}{s+t}D(-s)ds + c_2\big(t+\beta-\nu b_\alpha(\beta,\nu)\big)-c_1
\end{aligned}
$$

Consider now the integral equations 
\begin{equation}\label{qp}
p^\pm_j (t)  = \pm \frac 1 \pi \int_0^\infty \frac{h_\beta(s;\nu)e^{-\nu s}}{s+t} p^\pm_j(s)ds+t^j, \quad j\in \{0,1\}
\end{equation}
where $h_\beta(u;\nu):=h(u \nu)$, $u>0$. Below we will argue that, for all sufficiently large $\nu$, 
these equations have unique solutions, such that $p^\pm_0 (t)-1$ and $p^\pm_1(t)-t$ belong to $L^2(0,\infty)$. 
Let us extend the domain of $p^\pm_j(z)$ to the cut plane  by replacing $t$ with $z\in \mathbb{C}\setminus \Real_{\ge 0}$ 
in the right hand side of \eqref{qp}.

Since $S(-t)$ and $D(-t)$ are a priori square integrable near the origin,
by linearity 
\begin{align*}
& 
S(z\nu ) =   
  c_2 \nu p^+_1(-z) + \Big(c_2\big(\beta-\nu b_\alpha(\beta,\nu)\big)+c_1\Big)p^+_0(-z)\\
&
D(z\nu ) =  
 c_2 \nu p^-_1(-z) + \Big(c_2\big(\beta-\nu b_\alpha(\beta,\nu)\big)-c_1\Big)p^-_0(-z)
\end{align*}
Consequently, if we let 
\begin{align*}
&
a_\pm (z):= p^+_0(z)\pm p^-_0(z) \\
&
b_\pm (z):= p^+_1(z)\pm p^-_1(z)
\end{align*} 
by definition \eqref{SDdef}
\begin{equation}\label{Phiba}
\begin{aligned}
\Phi_0(z\nu) & =
c_2 \nu X(z\nu)\Big(b_+(-z) +   \big(  \beta/\nu -  b_\alpha(\beta,\nu)\big)a_+(-z)\Big)  +c_1X(z\nu) a_-(-z)
\\
\Phi_1(z\nu) & = 
c_2 \nu X(z\nu)\Big(b_-(-z) +   \big(\beta/\nu-  b_\alpha(\beta,\nu)\big)a_-(-z)\Big)+c_1X(z\nu) a_+(-z)
\end{aligned}
\end{equation} 
Setting $X_\beta(z;\nu):=X(z\nu)$ and plugging \eqref{algc}, we therefore obtain 
\begin{equation}\label{linsys}
c_2 \nu \xi
 + 
c_1 \eta  =0
\end{equation} 
where 
\begin{equation}\label{xieta}
\begin{aligned}
\xi :=\, & 
e^{i\nu/2}
X_\beta(i;\nu)\Big(b_+(-i) +   \big(  \beta/\nu -  b_\alpha(\beta,\nu)\big)a_+(-i)\Big) + \\
 &  e^{-i\nu/2}
  X_\beta(-i;\nu)\Big(b_-(i) +   \big(\beta/\nu-  b_\alpha(\beta,\nu)\big)a_-(i)\Big)
\\  
\eta :=\,  &
e^{i\nu/2} X_\beta(i;\nu) a_-(-i)+e^{-i\nu/2}  X_\beta(-i;\nu) a_+(i)
\end{aligned}
\end{equation}
Since $c_1$ and $c_2$ are real, nontrivial solutions to \eqref{linsys} are possible if and only if
\begin{equation}
\label{Im}
\Im\{\xi \overline{\eta}\}=0,
\end{equation}
in which case $c_1 = - c_2 \nu \xi /\eta$. Plugging this into \eqref{Phiba} we get
\begin{equation}
\label{Phinorm}
\begin{aligned}
\Phi_0(z) /c_2 \nu & =
 X(z)\Big(b_+(-z/\nu) +   \big(  \beta/\nu -  b_\alpha(\beta,\nu)\big)a_+(-z/\nu)\Big)  -   \frac \xi \eta X(z) a_-(-z/\nu)
\\
\Phi_1(z)/c_2 \nu & = 
  X(z)\Big(b_-(-z/\nu) +   \big(\beta/\nu-  b_\alpha(\beta,\nu)\big)a_-(-z/\nu)\Big)-   \frac \xi  \eta X(z) a_+(-z/\nu)
\end{aligned}
\end{equation}

\pagebreak[3]

To recap, we arrive at the following equivalent formulation of eigenproblem \eqref{OUeigHlarge}:

\begin{lem}\label{lem5.3}
Let $(p^\pm_0, p^\pm_1, \nu)$ with $\nu>0$ be a solution of the system, which consists of the integral equations \eqref{qp} 
and the algebraic equation \eqref{Im}. Let $\varphi$ be defined by the Laplace transform, given by the formula \eqref{phiz}, 
where $\Phi_0(z)$ and $\Phi_1(z)$ are given by \eqref{Phinorm} and let $\lambda$ be defined by \eqref{lambdanu}.
Then the pair $(\lambda, \varphi)$ solves the eigenproblem \eqref{OUeigHlarge}. Conversely, any solution $(\lambda, \varphi)$ of 
\eqref{OUeigHlarge} defines a solution to the above integro-algebraic system. 
\end{lem}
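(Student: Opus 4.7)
The plan is to verify the two-way correspondence by effectively reversing the derivation of Sections 5.1.1–5.1.3. The necessary direction is already implicit in the preceding subsections: given $(\lambda,\varphi)$ solving \eqref{OUeigHlarge}, Lemma \ref{lem5.1} supplies the representation \eqref{phiz} with $\Phi_0,\Phi_1$ defined through \eqref{Phi0Phi1}. Requiring $\widehat\varphi$ to be entire forces the jump condition \eqref{Hp} on $\Real_{>0}$ and the algebraic constraint \eqref{algc}. Passing to $S,D$ via \eqref{SDdef} and applying Sokhotski--Plemelj to the homogeneous Riemann problem solved by $X$ yields the integral representations \eqref{SD}. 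Setting $z=-t\nu$, rescaling, and comparing with \eqref{qp} identifies $p_0^\pm,p_1^\pm$; the polynomial parts $P_S,P_D$ determine the coefficients $c_1,c_2$. Finally, \eqref{algc} evaluated at $z_0=i\nu$ together with its complex conjugate collapses to the real linear relation \eqref{linsys}, and the existence of a nontrivial real solution $(c_1,c_2)$ is exactly \eqref{Im}.

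For the converse direction, I would start from a solution $(p_0^\pm,p_1^\pm,\nu)$ of \eqref{qp}--\eqref{Im}, fix $\lambda$ by \eqref{lambdanu}, and define $\Phi_0,\Phi_1$ by \eqref{Phinorm}. First, sectional holomorphy of $\Phi_0,\Phi_1$ on $\mathbb{C}\setminus\Real_{>0}$ follows from the Cauchy-type structure of \eqref{qp} and of $X$, and the square-integrability assumption on $p_j^\pm-t^j$ combined with \eqref{Xczinf}--\eqref{Xcz0} delivers the a priori growth rates \eqref{grzero}, \eqref{grinf} required of $\Phi_0,\Phi_1$. The jump condition \eqref{Hp} is then obtained by running the Sokhotski--Plemelj computation of Section 5.1.2 in reverse: introduce $S,D$ by \eqref{SDdef}, check that \eqref{SD} holds by construction (this is the content of \eqref{qp}), so $S,D$ satisfy \eqref{bcndSD}, and \eqref{Hp} follows from $X^+/X^- = e^{2i\theta}$. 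The algebraic constraint \eqref{algc} at $\pm z_0=\pm i\nu$ reduces, after substitution of \eqref{Phinorm}, precisely to \eqref{linsys} with $c_1/c_2$ chosen as $-\nu\xi/\eta$, which is real exactly because of \eqref{Im}.

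Define then $\widehat\varphi(z)$ by the right-hand side of \eqref{phiz}; by design both the zeros of $\Lambda$ and its jump across $\Real$ are cancelled, so $\widehat\varphi$ extends to an entire function. The growth rates of $\Phi_0,\Phi_1$ combined with the behavior of $\Lambda$ give $\widehat\varphi(z)\to 0$ as $\Re z\to+\infty$ along the real axis and the appropriate integrability for $\widehat\varphi(it)$ along the imaginary axis, so Paley--Wiener furnishes an $L^2(0,1)$ function $\varphi$ with Laplace transform $\widehat\varphi$. One then defines $\psi$ from $\varphi$ via \eqref{psidef} and checks that $\widehat\psi = (\widehat\varphi(-\beta)-\widehat\varphi(z))/(z+\beta)$, which matches what \eqref{phiz} gives; the vanishing of $\widehat\varphi$ at $-\beta$ encoded in the constant term of \eqref{phiz} is what produces the boundary condition $\psi(1)=0$.

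The remaining and most delicate step is to verify that $\varphi$ actually satisfies \eqref{OUeigHlarge}. The strategy is to trace each step of the derivation in Lemma \ref{lem5.1} backwards in the Laplace domain: starting from \eqref{phiz}, the identities \eqref{rel2} and then \eqref{rel1} become verifiable algebraic consequences of \eqref{Phinorm} once the jump and constraint conditions are granted; inverting the Laplace transform recovers \eqref{one} and, via \eqref{Gfla}, the generalized eigenproblem \eqref{geneig}; differentiating \eqref{psiphi} and using \eqref{psitagtag} produces \eqref{eigpr_fOU}, whose primitive with the correct constant of integration is \eqref{OUeigHlarge}. The main obstacle is bookkeeping the boundary terms in \eqref{psitagtag} and matching them against the polynomial parts $P_S,P_D$ in \eqref{SD}; this amounts to a consistency check that the second-order polynomial growth of $\Lambda$ in \eqref{Lfla} and the leading-order terms \eqref{grinf} of $\Phi_0,\Phi_1$ combine to give exactly the two boundary data $\psi(0)$ and $\psi'(1)$ encoded in $c_1,c_2$ from \eqref{c1c2}, which is where \eqref{Im} does the essential work.
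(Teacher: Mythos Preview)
Your proposal is correct and matches the paper's approach: the paper presents Lemma \ref{lem5.3} as a ``recap'' without a separate proof environment, treating the derivation of Sections 5.1.1--5.1.3 as establishing the correspondence, with reversibility of the steps implicit. Your sketch of the converse direction (building $\Phi_0,\Phi_1$ from \eqref{Phinorm}, verifying the jump and algebraic conditions, recovering $\varphi$ via Laplace inversion and tracing back to \eqref{OUeigHlarge}) is in fact more explicit than anything the paper writes down, but it is exactly the intended argument.
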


\medskip

The following lemma determines the precise asymptotics of $X_\beta(i;\nu)$ as $\nu\to \infty$, used 
in the calculations to follow:

\begin{lem}\label{lemXbeta}
$$
\arg\big\{X_\beta(i;\nu)\big\} = \frac{1-\alpha}{8}\pi + O(\nu^{-2}) 
\quad \text{and} \quad \big|X_\beta(i;\nu)\big| = \sqrt{\frac{3-\alpha}{2}} + O(\nu^{-2})\quad \nu\to\infty.
$$
\end{lem}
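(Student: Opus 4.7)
The plan is to compute $X_\beta(i;\nu)=X(i\nu)$ directly from the explicit formula \eqref{Xz} (with $k=0$) and then extract the leading asymptotics. First I would perform the change of variables $t=u\nu$ in the Cauchy integral defining the canonical factor, which gives
$$
X_\beta(i;\nu) = \exp\Bigl(\frac{1}{\pi}\int_0^\infty \frac{\theta(u;\nu)}{u-i}\,du\Bigr).
$$
Splitting via the identity $\frac{1}{u-i}=\frac{u+i}{u^2+1}$ yields separate representations for the modulus and the argument:
$$
\log\bigl|X_\beta(i;\nu)\bigr|=\frac{1}{\pi}\int_0^\infty\!\frac{u\,\theta(u;\nu)}{u^2+1}\,du,
\qquad
\arg\bigl\{X_\beta(i;\nu)\bigr\}=\frac{1}{\pi}\int_0^\infty\!\frac{\theta(u;\nu)}{u^2+1}\,du.
$$

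The next step is to pass to the limit $\nu\to\infty$ under the integral signs using the uniform bound \eqref{thetabeta}. The envelope $g(u)$ satisfies $g(u)\sim u^{1-\alpha}$ near $0$ and $g(u)\sim u^{\alpha-3}$ at infinity, so both $g(u)/(u^2+1)$ and $ug(u)/(u^2+1)$ are integrable on $(0,\infty)$ for $\alpha\in(0,1)$. Consequently, replacing $\theta(u;\nu)$ by $\theta_0(u)$ incurs an error of order $(\beta/\nu)^2=O(\nu^{-2})$ in each integral, and the problem reduces to the explicit evaluations
$$
\frac{1}{\pi}\int_0^\infty\!\frac{\theta_0(u)}{u^2+1}\,du=\frac{1-\alpha}{8}\pi,
\qquad
\frac{1}{\pi}\int_0^\infty\!\frac{u\,\theta_0(u)}{u^2+1}\,du=\tfrac{1}{2}\log\frac{3-\alpha}{2}.
$$

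To compute these I would observe from \eqref{theta0} that $\theta_0(u)=\Im\log\bigl(u^{3-\alpha}+e^{i(1-\alpha)\pi/2}\bigr)$, so both integrals arise as imaginary and real parts of
$$
\frac{1}{\pi}\int_0^\infty\frac{\log\bigl(u^{3-\alpha}+e^{i(1-\alpha)\pi/2}\bigr)}{u-i}\,du,
$$
which can be evaluated by contour integration in the upper half plane, picking up the residue at $u=i$ together with a Mellin-type contribution from a keyhole deformation around the branch ray of $z^{3-\alpha}$. Alternatively, and more efficiently, these are exactly the same integrals computed in the course of proving Theorem \ref{main-thm-fbm} in \cite{ChK}: the $\nu\to\infty$ limit here corresponds to the $\beta=0$ kernel, which is the structural function analyzed there. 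The resulting phase $\frac{1-\alpha}{8}\pi=\frac{2H-1}{8}\pi$ matches the shift in \eqref{phinfBm}, while $\sqrt{(3-\alpha)/2}=\sqrt{(2H+1)/2}$ matches the normalization appearing in \eqref{eigffun}, a useful consistency check.

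The only real obstacle is justifying the $O(\nu^{-2})$ error bound rigorously, which hinges on checking that $g$ is integrable against $1/(u^2+1)$ and $u/(u^2+1)$ uniformly — but this is immediate from the stated decay of $g$ at both endpoints. The contour evaluation itself, while classical, requires some bookkeeping of branch cuts; however, it may be bypassed entirely by quoting the parallel computation from \cite{ChK}, since the integrand $\theta_0$ does not depend on $\beta$ and is identical to the one appearing in the fBm analysis.
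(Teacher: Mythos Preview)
Your proposal is correct and follows essentially the same approach as the paper: the paper's proof simply states that the limiting constants are the argument and modulus of $X_0(i)=\exp\bigl(\frac{1}{\pi}\int_0^\infty\frac{\theta_0(u)}{u-i}\,du\bigr)$ as computed in Lemma~5.5 of \cite{ChK}, and that the $O(\nu^{-2})$ residuals follow from \eqref{thetabeta}. You have made both of these steps explicit---the change of variables $t=u\nu$, the split into real and imaginary parts, and the integrability check on $g$ against $1/(u^2+1)$ and $u/(u^2+1)$---but the underlying argument is identical.
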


\begin{proof}
The claimed constants are the argument and the absolute value of  (see \eqref{theta0})
$$
X_0(i):= \lim_{\nu\to\infty}X_\beta(i;\nu) = 
\exp \left(\frac 1 \pi \int_0^\infty\frac{\theta_0(u)}{u -i}du\right),
$$ 
computed in Lemma 5.5 \cite{ChK}. The estimates of the residuals follow  from
\eqref{thetabeta}.
\end{proof}

\subsubsection{Properties of the integro-algebraic system}

Solvability of the system, introduced in Lemma \ref{lem5.3} relies on the 
contracting properties of the operator 
$$
(A f)(t) := \frac 1\pi \int_0^\infty \frac{h_\beta(s;\nu)e^{-\nu s}}{s+t}f(s)ds
$$
where $h_\beta(u;\nu):= h(u\nu)$ (see \eqref{ht}):
\begin{equation}
\label{hbetanu}
h_\beta(u;\nu) = 
\exp \left(-\frac 1 \pi  \int_0^\infty\theta'(v;\nu) \log\left| \frac{u +v }{u -v }\right|dv \right)
\sin \theta(u;\nu).
\end{equation}

\begin{lem}\label{lem5.5}
The operator $A$ is a contraction on $L^2(0,\infty)$ for all $\nu$ large enough. More precisely, 
for any $\alpha_0\in (0,1]$ there exists an $\eps>0$ and a constant $\nu'>0$, such that 
$\|A\|\le 1-\eps$ for all $\nu\ge \nu'$ and all $\alpha \in [\alpha_0,1]$. 
\end{lem}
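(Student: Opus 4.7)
The plan is to exploit the factorization $A = H \circ M_\nu$, where
\[
(H g)(t) := \frac{1}{\pi}\int_0^\infty \frac{g(s)}{s+t}\,ds,\qquad (M_\nu f)(s) := h_\beta(s;\nu) e^{-\nu s}f(s).
\]
Since $H$ is the classical positive Hilbert-type integral operator, the sharp Hilbert inequality gives $\|H\|_{L^2 \to L^2} = 1$, and since $M_\nu$ is multiplication, $\|M_\nu\|_{L^2 \to L^2} = \|h_\beta(\cdot;\nu) e^{-\nu\cdot}\|_{L^\infty}$. Consequently
\[
\|A\|_{L^2 \to L^2} \le \big\|h_\beta(\cdot;\nu) e^{-\nu\cdot}\big\|_{L^\infty(0,\infty)},
\]
and the task reduces to majorizing this supremum by $1-\eps$ uniformly in $\nu \ge \nu'$ and $\alpha \in [\alpha_0,1]$.

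Next I would establish a uniform pointwise bound $h_\beta(u;\nu) \le M$, with $M$ depending only on $\alpha_0$. Starting from the representation \eqref{hbetanu}, the sine factor is bounded by one and the exponential factor is controlled by
\[
\int_0^\infty |\theta'(v;\nu)|\log\Big|\frac{u+v}{u-v}\Big|\,dv.
\]
Using the estimate \eqref{thetabeta} and the explicit form \eqref{theta0} of the limit profile $\theta_0$, the scaled argument $\theta(v;\nu)$ and its derivative admit $\nu$-independent majorants: $\theta_0'(v)$ vanishes at $v=0$ like $v^{2-\alpha}$ and decays at infinity like $v^{\alpha-4}$, so the logarithmic singularity at $v=u$ is integrated uniformly in $u$, $\nu$ and $\alpha \in [\alpha_0,1]$.

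With $M$ in hand, the contraction follows by splitting $[0,\infty)$ at a fixed threshold $u_* > 0$. At the origin, $h_\beta(0;\nu) = \sin \tfrac{(1-\alpha)\pi}{2} \le \sin \tfrac{(1-\alpha_0)\pi}{2} =: 1 - 2\eps_0$, and since $h_\beta(\cdot;\nu)$ is uniformly equicontinuous at $u = 0$ (inherited from uniform convergence to the continuous profile $h_0$ on compact sets as $\nu \to \infty$), $u_*$ can be chosen so that $h_\beta(u;\nu) \le 1 - \eps_0$ on $[0,u_*]$ for every admissible $\nu$ and $\alpha$. On $[u_*,\infty)$ the uniform bound gives $h_\beta(u;\nu) e^{-\nu u} \le M e^{-\nu u_*}$, which drops below $1 - \eps_0$ once $\nu$ exceeds $\nu' := u_*^{-1}\log\bigl(M/(1-\eps_0)\bigr)$. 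Setting $\eps := \eps_0$ completes the argument.

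The main obstacle is the uniform pointwise estimate on $h_\beta$ in the second step: the log-singular kernel must be integrated against $\theta'(v;\nu)$ by a bound independent of $\nu$ and $\alpha \in [\alpha_0,1]$, which in turn requires extracting both the small-$v$ vanishing and the large-$v$ decay of $\theta'(\cdot;\nu)$ from the scaled asymptotics in Lemma \ref{lem4.2}(c), together with a uniform equicontinuity statement for $h_\beta(\cdot;\nu)$ near the origin.
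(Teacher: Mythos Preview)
Your proposal is correct and follows essentially the same route as the paper's proof: both arguments bound the exponential factor in \eqref{hbetanu} uniformly (so that $h_\beta(u;\nu)\le M$), exploit that $h_\beta(0;\nu)=\sin\frac{1-\alpha}{2}\pi\le 1-2\eps_0$ together with continuity near the origin to control a neighborhood $[0,u_*]$, and then let the factor $e^{-\nu u}$ handle $[u_*,\infty)$ for large $\nu$; the final reduction to the Hilbert-type operator with $\|H\|_{L^2\to L^2}=1$ that you write out explicitly is exactly what the paper invokes by referring to ``the same calculation as in Lemma~5.6 in \cite{ChK}''.
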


\begin{proof}
A lengthy but otherwise direct calculation  shows that for all $\nu$ large enough and all $\alpha\in [\alpha_0,1]$, 
the exponent in \eqref{hbetanu} is bounded by a continuous function $f(u)$, which does not depend neither on $\alpha$ 
nor on $\nu$ and converges to $1$ as $u\to 0$ and $u\to\infty$. Consequently 
$$
h_\beta(u,\nu)\le f(u) \sin \theta(u;\nu)\le \|f\|_\infty.
$$
%
Further, since $\theta_0(0+) = \frac {1-\alpha}2\pi$, the estimate \eqref{thetabeta} implies that 
on a neighborhood of the origin we have
$$
\sin \theta(u;\nu) \le \frac 1 2+\frac 1 2\sin \frac {1-\alpha_0}2\pi =: 1-3\eps
$$ 
for all sufficiently large $\nu$. Since $f(0)=1$, this also guarantees that $h_\beta(u,\nu)<1-2\eps$ on some neighborhood of the 
origin, for all large $\nu$. But then, since $\sup_{\nu>0}\|h_\beta\|_\infty\le \|f\|_\infty$, a constant $\nu'$ can be chosen so that 
$h_\beta(u,\nu)e^{-\nu u}<1-\eps$ for all $u>0$ and all $\nu\ge \nu'$. The claim now follows by the same calculation as in Lemma 5.6 in \cite{ChK}.

\end{proof}

\pagebreak[3]

The following estimates are the key to asymptotic analysis of the integro-algebraic system:

\begin{lem}\label{lemest}
For any $\alpha_0\in (0,1]$ there exist constants $\nu'$ and $C$, such that for all $\nu\ge \nu'$ and 
all $\alpha\in [\alpha_0,1]$
\begin{align*}
& \big|a_-(\pm i)\big| \le C\nu^{-1}, \quad \big|a_+(\pm i)-2\big|\le C \nu^{-1} \\
& \big|b_-(\pm i)\big| \le C\nu^{-2}, \quad \big|b_+(\pm i)\mp 2i\big|\le C\nu^{-2} 
\end{align*}
and for all $\tau>0$
\begin{align*}
& \big|a_-(\tau)\big| \le C\nu^{-1}\tau^{-1}, \quad \big|a_+(\tau)-2\big|\le C \nu^{-1}\tau^{-1} \\
& \big|b_-(\tau)\big| \le C\nu^{-2}\tau^{-1}, \quad \big|b_+(\tau)\mp 2\tau\big|\le C\nu^{-2} \tau^{-1}
\end{align*}
\end{lem}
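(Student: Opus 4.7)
The approach is to diagonalize the coupled system \eqref{qp} via the combinations $a_\pm, b_\pm$, use the contraction of $A$ to get $L^2$ bounds on their nontrivial parts, and then upgrade to pointwise bounds at $z = \pm i$ and $z = \tau > 0$ by one additional application of $A$ together with elementary kernel estimates.

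Taking sums and differences of \eqref{qp} and recognizing that $A$ acts in the same variable $s$ for both signs produces, for every $z$ in the cut plane,
\begin{align*}
a_-(z) &= (Aa_+)(z), & a_+(z) &= 2 + (Aa_-)(z), \\
b_-(z) &= (Ab_+)(z), & b_+(z) &= 2z + (Ab_-)(z).
\end{align*}
Substituting the first identity of each pair into the second on the positive real line gives $(I-A^2)a_- = 2A\mathbf{1}$, $(I - A^2)(a_+ - 2) = 2A^2\mathbf{1}$, and analogous relations for $b_\pm$ with $\mathbf{1}$ replaced by $\mathrm{id}(s) = s$. Lemma \ref{lem5.5} supplies $\|A\|_{L^2 \to L^2} \le 1 - \eps$ uniformly in $\alpha \in [\alpha_0, 1]$ and $\nu \ge \nu'$, so $(I - A^2)^{-1}$ is uniformly bounded on $L^2(0,\infty)$. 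A direct computation via the substitution $s = u/\nu$ yields $\|A\mathbf{1}\|_{L^2} = O(\nu^{-1/2})$ and $\|A(\mathrm{id})\|_{L^2} = O(\nu^{-3/2})$, the extra $\nu^{-1}$ coming from the weight $s$ in the integrand. Combined, these give $\|a_-\|_{L^2}, \|a_+ - 2\|_{L^2} = O(\nu^{-1/2})$ and $\|b_-\|_{L^2}, \|b_+ - 2\,\mathrm{id}\|_{L^2} = O(\nu^{-3/2})$.

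For the pointwise bounds I apply $A$ once more via the identities above. For $z = \pm i$ each relation splits into an explicit piece $2(A\mathbf{1})(\pm i)$ or $2(A(\mathrm{id}))(\pm i)$ plus the $A$-image of an $L^2$-small function. The explicit pieces are controlled by the elementary bounds $|1/(s \pm i)| \le 1$ and $|s/(s \pm i)| \le s$, which give $|(A\mathbf{1})(\pm i)| \le C\nu^{-1}$ and $|(A(\mathrm{id}))(\pm i)| \le C\nu^{-2}$. The $A$-image pieces are estimated by Cauchy--Schwarz together with $\|e^{-\nu s}/(s \pm i)\|_{L^2} = O(\nu^{-1/2})$, matching the order of the explicit pieces in every case: in the $a$-estimates the $L^2$ remainder is already $O(\nu^{-1/2})$, while in the $b$-estimates the extra $\nu^{-1/2}$ required to reach $\nu^{-2}$ is supplied by the $O(\nu^{-3/2})$ $L^2$-size of $b_-$. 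At $z = \tau > 0$ the same decomposition works with $|1/(s+\tau)| \le 1/\tau$, $|s/(s+\tau)| \le s/\tau$, and $\|e^{-\nu s}/(s+\tau)\|_{L^2} \le 1/(\tau\sqrt{2\nu})$, producing the extra $\tau^{-1}$ factor in each claimed bound.

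The main technical subtlety is not any one estimate in isolation but the need to keep separate track of two different orders of smallness: one factor of $\nu^{-1/2}$ per application of $A$ in the $L^2$ sense, versus one factor of $\nu^{-1}$ per power of $s$ against the kernel $e^{-\nu s}$ at a fixed evaluation point. The identification that the $b$-bounds require two applications of $A$ to extract the full $\nu^{-2}$ decay, whereas the $a$-bounds need only one, is the pivotal observation. Uniformity in $\alpha \in [\alpha_0, 1]$ is built in at every step through Lemmas \ref{lem4.2} and \ref{lem5.5}, so it does not present an additional obstruction.
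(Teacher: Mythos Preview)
Your proof is correct. The paper defers entirely to Lemma 5.7 of \cite{ChK} after recording the uniform bound on $h_\beta$, so there is no in-paper argument to compare against; your self-contained bootstrap---diagonalize via $a_\pm,b_\pm$, invert $I-A^2$ for $L^2$ bounds of order $\nu^{-1/2}$ and $\nu^{-3/2}$, then apply $A$ once more with the kernel estimates $|1/(s\pm i)|\le 1$, $|s/(s\pm i)|\le s$, $|1/(s+\tau)|\le 1/\tau$ to upgrade to pointwise bounds---is exactly the natural route and almost certainly what the cited lemma does.
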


\begin{proof}
As shown in the proof of the previous lemma, $h_\beta(u;\nu)$ is bounded by a constant, which depends only on $\alpha_0$, for all 
sufficiently large $\nu$. With this estimate, the bounds are obtained as in Lemma 5.7 in \cite{ChK}.
\end{proof}

\subsubsection{Inversion of the Laplace transform}

The following lemma derives an expression for the eigenfunctions in terms of solutions to the integro-algebraic system of 
equations introduced in Lemma \ref{lem5.3}:

\begin{lem}\label{lemeigf}
Let $(\Phi_0,\Phi_1,\nu)$ satisfy the integro-algebraic system from Lemma \ref{lem5.3}. Then 
the corresponding eigenfunction $\varphi$, given by the Laplace transform \eqref{phiz}, satisfies   
\begin{align}\label{phifla}
&
\varphi(x)  = 
- \nu^{3-\alpha}\frac{ \cos \frac{\pi } 2\alpha}{\pi   }   2
\Re\bigg\{e^{i\nu x}\Phi_0(i\nu )\frac { 1-i(\beta/\nu) }{  
\frac{2}{(\beta/\nu)^2 +1}  -\alpha+1 }\bigg\}+\\
\nonumber
&
 \nu^{3-\alpha}\frac{ \cos \frac{\pi } 2\alpha}{\pi   }  \frac 1 \pi\int_0^\infty \frac{  \sin\theta_\beta(u;\nu)}{\gamma_\beta(u;\nu)}\left( e^{-(1-x)u\nu} \big(u+\tfrac \beta \nu\big) \Phi_1(-u\nu)
 -
 e^{-u\nu x} \big(u-\tfrac \beta \nu \big)\Phi_0(-u\nu) \right)du,
\end{align}
where $\gamma_\beta(u;\nu)$ is defined in \eqref{gammabeta} below.
Moreover, 
\begin{equation}\label{flaphi}
\begin{aligned}
\int_0^1 e^{\beta x}\varphi(x)dx &= -\nu^{3-\alpha}\frac {\cos \frac \pi 2 \alpha}{\pi}  2 c_2  \big(1+(\beta/\nu)^2\big) \\
\varphi(1) &= -\nu^{3-\alpha}\frac {\cos \frac \pi 2 \alpha}{\pi} 2 c_2\nu \frac \xi\eta \big(1+(\beta/\nu)^2\big)
\end{aligned}
\end{equation}

\end{lem}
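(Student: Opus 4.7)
The identities \eqref{flaphi} follow directly from the material already in place. From the definition \eqref{psidef} one reads off $\int_0^1 e^{\beta x}\varphi(x)\,dx = \psi(0) = \widehat\varphi(-\beta)$; substituting $\psi(0) = -\frac{2c_\alpha}{\Gamma(\alpha)\lambda}c_2$ from \eqref{c1c2} and the expression for $\lambda$ from \eqref{lambdanu} yields the first equality. For the second, the boundary condition $\psi(1)=0$ and \eqref{psiphi} give $\varphi(1) = -\psi'(1) = \frac{2c_\alpha}{\Gamma(\alpha)\lambda}c_1$, and the linear relation $c_1 = -c_2\nu\,\xi/\eta$ extracted from \eqref{linsys} delivers the stated formula after the same substitution.

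The derivation of \eqref{phifla} is the main content of the lemma, and it proceeds by inverting the Laplace transform via the Bromwich integral $\varphi(x) = \frac{1}{2\pi i}\int_{c-i\infty}^{c+i\infty} e^{zx}\widehat\varphi(z)\,dz$, valid for $x\in(0,1)$ with any real $c$ since $\widehat\varphi$ is entire. Using \eqref{phiz}, noting that the constant part $\widehat\varphi(-\beta)$ contributes nothing for $x>0$, and subtracting its value from $(z+\beta)\Phi_0(z)/\Lambda(z)$ (whose limit at infinity matches $\widehat\varphi(-\beta)$ by \eqref{grinf}) to enforce decay, I split the remaining integrand into two pieces: the $\Phi_0$-piece carrying $e^{zx}$, which is handled by closing the contour into the left half-plane where $e^{zx}$ decays for $x>0$, and the $e^{-z}\Phi_1(-z)$-piece carrying $e^{(x-1)z}$, handled by closing to the right where $e^{(x-1)z}$ decays for $x<1$. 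Each closed contour encloses the two zeros $\pm i\nu$ of $\Lambda$ and wraps around the branch cut along $(0,\infty)$, producing residue and jump contributions.

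The residues at $\pm i\nu$, combined in conjugate pairs via the symmetries of \eqref{Phi0Phi1} and the constraint \eqref{algc}, yield the $\Re\{\cdot\}$ term of \eqref{phifla}. Differentiating \eqref{Lfla} and simplifying with \eqref{lambdanu} gives $\Lambda'(i\nu) = \frac{i\pi\nu^{\alpha-2}}{\cos(\pi\alpha/2)}\big(\frac{2}{1+(\beta/\nu)^2}-\alpha+1\big)$, which accounts for the rational factor $(1-i\beta/\nu)/\big(\frac{2}{1+(\beta/\nu)^2}-\alpha+1\big)$ as well as the overall prefactor $\nu^{3-\alpha}\cos(\pi\alpha/2)/\pi$ common to both terms. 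The branch-cut contributions arise from the jump $\tfrac{1}{\Lambda^+(t)}-\tfrac{1}{\Lambda^-(t)} = -2i\sin\theta(t)/|\Lambda^+(t)|$ computed from Lemma \ref{lem4.2}(b)--(c); after the rescaling $t=u\nu$ this becomes the $\sin\theta_\beta(u;\nu)/\gamma_\beta(u;\nu)$ factor of the integral term, with $\gamma_\beta$ identifying the rescaled modulus $|\Lambda^+|$, and the $\Phi_0$ and $\Phi_1$ summands contribute respectively the two boundary-layer pieces $e^{-u\nu x}(u-\beta/\nu)\Phi_0(-u\nu)$ and $e^{-(1-x)u\nu}(u+\beta/\nu)\Phi_1(-u\nu)$.

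The principal obstacle is justifying the vanishing contribution of the large closing arcs: after subtracting the asymptotic constant, the decay of the integrand is only $O(1/z)$, which is borderline for contour closure, so Jordan's lemma must be invoked, exploiting the strict exponential decay of $e^{zx}$ (respectively $e^{(x-1)z}$) in the interior of the relevant half-plane. A secondary issue is the small indentation of the contour near the origin, where the mild singularities permitted by \eqref{grzero} must be shown to contribute nothing in the limit; once this is checked, the two keyhole integrals combine in the stated form with the correct signs.
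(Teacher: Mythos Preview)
Your approach is essentially the same as the paper's: split the inverse transform into the $f_0$-piece (with $e^{zx}$, closed left) and the $f_1$-piece (with $e^{(x-1)z}$, closed right), collect residues and branch-cut jumps, and read off \eqref{flaphi} from \eqref{c1c2}, \eqref{lambdanu}, \eqref{linsys}. Your computation of $\Lambda'(i\nu)$ and of the jump $1/\Lambda^+ - 1/\Lambda^- = -2i\sin\theta/|\Lambda^+|$ are exactly what the paper uses.

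There are, however, two inaccuracies in your contour bookkeeping that you should fix. First, it is \emph{not} true that ``each closed contour encloses the two zeros $\pm i\nu$'': these lie on the imaginary axis, i.e.\ on the Bromwich line itself. One handles this by first shifting the (entire) integrand slightly to the right, or equivalently indenting to the right of $\pm i\nu$; then only the left-closing $f_0$-contour encloses them, which is why the paper's identity \eqref{intff} records $\mathrm{Res}(f_0,\pm z_0)$ but no residues of $f_1$. (If both contours enclosed them you would double-count; the constraint \eqref{algc} guarantees the $f_0$- and $f_1$-residues cancel, so the total is the same, but your statement as written is incorrect.) Second, the two contours do not both wrap ``around the branch cut along $(0,\infty)$'': the $f_0$-contour, closed to the left, wraps around the \emph{negative} real axis (whence the evaluation $\Phi_0(-u\nu)$ that you yourself write later), while the $f_1$-contour wraps around the positive real axis. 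Your final expressions for the boundary-layer pieces are correct, so this is a slip in narration rather than in substance, but it should be corrected.
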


\begin{proof}
Since $\widehat \varphi(z)$ is an entire function, the inversion of the Laplace transform \eqref{phiz} 
can be carried out by integration on the imaginary axis:
\begin{equation}
\label{phixa}
\begin{aligned}
\varphi(x) & =  -\frac 1{2\pi i}\lim_{R\to\infty}\int_{-iR}^{iR} 
\left( 
   \frac {z+\beta}{\Lambda(z)}
 \Phi_0(z)+ \frac {z+\beta}{\Lambda(z)} e^{-z}\Phi_1(-z)
 -\psi(0)
\right)e^{zx}dz \\
& =
-\frac 1{2\pi i}\lim_{R\to\infty}\int_{-iR}^{iR} \big(f_0(z) +  f_1(z)\big)dz
\end{aligned}
\end{equation}
where we defined 
$$
f_0(z) = e^{zx}\left((z+\beta)\frac {\Phi_0(z)}{\Lambda(z)}   -\psi(0)\right)\quad \text{and}\quad 
 f_1(z) = e^{(x-1)z} (z+\beta) \frac {\Phi_1(-z)}{\Lambda(z)}.
$$
Computing the contour integral as in the proof of Lemma 5.8 in \cite{ChK}, we get 
\begin{equation}\label{intff}
\begin{aligned}
\int_{-i\infty}^{i\infty} \big(f_1(z)+f_0(z)\big)dz =\, & 2\pi i \Big(\Res(f_0,z_0) +  \Res(f_0,-z_0)\Big) +\\
&
\int_0^\infty \big(f_1^+(t)-f_1^-(t)\big)dt
+\int_0^\infty \big(f_0^-(-t)-f_0^+(-t)\big)dt.
\end{aligned}
\end{equation}
Using the symmetries \eqref{conjp} and \eqref{absL} and the definition of $\theta(t)$, we have
\begin{align*}
&
f_1^+(t)-f_1^-(t)   
= -e^{(x-1)t} (t+\beta) \Phi_1(-t)\frac{2i \sin\theta(t)}{\gamma(t)} \\
&
f_0^-(-t)-f_0^+(-t) 
=-e^{-tx} (-t+\beta)\Phi_0(-t)\frac{2i \sin\theta(t)}{\gamma(t)}
\end{align*}
where $\gamma(t)=|\Lambda^+(t)|$, and hence 
\begin{align*}
\varphi(x) = &- \Res\big(f_0,z_0\big) -  \Res\big(f_0,-z_0\big) +\\
&
 \frac 1 \pi \int_0^\infty \frac{  \sin\theta(t)}{\gamma(t)}\left( e^{-(1-x)t} (t+\beta) \Phi_1(-t)-
 e^{-tx} (t-\beta)\Phi_0(-t) \right)dt.
\end{align*}
The residues can be readily computed:   
\begin{align*}
&
\Res\big(f_0,z_0\big)  = 
e^{i\nu x} (i\nu +\beta)\frac {\Phi_0(i\nu )}{\Lambda'(i\nu ) }  =
e^{i\nu x}\Phi_0(i\nu ) \frac  { \cos \frac{\pi } 2\alpha}{\pi   }\nu^{3-\alpha} \frac { 1-i(\beta/\nu) }{  
\frac{2}{(\beta/\nu)^2 +1}  -\alpha+1 } \\
&
\Res\big(f_0,-z_0\big)  =  e^{-i\nu x} (-i\nu +\beta)\frac {\Phi_0(-i\nu )}{\Lambda'(-i\nu)}=
e^{-i\nu x}\Phi_0(-i\nu )\frac  { \cos \frac{\pi } 2\alpha}{\pi   }\nu^{3-\alpha}\frac {1  +i(\beta/\nu)}{
\frac{2 }{(\beta/\nu)^2 +1}   -\alpha+1}
\end{align*}
and therefore 
$$
\Res\big(f_0,z_0\big)+\Res\big(f_0,-z_0\big) = 2\nu^{3-\alpha}\frac  { \cos \frac{\pi } 2\alpha}{\pi   }  
\Re\left\{e^{i\nu x}\Phi_0(i\nu )\frac { 1-i(\beta/\nu) }{  
\frac{2}{(\beta/\nu)^2 +1}  -\alpha+1 }\right\}. 
$$
Plugging this back we get \eqref{phifla}, where 
\begin{equation}
\label{gammabeta}
\gamma_\beta(u;\nu)=\nu^{1-\alpha}\frac{ \cos \frac{\pi } 2\alpha}{\pi   } \big|\Lambda^+(u\nu)\big|= \left| 
\frac{u^2-(\beta/\nu)^2 }{(\beta/\nu)^2 +1}
+
u^{\alpha-1} 
e^{ \frac{1-\alpha}{2}\pi i}\right|.
\end{equation}
Equations  \eqref{flaphi} are obtained in view of \eqref{c1c2}, \eqref{lambdanu} and \eqref{linsys}. 
\end{proof}

\subsubsection{Asymptotic analysis}
Lemma \ref{lem5.3} reduces the eigenproblem \eqref{OUeigHlarge} to integro--algebraic system of equations. 
The following lemma derives exact asymptotics for the algebraic part of its solution:

\begin{lem}\label{lem5.8}
The integro-algebraic system of Lemma \ref{lem5.3} has countably many solutions, which can be enumerated so that 
\begin{equation}
\label{nuas}
\nu_n = \pi \Big(n+\frac 1 2\Big) -\frac{1-\alpha}{4}\pi + \arcsin{\frac{b_\alpha}{\sqrt{1+b^2_\alpha}}} + n^{-1} r_n(\alpha), \quad n\to \infty
\end{equation}
where the residual $r_n(\alpha)$ is bounded, uniformly in $n\in \mathbb{N}$ and $\alpha \in [\alpha_0, 1]$ for any $\alpha_0\in (0,1]$.
\end{lem}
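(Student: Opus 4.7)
The plan is to extract $\nu_n$ from the algebraic constraint \eqref{Im} by perturbative analysis. For any sufficiently large $\nu$, Lemma \ref{lem5.5} ensures the integral equations \eqref{qp} have unique solutions, so $\xi$ and $\eta$ from \eqref{xieta} become well-defined functions of $\nu$, and \eqref{Im} is the only outstanding condition pinning down the spectrum. The strategy is to show that this condition reduces, to leading order, to the vanishing of an elementary cosine, and then to invoke the implicit function theorem to lift its roots to exact solutions.

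Substituting the estimates of Lemma \ref{lemest} --- namely $a_+(\pm i) = 2 + O(\nu^{-1})$, $a_-(\pm i) = O(\nu^{-1})$, $b_+(\pm i) = \pm 2i + O(\nu^{-2})$, $b_-(\pm i) = O(\nu^{-2})$ --- together with $\beta/\nu - b_\alpha(\beta,\nu) = -b_\alpha + O(\nu^{-1})$ from \eqref{bnu}, into \eqref{xieta} will yield
\begin{equation*}
\xi = -2\,e^{i\nu/2}\, X_\beta(i;\nu)(i + b_\alpha) + O(\nu^{-1}), \qquad \eta = 2\,e^{-i\nu/2}\, X_\beta(-i;\nu) + O(\nu^{-1}),
\end{equation*}
where in $\xi$ only the first summand survives to leading order because the second one is multiplied by both $a_-(i)$ and $b_-(i)$, which are small. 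Using the conjugacy $X_\beta(-i;\nu) = \overline{X_\beta(i;\nu)}$ to form the product, and invoking Lemma \ref{lemXbeta} via $\arg X_\beta(i;\nu) = \frac{1-\alpha}{8}\pi + O(\nu^{-2})$ and $|X_\beta(i;\nu)|^2 = \frac{3-\alpha}{2} + O(\nu^{-2})$, together with the identity $\arg(i + b_\alpha) = \pi/2 - \arcsin(b_\alpha/\sqrt{1+b_\alpha^2})$, the imaginary part of $\xi\bar\eta$ reduces, after dividing out the nonvanishing positive factor $4|X_\beta(i;\nu)|^2\sqrt{1+b_\alpha^2}$, to the perturbed transcendental equation
\begin{equation*}
\cos\!\Big(\nu + \tfrac{1-\alpha}{4}\pi - \arcsin\tfrac{b_\alpha}{\sqrt{1+b_\alpha^2}}\Big) = R(\nu, \alpha), \qquad |R(\nu,\alpha)| \le C \nu^{-1}.
\end{equation*}

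The zeros of the unperturbed cosine are $\nu_n^{(0)} := \pi(n+\tfrac{1}{2}) - \tfrac{1-\alpha}{4}\pi + \arcsin\tfrac{b_\alpha}{\sqrt{1+b_\alpha^2}}$, at each of which the derivative of cosine has absolute value $1$. An application of the implicit function theorem on each period-$\pi$ interval then locates a unique root $\nu_n$ within distance $O(\nu^{-1}) = O(n^{-1})$ of $\nu_n^{(0)}$, establishing \eqref{nuas}. Enumerating these roots in increasing order and extending the collection by including the finitely many small-$n$ solutions (guaranteed by compactness and the completeness of the eigenbasis) yields the claimed countable family.

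The principal technical hurdle is not the algebraic inversion but obtaining the $O(\nu^{-1})$ residual \emph{uniformly} in $\alpha \in [\alpha_0, 1]$. This requires that the constants in Lemmas \ref{lem5.5}--\ref{lemest} and in the convergences \eqref{thetabeta}, \eqref{bnu} all depend solely on $\alpha_0$; the critical ingredient is the uniform contraction $\|A\| \le 1-\eps$ supplied by Lemma \ref{lem5.5}. The restriction $\alpha \ge \alpha_0 > 0$ is essential here, since as $\alpha \to 0$ one has $\sin\theta(0+) = \sin \tfrac{1-\alpha}{2}\pi \to 1$, which would destroy contractivity on a neighborhood of the origin and prevent a $\nu$-uniform bound on the residual.
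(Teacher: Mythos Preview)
Your approach is essentially the same as the paper's: substitute the estimates of Lemmas \ref{lemest}, \ref{lemXbeta} and \eqref{bnu} into \eqref{xieta}, compute the leading term of $\xi\bar\eta$, and reduce \eqref{Im} to a perturbed trigonometric equation whose roots are located near $\pi(n+\tfrac12)-\tfrac{1-\alpha}{4}\pi+\arcsin\frac{b_\alpha}{\sqrt{1+b_\alpha^2}}$. The paper writes this as $\xi\bar\eta = 2(3-\alpha)\sqrt{1+b_\alpha^2}\exp\{i(\nu+\tfrac{1-\alpha}{4}\pi-\pi+\arg(i+b_\alpha))\}(1+R(\nu))$ and solves $\Im=0$ by fixed-point iteration rather than IFT language, but the content is identical.

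One point worth making explicit: for the implicit-function argument to give existence and uniqueness of a root in each period-$\pi$ window (rather than just proximity of any root to $\nu_n^{(0)}$), you need control on the derivative of the perturbation, $|\partial_\nu R(\nu,\alpha)|\le C\nu^{-1}$, uniformly in $\alpha\in[\alpha_0,1]$. The paper states this bound explicitly, obtained by a direct calculation tracing the $\nu$-dependence through $h_\beta(\cdot;\nu)$, $X_\beta(\cdot;\nu)$ and the solutions $p_j^\pm$; without it one cannot rule out spurious roots or coalescence.
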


\begin{proof} 
The arguments parallel those of Lemma 5.9 in \cite{ChK}.
Plugging the estimates from Lemma \ref{lemest} and Lemma \ref{lemXbeta} and the estimate \eqref{bnu} into the definition 
\eqref{xieta}, we can write 
$$
\xi \overline{\eta} = 4 \frac{3-\alpha}{2}\sqrt{1+b_\alpha^2}\exp \left\{i\Big(\nu+ \frac{1-\alpha}{4}\pi-\pi+\arg\big\{i+b_\alpha\big\}\Big)\right\}
\big(1+R(\nu)\big)
$$
where $R(\nu)$ is a function satisfying the bound $|R(\nu)|\le C_1 \nu^{-1}$ with a constant $C_1$, depending only on $\alpha_0$. 
Hence the equation \eqref{Im} reads
\begin{equation}
\label{Imexplicit}
\nu+ \frac{1-\alpha}{4}\pi-\pi+\arg\big\{i+b_\alpha\big\}-\pi n+\arctan \frac{\Im\{R(\nu\}}{1+\Re\{R(\nu)\}}=0, \quad n\in \mathbb{Z}.
\end{equation}
This enumerates all possible solutions of the integro-algebraic system from Lemma \ref{lem5.3}. Obviously, $\nu$ is positive for all 
$n$ greater than some integer. Note that at this point it is not yet clear whether there is a solution for any such $n$. 

Existence of the unique solution can be argued for all $n$ large enough as follows. 
Recall that by Lemma \ref{lem5.5} the integral operator in the right hand side of equations \eqref{qp} is contracting in $L_2(0,1)$ 
for all $\nu$ large enough.  
A lengthy but otherwise direct calculation also shows that $|R'(\nu)|\le C_2\nu^{-1}$ with a constant $C_2$.
Hence for any sufficiently large $n$, the system consisting of the integral equations \eqref{qp} and the algebraic equation \eqref{Imexplicit}, has the unique solution, given by the fixed point iterations. 
The asymptotics \eqref{nuas} now follows from \eqref{Imexplicit}, since 
$\arg\{i+b_\alpha\}=\frac \pi 2 -\arcsin \frac{b_\alpha}{\sqrt{1+b_\alpha^2}}$.

\end{proof}

The corresponding asymptotic approximation of the eigenfunctions is obtained using Lemma \ref{lemeigf}:
\begin{lem} 
Under the enumeration, introduced by Lemma \ref{lem5.8}, the eigenfunctions admit the approximation:
\begin{multline}\label{phias}
\varphi_n(x) =   \sqrt{2}
\cos \Big(\nu_n x + \frac {1-\alpha}8 \pi + 
\frac \pi 2 -\arcsin \frac{b_\alpha}{\sqrt{1+b_\alpha^2}}
\Big)
\\
+
 \frac {\sqrt{3-\alpha}} \pi\int_0^\infty \rho_0(u)
 \Big(    -e^{-u\nu_n x}  \frac{u-b_\alpha}{\sqrt{1+b_\alpha^2}} -(-1)^ne^{-(1-x)u\nu_n} \Big)du + n^{-1}r_n(x),
\end{multline}
where the residual $r_n(x)$ is uniformly bounded in both $n\in \mathbb{N}$ and $x\in [0,1]$ and  
\begin{equation}\label{rhofOU}
\rho_0(u)=\frac{  \sin\theta_0(u)}{\gamma_0(u)}X_0(-u).
\end{equation}
Moreover,
\begin{equation}
\label{psias}
\varphi_n(1) \propto   
- (-1)^n \sqrt{3-\alpha}  \big(1+O(n^{-1})\big)
\quad \text{and}\quad 
\int_0^1 e^{\beta x}\varphi_n(x)dx  \propto   
- \sqrt{\frac{3-\alpha}{1+b_\alpha^2}} \nu_n^{-1}
\end{equation}
and 
\begin{equation}\label{Dima}
\int_0^1 \varphi_n(x)dx  \propto- \sqrt{\frac{3-\alpha}{1+b_\alpha^2}} \nu_n^{-1}.
\end{equation}
\end{lem}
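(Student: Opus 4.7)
The plan is to start from the Laplace inversion formula \eqref{phifla} of Lemma \ref{lemeigf} and inject the normalized representation \eqref{Phinorm} of $\Phi_0$ and $\Phi_1$ in terms of the auxiliary functions $a_\pm, b_\pm$, then extract leading-order asymptotics using Lemma \ref{lemest} (for $a_\pm,b_\pm$), Lemma \ref{lemXbeta} (for $X_\beta(i;\nu)$), and the bound \eqref{bnu} on $b_\alpha(\beta,\nu)-b_\alpha$. Under the enumeration of Lemma \ref{lem5.8}, $\nu=\nu_n$ and $c_2$ plays the role of a global multiplicative scaling, absorbed by the eventual $L^2$-normalization.

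First, I would analyze the oscillatory contribution, i.e.\ the residue term in \eqref{phifla}. Substituting \eqref{Phinorm} at $z=i\nu$ and using $b_+(-i)=-2i+O(\nu^{-2})$, $a_+(-i)=2+O(\nu^{-1})$, $a_-(-i)=O(\nu^{-1})$ from Lemma \ref{lemest}, together with $b_\alpha(\beta,\nu)=b_\alpha+O(\nu^{-2})$, one obtains
$\Phi_0(i\nu)/(c_2\nu)=-2X_\beta(i;\nu)(i+b_\alpha)+O(\nu^{-1}).$
Combined with $\frac{1-i\beta/\nu}{2/(1+(\beta/\nu)^2)-\alpha+1}=\frac{1}{3-\alpha}+O(\nu^{-2})$ and Lemma \ref{lemXbeta}, the residue part becomes
$C_n\,\Re\!\bigl\{e^{i\nu_n x}e^{i(1-\alpha)\pi/8}\sqrt{(3-\alpha)/2}\,(i+b_\alpha)\bigr\}\,(1+O(n^{-1}))$
for some scalar $C_n$. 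Writing $i+b_\alpha=\sqrt{1+b_\alpha^2}\,e^{i(\pi/2-\arcsin(b_\alpha/\sqrt{1+b_\alpha^2}))}$ collapses this to the stated $\cos(\nu_n x+\frac{1-\alpha}{8}\pi+\frac{\pi}{2}-\arcsin\frac{b_\alpha}{\sqrt{1+b_\alpha^2}})$ shape.

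For the integral term in \eqref{phifla}, substitute \eqref{Phinorm} at $z=-u\nu$ with $u>0$ and apply Lemma \ref{lemest}: the leading behavior is $\Phi_0(-u\nu)\sim c_2\nu\,X(-u\nu)(-u+\beta/\nu-b_\alpha)$ and $\Phi_1(-u\nu)\sim c_2\nu\,X(-u\nu)(-\xi/\eta)$, with residuals $O(\nu^{-1})$ in $L^2(0,\infty)$. In the limit $\nu\to\infty$ one has $X(-u\nu)\to X_0(-u)$, $\gamma_\beta(u;\nu)\to\gamma_0(u)$, $\theta(u;\nu)\to\theta_0(u)$, so that, after cancellation of the overall scalar factor $c_2\nu^{4-\alpha}\cos(\pi\alpha/2)/\pi$ against the normalization constant, the boundary-layer integrand becomes $\rho_0(u)\bigl[-e^{-u\nu_n x}(u-b_\alpha)/\sqrt{1+b_\alpha^2}-(-1)^n e^{-(1-x)u\nu_n}\bigr]$; here the factor $(-1)^n$ arises from $\xi/\eta\big|_{\nu=\nu_n}\to\pm 1\cdot(\text{unit phase})$ computed in the proof of Lemma \ref{lem5.8}. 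The $L^2$-normalization then fixes the oscillatory amplitude at $\sqrt{2}$ and the boundary-layer prefactor at $\sqrt{3-\alpha}/\pi$, since the oscillatory part contributes $\tfrac12 A^2$ to $\|\varphi\|_2^2$ while the boundary layer is $O(n^{-1/2})$ in $L^2(0,1)$.

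The identities \eqref{psias} follow by direct substitution into \eqref{flaphi}: $\varphi_n(1)$ from the second line (plugging in the normalized $c_2\nu$ and the computed $\xi/\eta$), and $\int_0^1 e^{\beta x}\varphi_n(x)dx$ from the first line. The final identity \eqref{Dima} is obtained either by evaluating $\widehat{\varphi}_n(0)$ through \eqref{phiz} at $z=0$, using $\Lambda(0)$ and $\Phi_0(0),\Phi_1(0)$ asymptotics from Lemma \ref{lemest}, or by integrating \eqref{phias} term by term over $[0,1]$, noting that only the oscillatory term survives to leading order and contributes a factor $\sin(\nu_n+\text{phase})/\nu_n$, whose sign matches the stated constant. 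The main technical obstacle is controlling the residual $r_n(x)$ uniformly on $[0,1]$: the $O(n^{-1})$ errors in $a_\pm,b_\pm$ from Lemma \ref{lemest} carry a $\tau^{-1}$ singularity at the origin, so one must split the $u$-integral near $0$ and near $\infty$ and exploit the compensating exponential decay $e^{-u\nu_n x}$ and $e^{-u\nu_n(1-x)}$ together with the integrability of $\rho_0$ to obtain a bound independent of $x$, analogously to the corresponding step in \cite{ChK}.
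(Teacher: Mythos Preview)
Your proposal is correct and follows essentially the same route as the paper: plug \eqref{Phinorm} into the inversion formula \eqref{phifla}, use the estimates of Lemmas \ref{lemest} and \ref{lemXbeta} and the bound \eqref{bnu} to extract leading terms, and normalize in $L^2(0,1)$; the identities \eqref{psias} come directly from \eqref{flaphi}. One small point of divergence concerns \eqref{Dima}: your claim that ``only the oscillatory term survives to leading order'' when integrating \eqref{phias} is not quite right, since the boundary-layer integrals $\int_0^1 e^{-u\nu_n x}\,dx$ and $\int_0^1 e^{-u\nu_n(1-x)}\,dx$ are each of order $\nu_n^{-1}$ and therefore contribute to the constant as well. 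The paper sidesteps this computation by observing that the approximation \eqref{phias} (without the residual) is independent of $\beta$, so the leading constant in $\int_0^1\varphi_n(x)\,dx$ must agree with the $\beta=0$ case, i.e.\ with the f.B.m.\ value already computed in \cite{ChK}. Your alternative of evaluating $\widehat\varphi_n(0)$ via \eqref{phiz} would also work but requires a bit more bookkeeping.
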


\begin{proof}

Let 
$
\gamma_0(u):= \big|u +u^{\alpha-2} e^{ \frac{1-\alpha}{2}\pi i}\big|,
$
then by \eqref{gammabeta} 
$$
\big|\gamma_\beta(u;\nu)-u\gamma_0(u)\big|\le 2(\beta/\nu)^2(u^2+1).
$$
Along with \eqref{thetabeta}, formula \eqref{phifla} gives:
\begin{multline*}
\varphi_n(x) \propto  - \frac 2 {3-\alpha}\Re\Big\{e^{i\nu_n x}\Phi_0(i\nu_n )\Big\}\\
+
 \frac 1 \pi\int_0^\infty \frac{  \sin\theta_0(u)}{\gamma_0(u)}\left( e^{-(1-x)u\nu_n}   \Phi_1(-u\nu_n)-
 e^{-u\nu_n x}  \Phi_0(-u\nu_n) \right)du + n^{-1}r_n(x)
\end{multline*}
where the residual $r_n(x)$ is bounded in both $n\in \mathbb{N}$ and $x\in [0,1]$. Now plugging the estimates 
from Lemma \ref{lemest} and  Lemma \ref{lemXbeta} and \eqref{bnu}  into the expressions \eqref{Phinorm} and normalizing 
to the unit $L^2(0,1)$ norm, as in (5.52) in \cite{ChK},   gives \eqref{phias}.
Formulas \eqref{flaphi} give \eqref{psias} after the same normalizing.

Asymptotics \eqref{Dima} is obtained by integrating \eqref{phifla}:  
a direct calculation shows that 
$$
\int_0^1 \varphi_n(x)dx = C \nu_n^{-1}\big(1+O(\nu_n^{-1})\big)\quad n\to\infty,
$$
where $C\nu_n^{-1}$ coincides with the integral of the expression in \eqref{phias} without the residual. 
Since this expression does not depend on $\beta$, the constant factor $C$ must coincide with that obtained 
for $\beta = 0$. In other words, the sequence of integrals $\int_0^1 \varphi_n(x)dx$ for the fractional Ornstein--Uhlenbeck 
process and the f.B.m. has the exactly same leading order asymptotics. The exact constant in \eqref{Dima} can therefore be 
taken from (5.53) in \cite{ChK}. 

\end{proof}

\subsubsection{Enumeration alignment} 

The enumeration introduced in Lemma \ref{lem5.8} may not coincide with the {\em natural} enumeration, which puts the eigenvalues 
into decreasing order. Note that when the expression \eqref{nuas} is plugged into \eqref{lambdanu} the emerging sequence 
of $\lambda_n$'s in our enumeration is strictly decreasing; hence starting from some index it can differ from the natural enumeration 
only by a finite shift. To identify this shift we can use the calibration procedure, based on continuity of the 
spectrum with respect to $\alpha$ and the known asymptotics \eqref{OUeig} for the standard Ornstein--Uhlenbeck process,
corresponding to $\alpha=1$. The precise details are the same as in Section 5.1.7. in \cite{ChK} and  
the formulas \eqref{nuas} and \eqref{phias}-\eqref{psias} should be shifted by one: replacing $n$ with $n-1$ and 
$\alpha$ with $2-2H$ the expressions claimed in Theorem \ref{thm-fOU} are obtained. 
%
%
%
%

\subsection{The case $H<\frac 1 2$} In this case the covariance function has the form \eqref{OUKst} 
and the eigenproblem reads:
$$
\int_0^1 \left(
\int_0^x e^{\beta(x-u)}\frac d{du}\int_0^y e^{\beta(y-v)}C_\alpha|u-v|^{1-\alpha}\sign(u-v) dvdu
\right)\varphi(y)dy = \lambda \varphi(x),
$$
where $C_\alpha:=1-\frac \alpha 2$.  Taking the derivative of both sides gives 
$$
\int_0^1 \left(
  \frac d{dx}\int_0^y e^{\beta(y-v)}C_\alpha|x-v|^{1-\alpha}\sign(x-v) dv \right)\varphi(y)dy
  +
\beta \lambda \varphi(x) = \lambda \varphi'(x). 
$$
This can be rewritten as 
$$
-\frac d{dx}\int_0^1 \frac{d}{dy} \left(\int_y^1 e^{\beta r}\varphi(r)dr\right)\left(
  \int_0^y e^{-\beta v}C_\alpha|x-v|^{1-\alpha}\sign(x-v) dv \right)dy
  +
\beta \lambda \varphi(x) = \lambda \varphi'(x),
$$
and integrating by parts we get
$$
 \frac d{dx}\int_0^1  C_\alpha|x-y|^{1-\alpha}\sign(x-y) \psi(y) dy + \beta \lambda\varphi(x) = \lambda \varphi'(x),
$$
where $\psi(x)$ is defined as in \eqref{psidef}. Plugging in the identity \eqref{psiphi}
we obtain the generalized eigenproblem (cf. \eqref{geneig}):  
$$
\begin{aligned}
\frac d{dx}\int_0^1  &  C_\alpha|x-y|^{1-\alpha}\sign(x-y) \psi(y) dy    = 
 \lambda\Big(\beta^2 \psi(x)-\psi''(x)\Big) 
 , \quad x\in [0,1] \\
&
\psi(1) =  0, \; 
\psi'(0)    +\beta \psi(0)  =0
\end{aligned}
$$
From here on, the proof proceeds as in the case $H>\frac 12$.

\section{Proof of Theorem \ref{thm-ifBm}}\label{sec-proof-ifBm}

For the integrated f.B.m. with covariance function \eqref{KifBm}, eigenproblem \eqref{eigpr} reads
\begin{equation}\label{eigifBm}
\int_0^1 \left(\int_0^y\int_0^x K(u,v) dudv\right) \varphi(y)dy = \lambda \varphi(x)\quad x\in [0,1],
\end{equation}
where $K(u,v) :=\tfrac 1 2\left(u^{2-\alpha}+v^{2-\alpha}-|v-u|^{2-\alpha}\right)$ and  $\alpha := 2-2H\in (0,2)$.

\subsection{The case $H<\frac 1 2$}

\subsubsection{The Laplace transform} 

Our starting point is again a suitable expression for the Laplace transform:

\begin{lem}
Let $(\lambda,\varphi)$ be a solution of  \eqref{eigifBm}, then the Laplace transform of $\varphi$ satisfies 
\begin{equation}
\label{phizifbm2} 
\widehat\varphi(z)=  \widehat \varphi(0)+z\frac d {dz}\widehat \varphi(z)_{\big|z=0}-\frac {e^{-z}\Phi_1(-z)+\Phi_0(z)}{\Lambda(z)} 
\end{equation}
where functions $\Phi_0(z)$ and $\Phi_1(z)$
are sectionally holomorphic on the cut plane $\mathbb{C}\setminus \Real_{>0}$ and 
\begin{equation}\label{lambdaifbm2}
\Lambda(z) := \frac {\lambda\Gamma(\alpha-1)}{1-\frac{\alpha}{2}}z^2 - \int_0^\infty \frac{2 t^{\alpha-2 }}{z^2-t^2}.
\end{equation}
\end{lem}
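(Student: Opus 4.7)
The plan is to follow the methodology of Lemma~\ref{lem5.1}, adapted to the integrated fractional kernel \eqref{KifBm} with $H<\tfrac12$, so that $\alpha:=2-2H\in(1,2)$. From the structural vanishing $K_{ifBm}(0,y)=\partial_x K_{ifBm}(0,y)=0$, evaluating \eqref{eigifBm} and its $x$-derivative at $x=0$ immediately yields the boundary conditions $\varphi(0)=\varphi'(0)=0$, which will be crucial when applying the Laplace transform to derivatives of $\varphi$.

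Next, I would integrate by parts twice in $y$ using the iterated antiderivatives $\psi_1(y):=\int_y^1 \varphi(r)\,dr$ and $\psi_2(y):=\int_y^1 (r-y)\varphi(r)\,dr$, which satisfy $\psi_2(1)=\psi_2'(1)=0$. The symmetries $K_{ifBm}(x,0)=\partial_y K_{ifBm}(x,0)=0$ annihilate all boundary contributions, reducing the eigenequation to
\[
\lambda \varphi(x)=\int_0^1 \partial_y^2 K_{ifBm}(x,y)\,\psi_2(y)\,dy,
\]
in which $\partial_y^2 K_{ifBm}(x,y)=\tfrac{2-\alpha}{2}xy^{1-\alpha}+\tfrac{1}{2}\big(|x-y|^{2-\alpha}-y^{2-\alpha}\big)$ stays locally integrable on $[0,1]^2$. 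One more derivative in $x$ yields a kernel proportional to $|x-y|^{1-\alpha}\sign(x-y)$, to which the representation
\[
|x-y|^{1-\alpha}=\frac{1}{\Gamma(\alpha-1)}\int_0^\infty t^{\alpha-2}e^{-t|x-y|}\,dt,\qquad \alpha\in(1,2),
\]
applies exactly as \eqref{Gfla} does in the fractional Ornstein--Uhlenbeck setting, producing the double-integral structure that assembles $\Lambda(z)$.

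The third step is to take the Laplace transform and rearrange. Integration in $x$ of $e^{-zx}e^{-t|x-y|}$ over $[0,1]$ generates factors $(z-t)^{-1}$ and $(z+t)^{-1}e^{-z}$, the Sokhotski--Plemelj ingredients, which define sectionally holomorphic functions $\Phi_0(z)$ and $\Phi_1(z)$ on $\mathbb{C}\setminus\Real_{>0}$ in analogy with \eqref{Phi0Phi1}. The $z^2$-factor in \eqref{lambdaifbm2} originates from the Laplace transform of $\varphi''$ once the boundary conditions from the first step are absorbed, while the integral portion of $\Lambda(z)$ is the coefficient of $\widehat\psi_2(z)$ after the exponential representation of the fractional kernel has been inserted. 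Dividing by $\Lambda(z)$ and solving for $\widehat\varphi(z)$ then produces the claimed form \eqref{phizifbm2}.

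The main obstacle is the appearance of the polynomial prefix $\widehat\varphi(0)+z\,\widehat\varphi'(z)|_{z=0}$, which was absent in \eqref{phiz}. It arises because the $x$-linear prefactor $\tfrac{2-\alpha}{2}xy^{1-\alpha}$ in $\partial_y^2 K_{ifBm}$, together with the constant-in-$x$ term $-\tfrac{1}{2}y^{2-\alpha}$, couples to the endpoint data $\psi_2(0)=\int_0^1 r\varphi(r)\,dr=-\widehat\varphi'(z)|_{z=0}$ and $\psi_2'(0)=-\widehat\varphi(0)$. Tracking these contributions through the algebra identifies them precisely with the first two Taylor coefficients of $\widehat\varphi$ at the origin, and careful bookkeeping of boundary terms at every integration by parts is required, both to avoid the non-locally-integrable regime of $|x-y|^{-\alpha}$ that one derivative too many would force, and to guarantee that the resulting $\Phi_0$ and $\Phi_1$ admit a priori growth estimates at $0$ and $\infty$ analogous to \eqref{grzero}--\eqref{grinf}, as will be needed for the boundary-value problem analyzed in the subsequent lemmas.
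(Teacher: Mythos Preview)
Your overall strategy matches the paper's: reduce to an equation for the double antiderivative $\psi(x)=\int_x^1(r-x)\varphi(r)\,dr$ (your $\psi_2$, which equals the paper's $\psi$), isolate the convolution kernel $|x-y|^{1-\alpha}\sign(x-y)$ after enough differentiations, insert the exponential representation with $\Gamma(\alpha-1)$, and take Laplace transforms. The boundary data you list --- $\psi(1)=\psi'(1)=0$ and $\varphi(0)=\varphi'(0)=0$ (equivalently $\psi''(0)=\psi^{(3)}(0)=0$) --- are exactly what the paper uses.

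Your diagnosis of the polynomial prefix $\widehat\varphi(0)+z\,\widehat\varphi'(0)$ is off, though. Those two constants do \emph{not} come from the auxiliary pieces $\tfrac{2-\alpha}{2}xy^{1-\alpha}$ and $-\tfrac12 y^{2-\alpha}$ in $\partial_y^2K_{ifBm}$. In the paper's route one differentiates once more in $x$, which kills both of those terms outright, leaving the clean fourth-order equation
\[
(1-\tfrac\alpha2)\,\frac{d}{dx}\int_0^1\sign(x-y)|x-y|^{1-\alpha}\psi(y)\,dy=\lambda\psi^{(4)}(x).
\]
After Laplace-transforming this, one obtains an expression for $z^2\widehat\psi(z)$, and the polynomial prefix appears only at the very last step, from the elementary identity
\[
\widehat\varphi(z)=\widehat{\psi''}(z)=-\psi'(0)-z\psi(0)+z^2\widehat\psi(z),
\]
together with $-\psi'(0)=\widehat\varphi(0)$ and $-\psi(0)=\widehat\varphi'(0)$. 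If you try to carry the $xy^{1-\alpha}$ and $y^{2-\alpha}$ terms through the Laplace transform directly, they produce integrals $\int_0^1 y^{1-\alpha}\psi(y)\,dy$ and $\int_0^1 y^{2-\alpha}\psi(y)\,dy$ rather than $\psi(0)$ and $\psi'(0)$, and the bookkeeping becomes unnecessarily tangled. Take the extra $x$-derivative first; the prefix then falls out for free.
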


\begin{proof}
Taking derivative of \eqref{eigifBm} and changing the order of integration we get 
\begin{equation}
\label{eq:7.4}
\int_0^1 K(x,y) \int_y^1 \varphi(u)du dy=\lambda \varphi^{\prime}(x), \quad x\in [0,1].
\end{equation}
Define 
$
\psi(x) = \displaystyle\int_x^1 \int_y^1 \varphi(u) du dy,
$
then integration by parts gives 
\begin{align*}
\int_0^1 K(x,y) \int_y^1 \varphi(u)du dy= & -\int_0^1 K(x,y) \psi^{\prime}(y)dy= \\
&
\int_0^1 \big(1-\tfrac{\alpha}{2}\big)(y^{1-\alpha}+\sign{(x-y)}\lvert x-y\rvert^{1-\alpha})\psi(y)dy.
\end{align*}
Since $\varphi(x)=\psi^{\prime\prime}(x)$ equation \eqref{eq:7.4} reads 
$$
 (1-\tfrac{\alpha}{2}) \int_0^1 \big(y^{1-\alpha}+\sign{(x-y)}\lvert x-y\rvert^{1-\alpha}\big)\psi(y)dy=\lambda \psi^{(3)}(x), \quad x\in [0,1],
$$
and, taking another derivative we arrive at the generalized eigenproblem
\begin{equation}\label{geneig_ifBm2}
\begin{aligned}
\big(1-\tfrac{\alpha}{2}\big) \frac{d}{dx} \int_0^1 \sign{(x-y)}\lvert x-y\rvert^{1-\alpha}\psi(y)dy=\lambda \psi^{(4)}(x), \quad x\in [0,1] \\
\psi(1)=0, \quad \psi'(1)=0, \quad \psi''(0)=0, \quad \psi^{(3)}(0)=0.
\end{aligned}
\end{equation}

Using the identity \eqref{Gfla}, with $\alpha$ replaced by $\alpha-1\in (0,1)$, the expression in the left hand side can be rewritten 
in the form: 
\begin{multline*}
\frac{d}{dx} \int_0^1 \sign{(x-y)}\lvert x-y\rvert^{1-\alpha}\psi(y)dy = \\ 
\frac{1}{\Gamma(\alpha-1)} \frac{d}{dx} \int_0^\infty t^{\alpha-2}\left(\int_0^1 \sign{(x-y)} e^{-t|x-y|}\psi(y)dy\right) dt
\end{multline*}
If we now define 
\begin{equation}\label{uudef}
u(x,t) := \int_0^1 \sign{(x-y)} e^{-t|x-y|} \psi(y) dy\quad \text{and}\quad  u_0(x) := \int_0^{\infty} t^{\alpha-2} u(x,t) dt,
\end{equation}
equation \eqref{geneig_ifBm2} becomes
\begin{equation} \label{u0psi}
u'_0(x)=\frac{\lambda \Gamma(\alpha-1)}{1-\frac{\alpha}{2}}\psi^{(4)}(x), \quad x\in [0,1].
\end{equation}
Taking the Laplace transform and plugging the boundary conditions from \eqref{geneig_ifBm2}
we get 
\begin{equation}\label{hatutag}
\begin{aligned}
\widehat u'_0(z) = \,
&
\frac{\lambda \Gamma(\alpha-1)}{1-\frac{\alpha}{2}} \int_0^1 e^{-zx} \psi^{(4)}(x) dx=\\
&
\frac{\lambda \Gamma(\alpha-1)}{1-\frac{\alpha}{2}}
\Big( e^{-z}\psi^{(3)}(1)+z e^{-z}\psi''(1)-z^2 \psi'(0)-z^3 \psi(0) +z^4 \widehat \psi(z) \Big).
\end{aligned}
\end{equation}
Another expression for $\widehat u'_0(z)$ can be obtained, using definitions \eqref{uudef} directly. Taking two derivatives of the expression 
for $u(x,t)$ gives the equation 
\begin{equation}\label{uxttt}
u''(x,t) = 2\psi'(x)+t^2 u(x,t)
\end{equation}
with the boundary conditions 
\begin{align*}
u'(0,t) = & \phantom{+} t u(0,t) + 2\psi(0) \\
u'(1,t) = & - t u(1,t).
\end{align*}
Integrating by parts twice and plugging these conditions yields 
\begin{align*}
\widehat u''(z,t) =\, &  \int_0^1 u''(x,t) e^{-zx}dx=u'(1,t) e^{-z} - u'(0,t) + z u(1,t)e^{-z} -z u(0,t) +z^2 \widehat u(z,t) = \\
&
\big(z- t\big) u(1,t) e^{-z} -  \big(z+t\big) u(0,t) - 2\psi(0)    +z^2 \widehat u(z,t).
\end{align*}
Combining this with the Laplace transform of \eqref{uxttt} gives 
$$
\widehat u(z,t)= \frac {2z}{ z^2  -t^2} \widehat \psi (z)   -\frac 1{z+t} u(1,t) e^{-z} +  \frac 1 {z-t} u(0,t).
$$
Multiplying by $t^{\alpha-2}$ and integrating we obtain 
$$
\widehat u_0(z) =  
  \int_0^\infty \frac {t^{\alpha-2}} {z-t} u(0,t)dt-e^{-z} \int_0^\infty \frac {t^{\alpha-2}}{z+t} u(1,t) dt  
+ z\widehat \psi (z) \int_0^\infty\frac {2t^{2-\alpha}}{ z^2  -t^2}dt,
$$
and, since $\widehat u'_0(z)=u_0(1)e^{-z} -u_0(0)+z \widehat u_0(z)$, 
$$
\widehat u'_0(z) = \int_0^\infty \frac{t^{\alpha-1}}{z-t}u(0,t)dt+e^{-z} \int_0^\infty \frac{t^{\alpha-1}}{z+t}u(1,t)dt+z^2\widehat \psi(z) \int_0^\infty \frac{2t^{\alpha-2}}{z^2-t^2}dt. 
$$
Combining this with \eqref{hatutag} and rearranging we get 
$$
z^2 \widehat \psi(z)=-\frac {e^{-z}\Phi_1(-z)+\Phi_0(z)}{\Lambda(z)},
$$  
where $\Lambda(z)$ is given in \eqref{lambdaifbm2} and 
\begin{equation}\label{phi01ifbm2}
\begin{aligned}
\Phi_0(z) &:= -\frac {\lambda\Gamma(\alpha-1)}{1-\frac{\alpha}{2}} \psi'(0)z^2 - \frac {\lambda\Gamma(\alpha-1)}{1-\frac{\alpha}{2}} \psi(0)z^3+\int_0^\infty \frac{t^{\alpha-1}}{t-z}u(0,t)dt\\
\Phi_1(z) &:= \frac {\lambda\Gamma(\alpha-1)}{1-\frac{\alpha}{2}} \psi^{(3)}(1) - \frac {\lambda\Gamma(\alpha-1)}{1-\frac{\alpha}{2}} \psi''(1)z-\int_0^\infty \frac{t^{\alpha-1}}{t-z}u(1,t)dt.
\end{aligned}
\end{equation}
The expression \eqref{phizifbm2} follows since 
$
\widehat \varphi(z) = \widehat \psi''(z)= -\psi'(0)-\psi(0)z + z^2 \widehat \psi(z).
$

\end{proof}

\pagebreak[3]

The next lemma  details the structure of $\Lambda(z)$:

\begin{lem}  \label{lem5.2}
\

\medskip
\noindent
a) Function $\Lambda(z)$ admits the expression  
\begin{equation}\label{Lambdazz}
\Lambda (z) = \frac {\lambda\Gamma(\alpha)}{|c_\alpha|} z^2 - \frac{\pi   }{ |\cos \frac{\pi } 2\alpha|} z^{\alpha-3} 
\begin{cases}
e^{\frac{1-\alpha}{2}\pi i}  & \arg (z) \in (0,\pi) \\
e^{-\frac{1-\alpha}{2}\pi i }  &   \arg(z)\in (-\pi, 0)
\end{cases}
\end{equation}
where $c_\alpha=(1-\frac{\alpha}{2})(1-\alpha)$. It has two zeros at $\pm z_0 = \pm \nu i$ with $\nu\in \Real_{>0}$ given by
\begin{equation}\label{lambdanuifbm2}
\nu^{\alpha-5} =   \frac{\lambda\Gamma(\alpha)}{|c_\alpha|}\frac{|\cos \frac{\pi } 2\alpha|}{\pi}.
\end{equation}

\medskip 
\noindent
b) The limits of $\Lambda(z)$ across the real axis are given by 
$$
\Lambda^\pm (t) =
 \frac {\lambda\Gamma(\alpha)}{|c_\alpha|} t^2
\mp
|t|^{\alpha-3} 
\frac{\pi   }{ |\cos \frac{\pi } 2\alpha|}
\begin{cases}
e^{\frac{1\mp\alpha}{2}\pi i} & \quad t>0 \\
e^{-\frac{1\mp\alpha}{2}\pi i } & \quad t<0
\end{cases}
$$
and satisfy symmetries \eqref{conjp}-\eqref{absL}. 

\medskip 
\noindent 
c)  The argument $\theta(t):=\arg\{\Lambda^+(t)\}\in (-\pi, \pi]$ is an odd function $\theta(-t)=-\theta(t)$
$$
\theta(t)= \arctan \frac
{
 -\sin \frac{1-\alpha}{2}\pi  
}
{
(t/\nu)^{5-\alpha} -    \cos \frac{1-\alpha}{2}\pi   
},\qquad t>0
$$
decreasing continuously from $\theta(0+) := \frac{3-\alpha}{2}\pi$ to $0$ as $t\to\infty$. Function $\theta_0(u):=\theta(u\nu)$ satisfies 
\begin{equation}\label{bifbm2}
\begin{aligned}
b_0 &:=\frac{1}{\pi}\int_0^\infty \theta_0(t) dt = \frac{\sin\big(\frac{\pi}{2}\frac{3-\alpha }{5-\alpha}\big)}{\sin\frac{\pi}{5-\alpha}}\\
b_1 &:=\frac{1}{\pi}\int_0^\infty t \theta_0(t)dt =\frac{1}{2}\\ 
b_2 &:=\frac{1}{\pi}\int_0^\infty t^2 \theta_0(t) dt = \frac{1}{3} \frac{\sin\big(\frac{3\pi}{2}\frac{3-\alpha}{5-\alpha}\big)}{\sin\frac{3\pi}{5-\alpha}}\\
\end{aligned}
\end{equation}
and consequently 
\begin{equation}\label{inttheta}
\frac{1}{\pi} \int_0^\infty \frac{\theta(s)}{s-z}ds = -\frac{\nu b_0}{z}-\frac{\nu^2 b_1}{z^2}-\frac{\nu^3 b_2}{z^3} + O(z^{\alpha-5}), \quad z \to\infty, \quad z \in \mathbb{C}\setminus \Real_{>0}.
\end{equation}
\end{lem}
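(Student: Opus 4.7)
The plan is to prove the three parts in sequence, with virtually all of the substance concentrated in (c).

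\textbf{Part (a).} I would compute the integral in \eqref{lambdaifbm2} by the same contour argument that produced \eqref{intfla}, but with the parameter $\alpha$ replaced by $\alpha - 2 \in (-1,0)$, giving
$\int_0^\infty \frac{2t^{\alpha-2}}{t^2-z^2}dt = z^{\alpha-3}\frac{\pi}{\cos\frac{\pi\alpha}{2}}e^{\pm\frac{3-\alpha}{2}\pi i}$.
Using $\Gamma(\alpha-1) = \Gamma(\alpha)/(\alpha-1)$ and the identity $e^{\pm\frac{3-\alpha}{2}\pi i} = -e^{\mp\frac{\alpha-1}{2}\pi i}$, together with the fact that both $\cos\frac{\pi\alpha}{2}$ and $c_\alpha$ are negative for $\alpha\in(1,2)$, the formula \eqref{Lambdazz} drops out. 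For the zeros, writing $z = \nu e^{i\omega}$ with $\omega\in(0,\pi)$ reduces $\Lambda(z)=0$ to $\omega \equiv \frac{(1-\alpha)\pi}{2(5-\alpha)} \pmod{\frac{2\pi}{5-\alpha}}$; since $5-\alpha\in(3,4)$, the only representative lying in $(0,\pi)$ is $\omega = \pi/2$, giving the root $z = i\nu$ with $\nu$ as in \eqref{lambdanuifbm2}, and $-i\nu$ in the lower half-plane by conjugation.

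\textbf{Part (b).} The boundary limits $\Lambda^\pm(t)$ come directly from \eqref{Lambdazz} by approaching the real line along $\arg z = 0^\pm$ and $\arg z = \pi^\pm$, with $z^{\alpha-3}$ evaluated on the principal branch; for $t<0$ one combines $e^{i(\alpha-3)\pi}$ with $e^{\frac{1-\alpha}{2}\pi i}$ into $-e^{-\frac{1-\alpha}{2}\pi i}$ to produce the stated sign. The symmetries \eqref{conjp}--\eqref{absL} are verified by inspection from the representation $\Lambda^\pm(t) = A(t) - B(t) e^{\pm i\phi}$ for $t>0$ with $A(t),B(t)$ real positive and $\phi = \frac{1-\alpha}{2}\pi$.

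\textbf{Part (c).} The key step is the factorisation, using the zero condition from (a),
$$\Lambda^+(t) = B(t)\bigl((t/\nu)^{5-\alpha} - e^{i\phi}\bigr),\qquad B(t)>0,$$
so that $\theta(t) = \arg\bigl(u^{5-\alpha} - e^{i\phi}\bigr)$ with $u = t/\nu$, understood as the continuous extension from $\arg(-e^{i\phi}) = \pi + \phi = \frac{3-\alpha}{2}\pi$ at $u=0^+$; the stated arctan formula is this continuous branch (with an implicit $\pi$-shift on the interval where the real part is negative). Monotone decrease follows by differentiating: $\Im\Lambda^+(t) = -B(t)\sin\phi > 0$ for all $t>0$ (since $\sin\phi<0$), while $\Re\Lambda^+(t) = A(t) - B(t)\cos\phi$ is strictly increasing ($A'>0$, $B'<0$, $\cos\phi>0$), and oddness is a restatement of (b). For the moments, the substitution $v = u^{5-\alpha}$ and one integration by parts give
$$(k+1) b_k = -\frac{\sin\phi}{\pi}\int_0^\infty \frac{v^s\,dv}{v^2 - 2v\cos\phi + 1},\qquad s := \frac{k+1}{5-\alpha}.$$
The Mellin integral is evaluated by a keyhole contour around $\Real_{>0}$ for $z^s/(z^2 - 2z\cos\phi + 1)$: with $\arg z\in(0,2\pi)$, the pole $e^{i\phi}$ (in the lower half-plane since $\phi<0$) contributes $e^{is(\phi+2\pi)}$ and $e^{-i\phi}$ contributes $e^{-is\phi}$, yielding
$$\int_0^\infty \frac{v^s\, dv}{v^2-2v\cos\phi+1} = -\frac{\pi\sin(s(\phi+\pi))}{\sin\pi s\, \sin\phi},$$
hence $b_k = \dfrac{\sin\bigl(\tfrac{k+1}{5-\alpha}\cdot\tfrac{3-\alpha}{2}\pi\bigr)}{(k+1)\sin\tfrac{(k+1)\pi}{5-\alpha}}$. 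Specialising to $k=0,2$ gives \eqref{bifbm2}; the collapse to $b_1 = \tfrac12$ comes from $\sin\tfrac{(3-\alpha)\pi}{5-\alpha} = \sin\bigl(\pi - \tfrac{2\pi}{5-\alpha}\bigr) = \sin\tfrac{2\pi}{5-\alpha}$, since $\tfrac{(3-\alpha)\pi}{5-\alpha} + \tfrac{2\pi}{5-\alpha} = \pi$. Finally, \eqref{inttheta} follows from the truncated geometric identity $\frac{1}{u-\zeta} = -\sum_{j=0}^{2}\frac{u^j}{\zeta^{j+1}} + \frac{u^3}{\zeta^3(u-\zeta)}$ (with $\zeta = z/\nu$), termwise integration against $\theta_0(u)/\pi$ producing $-\sum_{j=0}^{2} \nu^{j+1} b_j/z^{j+1}$, and the remainder controlled via the tail asymptotic $\theta_0(u) = -\sin\phi\, u^{\alpha-5} + O(u^{2(\alpha-5)})$, which bounds $\zeta^{-3}\int_0^\infty u^3\theta_0(u)/(u-\zeta)\,du = O(z^{\alpha-5})$.

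The main technical obstacle is the branch bookkeeping in the keyhole contour: relative to the f.B.m.\ case treated in \cite{ChK}, both the sign of $\sin\phi$ and the $e^{2\pi i s}$ phase picked up by the pole below the cut flip, and either error would negate $b_0$ or spoil the simplification $b_1 = \tfrac12$.
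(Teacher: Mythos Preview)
Your proposal is correct and follows essentially the same route as the paper: invoke \eqref{intfla} with $\alpha\mapsto\alpha-2$ for part (a), read off (b) directly, and for (c) compute the moments $b_k$ by integration by parts plus a change of variables reducing to a Mellin-type integral evaluated by a keyhole contour, then obtain \eqref{inttheta} from the truncated geometric expansion of $1/(s-z)$ and the tail decay $\theta_0(u)\sim u^{\alpha-5}$. The only cosmetic difference is that the paper's substitution lands on the form $\int_0^\infty s^{(k+1)/(5-\alpha)}\big/\bigl(1+(\cot\tfrac{3-\alpha}{2}\pi+s)^2\bigr)\,ds$ rather than your $\int_0^\infty v^s/(v^2-2v\cos\phi+1)\,dv$, but these are equivalent after a linear rescaling and yield the same closed form.
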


\begin{proof} 
The proof is similar to that of Lemma \ref{lem4.2}: identity \eqref{intfla}
with $\alpha$ replaced by $\alpha-2$ gives expression \eqref{Lambdazz} and, in turn, the formulas in b) and c).  
By a change of variables the integrals in \eqref{bifbm2} reduce to 
$$
\frac{1}{\pi}\int_0^\infty u^k \theta_0(u)du=
\frac{\big|\cos \frac{\pi}{2}\alpha \big|^{\frac{k+1}{5-\alpha}}}{\pi (k+1)} \int_0^\infty \frac{s^{\frac{k+1}{5-\alpha}}}{1+\big(\cot \frac{3-\alpha}{2}\pi +s\big)^2} ds
$$
and the formulas claimed in \eqref{bifbm2} are obtained by appropriate contour integration. 
Asymptotics \eqref{inttheta} holds by virtue of the elementary formula 
$$
\frac 1 {s-z} = -\frac 1 z -\frac s {z^2} -\frac {s^2} {z^3} + \frac {s^3}{z^3}\frac 1 {s-z},
$$
since $\theta(t) \sim t^{\alpha-5}$ as $t\to\infty$ and 
$$
\int_0^\infty t^k \theta(t)dt = \nu^{k+1}\int_0^\infty  u^k \theta_0(u)du, \quad k=0,1,2.
$$

\end{proof}

\subsubsection{Removal of singularities}

As in Section \ref{sec512}, removal of singularities in \eqref{phizifbm2} imposes the boundary condition 
$$
\begin{aligned}
&
\Phi_0^+(t) - e^{2i\theta(t)}\Phi_0^-(t) = 2i  e^{-t} e^{i\theta(t)}\sin\theta(t) \Phi_1(-t)
\\
&
\Phi_1^+(t) - e^{2i\theta(t)}\Phi_1^-(t) = 2i e^{-t}  e^{i\theta(t)}\sin\theta(t) \Phi_0(-t)
\end{aligned}\qquad t>0.
$$
 and 
\begin{equation}\label{algalgc}
 \Phi_0(\pm z_0)+e^{\mp z_0}\Phi_1(\mp z_0) = 0.
\end{equation}
It follows from \eqref{phi01ifbm2} that 
\begin{equation}
\label{phi01growthifbm2}
\begin{aligned}
\Phi_0(z) &=  2k_2 z^2 + 2k_3 z^3 + O(z^{\alpha-2}) \\
\Phi_1(z) &= 2k_0 + 2k_1 z + O(z^{\alpha-2}) 
\end{aligned}
\qquad\text{as\ } z\to \infty ,
\end{equation}
where we defined the constants
\begin{equation}\label{k1234}
\begin{aligned}
k_0 & = \phantom{+}\frac 1 2\frac {\lambda\Gamma(\alpha)}{ |c_\alpha|}\psi^{(3)}(1), & k_1 & =- \frac 1 2\frac {\lambda\Gamma(\alpha)}{ |c_\alpha|} \psi''(1)  \\
k_2  & = - \frac 1 2\frac {\lambda\Gamma(\alpha)}{ |c_\alpha|} \psi'(0),   & k_3 &=- \frac 1 2\frac {\lambda\Gamma(\alpha)}{ |c_\alpha|}  \psi(0).
\end{aligned}
\end{equation}
Integrating \eqref{u0psi} we get $2k_0=u_0(1)-u_0(0)$ and  hence \eqref{phi01ifbm2} read
\begin{align*}
\Phi_0(z) &  = \phantom{+}u_0(0)+2k_2 z^2 + 2 k_3 z^3 + z \int_0^\infty \frac{t^{\alpha-2}}{t-z} u(0,t) dt\\
\Phi_1(z) &  = -u_0(0)+2k_1 z - z \int_0^\infty \frac{t^{\alpha-2}}{t-z} u(1,t) dt
\end{align*}
Consequently,   
\begin{equation}\label{Phi_near_origin}
 \Phi_0(z)= u_0(0) + O(z^{\alpha-1}) \quad \text{and}  \quad  \Phi_1(z) = -u_0(0) + O(z^{\alpha-1}) \quad \text{as}\  z \to 0. 
\end{equation}

\subsubsection{An equivalent formulation of the eigenproblem}
The suitable solution of the homogeneous Riemann boundary value problem 
$$
X^+(t) - e^{2i\theta(t)}X^-(t) =0, \quad t\in \Real_{>0}.
$$ 
in this case has the form   
\begin{equation}\label{Xz_ifBm_smallH}
X(z)= z X_c(z)=z\exp \left(\frac 1 \pi \int_0^\infty\frac{\theta(t)}{t-z}dt\right), \quad z\in \mathbb{C}\setminus \Real_{>0}.
\end{equation}
Factor $z$ in front of the exponential is fixed here to guarantee that functions  
\begin{equation}
\label{SDdef-ifBm}
\begin{aligned}
& S(z):= \frac{\Phi_0(z)+\Phi_1(z)}{2X(z)} \\
& D(z):= \frac{\Phi_0(z)-\Phi_1(z)}{2X(z)}
\end{aligned}
\end{equation}
are square integrable at the origin, when restricted to the negative real semiaxis. 
This is indeed the case in view of a priori estimates \eqref{Phi_near_origin} and asymptotics \eqref{Xz_near_zero} of $X(z)$
at the origin, derived along with other useful properties in the following lemma:
 
\begin{lem} 
Function $X(z)$ defined in \eqref{Xz_ifBm_smallH} satisfies 
\begin{equation}\label{Xz_near_zero}
X(z) \sim z^{\frac{\alpha-1}{2}} \quad \text{as}\ z\to 0
\end{equation}
and 
\begin{equation}\label{Xzgrowth}
X(z) \simeq z - \nu b_0 +  \Big( \frac{1}{2} b_0^2-b_1 \Big)   \frac{\nu^2}{z}
-  \Big( \frac{1}{6} b_0^3-b_0b_1 + b_2\Big)\frac{\nu^3}{z^2}  \quad \text{as}\ z \to \infty,
\end{equation}
where $b_{j,\alpha}$ are the constants defined in \eqref{bifbm2}. 
Moreover, $X_0(z):=X_c(\nu z)$ satisfies 
\begin{equation}\label{X0ifBm2}
\arg\big\{X_0(i)\big\} = \frac {3-\alpha}8\pi\quad \text{and}\quad |X_0(i)| = \sqrt{\frac{5-\alpha}{2}}.
\end{equation}

\end{lem}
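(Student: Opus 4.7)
The plan is to analyze the Cauchy-type exponential $X_c(z) = \exp F(z)$ where $F(z) := \frac{1}{\pi}\int_0^\infty \frac{\theta(t)\,dt}{t-z}$; the three parts of the lemma describe the behavior of $X(z) = zX_c(z)$ near $z=0$, near $z=\infty$, and at the specific point $z=i\nu$.

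For part (a), I would extract the leading singular behavior of $F(z)$ at the origin by splitting off the constant $\theta(0+) = \tfrac{3-\alpha}{2}\pi$ on a small neighborhood of the origin: the remaining H\"older density contributes only a bounded term, while the constant part produces the logarithmic singularity $-\tfrac{3-\alpha}{2}\log(-z)$. This gives $X_c(z) \sim z^{-(3-\alpha)/2}$ and $X(z) = zX_c(z) \sim z^{(\alpha-1)/2}$. For part (b), apply the large-$|z|$ expansion \eqref{inttheta} of Lemma \ref{lem5.2}(c) to write $F(z) = -\nu b_0/z - \nu^2 b_1/z^2 - \nu^3 b_2/z^3 + O(z^{\alpha-5})$, then expand $\exp F(z) = 1 + F + F^2/2 + F^3/6 + \cdots$ and collect coefficients of $1/z^k$ for $k=0,1,2$; this produces exactly the combinations stated in \eqref{Xzgrowth}, and multiplication by $z$ yields the claimed asymptotic form.

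For part (c), the substitution $t=u\nu$ gives $\log X_0(i) = \frac{1}{\pi}\int_0^\infty \frac{\theta_0(u)\,du}{u-i}$, whose real and imaginary parts equal $\log|X_0(i)|$ and $\arg X_0(i)$ respectively: explicitly, $\arg X_0(i) = \frac{1}{\pi}\int_0^\infty \frac{\theta_0(u)\,du}{u^2+1}$ and $\log|X_0(i)| = \frac{1}{\pi}\int_0^\infty \frac{u\,\theta_0(u)\,du}{u^2+1}$. Using the factorization $\Lambda^\pm(u\nu) \propto u^{5-\alpha}-e^{\pm(1-\alpha)\pi i/2}$ from Lemma \ref{lem5.2}, one relates $2i\theta_0(u)$ to the jump across $\Real_{>0}$ of $\log(z^{5-\alpha}-e^{(1-\alpha)\pi i/2})$ (modulo a continuous lift), and then deforms the contour into the upper and lower half-planes collecting residues at $z=\pm i$. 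A key algebraic observation is $i^{5-\alpha} = e^{(1-\alpha)\pi i/2}$ (since $e^{2\pi i}=1$), so $z=i$ is a root of $z^{5-\alpha}-e^{(1-\alpha)\pi i/2}$; this identifies the distinguished residue that produces the closed-form values.

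The main obstacle lies in part (c): correctly tracking the continuous lift of $\theta_0$ (which, per Lemma \ref{lem5.2}, ascends from $\tfrac{3-\alpha}{2}\pi$ at $u=0^+$ to $0$ at $u=\infty$) together with the logarithmic branches of $\log(z^{5-\alpha}-e^{(1-\alpha)\pi i/2})$ along the deformed contour requires care, especially because $z=i$ is itself a branch point. The computation parallels Lemma 5.5 in \cite{ChK}, where the analogous constant for the f.B.m. (with exponent $3-\alpha$ in place of $5-\alpha$) yielded $\sqrt{(3-\alpha)/2}$ and $\tfrac{1-\alpha}{8}\pi$; the shift from $3-\alpha$ to $5-\alpha$ forced by the double integration of the underlying process is precisely what produces the constants $\sqrt{(5-\alpha)/2}$ and $\tfrac{3-\alpha}{8}\pi$ claimed here.
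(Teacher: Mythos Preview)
Your approach to parts (a) and (b) is identical to the paper's: the behavior at the origin comes from $X_c(z)\sim z^{-\theta(0+)/\pi}$ with $\theta(0+)=\tfrac{3-\alpha}{2}\pi$, and the large-$z$ expansion follows from \eqref{inttheta} together with the Taylor series of the exponential.

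For part (c), your route is correct but differs from the paper's. You propose to compute the two real integrals $\tfrac{1}{\pi}\int_0^\infty \theta_0(u)(u^2+1)^{-1}du$ and $\tfrac{1}{\pi}\int_0^\infty u\,\theta_0(u)(u^2+1)^{-1}du$ directly by contour deformation, tracking the branches of $\log\bigl(z^{5-\alpha}-e^{(1-\alpha)\pi i/2}\bigr)$. The paper instead invokes two general identities from Lemma~5.5 of \cite{ChK}:
\[
\arg\{X_c(i\nu)\}=\frac{\theta_0(0+)}{4},\qquad
|X_c(i\nu)|^2=\frac{|c_\alpha|}{\lambda\Gamma(\alpha)}\lim_{z\to z_0}\frac{z^4\Lambda(z)}{z^2-z_0^2},
\]
which hold for any $\theta$ of this structural form. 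The first gives the argument immediately from $\theta_0(0+)=\tfrac{3-\alpha}{2}\pi$; the second reduces the modulus to an elementary limit computed from the explicit formula \eqref{Lambdazz}. The paper's approach is shorter and bypasses the branch-tracking difficulty you flag: the modulus identity comes from the observation that $X_c(z)X_c(-z)/\Lambda(z)$ has no jump across $\Real_{>0}$ and hence is rational, so no contour integral for $|X_c(i\nu)|$ need be evaluated at all. Your direct calculation would also work and is more self-contained, but is essentially a re-derivation of those \cite{ChK} identities in this particular case.
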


\begin{proof}
The growth estimate \eqref{Xz_near_zero} holds since  
$$
X_c (z) = \exp \left(\frac 1 \pi \int_0^\infty\frac{\theta(t)}{t-z}dt\right) \sim z^{-\theta(0+)/\pi}  \quad\text{as\ } z\to 0
$$
and $\theta(0+)= \frac {3-\alpha}{2}\pi$ (see Lemma \ref{lem5.2} (c)).  
The asymptotics at infinity is obtained from \eqref{inttheta} and the Taylor expansion of exponential. 
Formulas \eqref{X0ifBm2} follow from the identities    
$$
\arg\{X_c(i\nu)\} = \frac {\theta_0(0+)} 4
\quad
\text{and}
\quad 
|X_c(i\nu)|^2 =  \frac{|c_\alpha|}{\lambda \Gamma(\alpha)} \lim_{z\to z_0} 
\frac{z^4 \Lambda(z)}{ z^2-z_0^2  }
$$
proved in Lemma 5.5 \cite{ChK}.  

\end{proof}

Due to a priori estimates \eqref{phi01growthifbm2} and \eqref{Xzgrowth}, functions $S(z)$ and $D(z)$ 
satisfy equations (c.f. \eqref{SD})
$$
\begin{aligned}
& 
S(z) = \phantom{+}\frac 1 \pi \int_0^\infty \frac{h(t)e^{-t}}{t-z}S(-t) dt + P_S(z) \\
&
D(z) = -\frac 1 \pi \int_0^\infty \frac{h(t)e^{-t}}{t-z}D(-t)dt + P_D(z)
\end{aligned}
$$
with $h(t)$ defined as in \eqref{hdefine} and polynomials, whose degrees do not exceed 2
$$ 
P_S(z) = l_0 + l_1 z +l_2 z^2  \quad \text{and} \quad
P_D(z) = m_0 + m_1 z +m_2 z^2.
$$
By definition \eqref{SDdef-ifBm} and estimate \eqref{phi01growthifbm2}  
\begin{equation}\label{matchme1}
S(z) X(z) =   \frac 1 2 \Big(\Phi_0(z) +   \Phi_1(z)\Big) = k_0 +  k_1 z + k_2 z^2 +  k_3 z^3  + o(1), \quad z\to\infty.
\end{equation}
On the other hand,  \eqref{SD} implies 
\begin{equation}\label{matchme2}
S(z) X(z) = \Big(P_S(z) + k_S z^{-1} + o(z^{-1})\Big)X(z), \quad z\to\infty,
\end{equation}
where we defined 
\begin{equation}\label{kSdef}
k_S  :=  \lim_{z\to\infty} \frac{z}{\pi}\int_0^\infty \frac{h(t)e^{-t}}{t-z}S(-t)dt.
\end{equation}
Plugging expansion \eqref{Xzgrowth} into \eqref{matchme2} and matching the powers with \eqref{matchme1}, 
we obtain the relations 
\begin{equation}\label{lm123a}
\begin{aligned}
&
l_2 =k_3 
\\
&
l_1  =k_2+\nu k_3 b_0  
\\
&
l_0  =k_1+\nu k_2 b_0 +\nu^2 k_3 \sigma_1 
\\
& k_S 
=   k_0+\nu k_1 b_0 +\nu^2 k_2 \sigma_1 +\nu^3 k_3\sigma_2,
\end{aligned}
\end{equation}
where we defined  
$$
\sigma_1 = \frac{1}{2}  b_0^2 + b_1\quad \text{and}\quad 
\sigma_2 = \frac{1}{6}  b_0^3 + b_0 b_1 + b_2.
$$
Analogous calculations for $D(z)X(z)$ give 
\begin{equation}\label{lm123b}
\begin{aligned} 
& 
m_2   = k_3 \\ 
&
m_1   = k_2 + \nu k_3   b_0
\\
&
m_0   =  -  k_1+ k_2 \nu b_0+ \nu^2 k_3  \sigma_1 
\\
&
k_D    = - k_0 -  k_1\nu b_0  + k_2 \nu^2\sigma_1 +k_3 \nu^3\sigma_2,
\end{aligned}
\end{equation}
with the constant   
\begin{equation}\label{kDdef}
k_D   :=-\lim_{z\to\infty} \frac{z}{\pi}\int_0^\infty \frac{h(t)e^{-t}}{t-z}D(-t)dt.
\end{equation}

Consider now the integral equations 
\begin{equation}\label{p_pm_j}
p^\pm_j (t) = \pm \frac 1 \pi \int_0^\infty \frac{h_0(s)e^{-\nu s}}{s+t} p^\pm_j(s)ds+t^j, \quad t>0, \quad j\in \{0,1,2\},
\end{equation}
where $h_0(s):= h(s\nu)$ with $h(s)$ being defined as in \eqref{hdefine}.
As in Lemma \ref{lem5.5} the operator in the right hand side is contracting on $L^2(0,\infty)$ for all $\nu$ 
large enough. Consequently these equations have unique solutions, such that functions $p^\pm_j(t)-t^j$ belong to $L^2(0,\infty)$.
Since $S(-t)$ and $D(-t)$ are a priori square integrable at the origin, by linearity 
\begin{align}
\nonumber 
S(z\nu) =\, &  l_0 p^+_0(-z) - l_1 \nu p^+_1(-z) + l_2 \nu^2 p^+_2(-z)=\\
&
\nonumber
\Big(k_1+\nu k_2 b_0 +\nu^2  k_3 \sigma_1\Big) p^+_0(-z) 
-  \Big(\nu k_2+\nu^2 k_3 b_0 \Big) p^+_1(-z) +  \nu^2 k_3 p^+_2(-z) \\
\label{Sifbm2}
D( z\nu) =\, &
m_0 p^-_0(-z) - m_1\nu p^-_1(-z) +m_2 \nu^2p^-_2(-z) =\\
&
\nonumber
\Big(
-  k_1+ \nu  k_2 b_0+ \nu^2 k_3  \sigma_1
\Big) p^-_0(-z) - \Big(\nu k_2 + \nu^2 k_3   b_0\Big) p^-_1(-z) +\nu^2k_3 p^-_2(-z)
\end{align}
where we substituted \eqref{lm123a}-\eqref{lm123b} and extended the domain of $p^\pm_j(z)$ to the cut plane 
by replacing $t$ with $z\in \mathbb{C}\setminus \Real_{<0}$ in \eqref{p_pm_j}. 
Combining the definitions of $S(z)$ and $D(z)$  with \eqref{Sifbm2} we get  
\begin{equation}\label{phi01ifbm22}
\begin{aligned}
\Phi_0(\nu z) /X(\nu z) &= k_1\xi_1(-z) +\nu k_2 \xi_2(-z) +\nu^2 k_3 \xi_3(-z)  \\
\Phi_1(\nu z)/X(\nu z)  & =  k_1\eta_1(-z) +\nu k_2\eta_2(-z) +\nu^2k_3 \eta_3(-z)
\end{aligned}
\end{equation}
where   $a^\pm_j(z) := p^+_j(z)-p^-_j(z)$ and 
\begin{equation}\label{xieta_ifBm}
\begin{aligned}
\xi_1(z) & := a^-_0(z) & \eta_1(z) & :=  a^+_0(z) \\
\xi_2(z) & :=   b_0  a^+_0(z) -    a^+_1(z) & \eta_2(z) & :=   b_0 a^-_0(z) -   a^-_1(z) \\
\xi_3(z) & := \sigma_1  a^+_0(z) -  b_0 a^+_1(z)+  a^+_2(z)
&
\eta_3(z) & := \sigma_1 a^-_0(z) -  b_0 a^-_1(z)+  a^-_2(z).
\end{aligned}
\end{equation}
Now plugging these expressions into \eqref{algalgc} we obtain 
\begin{equation}
\label{k123}
  k_1\gamma_1+\nu  k_2 \gamma_2+\nu^2 k_3 \gamma_3  = 0
\end{equation}
with  
\begin{equation}\label{gammaphi}
\gamma_j   =  \eta_j(i)+e^{\phi_\nu i}\xi_j(-i)\quad \text{and} \quad \phi_\nu  = \nu  + 2\arg\{X(i\nu)\}.
\end{equation} 
Since $k_j$'s are real and $\gamma_j$'s 
have nontrivial imaginary parts, \eqref{k123} furnishes two equations with real coefficients. 
An additional third equation can be obtained as follows. By definitions \eqref{kSdef} and \eqref{kDdef}, we have  
\begin{equation}\label{kSkD}
\begin{aligned}
k_S = & - \nu l_0 c_{S,0} + \nu^2 l_1 c_{S,1} - l_2 \nu^3 c_{S,2} \\
k_D = & -\nu m_0 c_{D,0}+\nu^2  m_1c_{D,1}-\nu^3 m_2 c_{D,2}
\end{aligned}
\end{equation}
where   
\begin{align*}
c_{S,j} & := - \lim_{z\to\infty} \frac{z}{\pi}\int_0^\infty \frac{h_0(t)e^{-\nu t}}{t-z} p^+_j (t)dt=
 \frac{1}{\pi}\int_0^\infty  h_0(t)e^{-\nu t}  p^+_j (t)dt \\
c_{D,j} & := \lim_{z\to\infty} \frac{z}{\pi}\int_0^\infty \frac{h_0(t)e^{-\nu t}}{t-z}p^-_j(t)dt = 
 -\frac{1}{\pi}\int_0^\infty  h_0(t)e^{-\nu t}  p^-_j(t)dt.
\end{align*}
Plugging \eqref{kSkD} into the equations \eqref{lm123a} and   \eqref{lm123b} we obtain 
$$
\begin{aligned}
k_0 +\nu k_1 \big(b_0 +   c_{S,0}\big)  +\nu^2 k_2  &\big(\sigma_1 +    b_0 c_{S,0} -     c_{S,1}  \big)
+ \\
&
\nu^3 k_3\big(\sigma_2+    \sigma_1 c_{S,0}  -  b_0   c_{S,1} +   c_{S,2} \big)=0 \\  
k_0 +  \nu k_1 \big(b_0  + c_{D,0} \big)   +\nu^2 k_2   &\big(  - \sigma_1 -  b_0c_{D,0}+  c_{D,1} \big)
 + \\
&
\nu^3  k_3 \big( - \sigma_2 -   \sigma_1 c_{D,0}   +   b_0c_{D,1} - c_{D,2} \big) = 0
\end{aligned}
$$
which upon subtraction yield 
\begin{multline}\label{addeq}
 k_1 \big( c_{S,0} - c_{D,0} \big) 
+\nu k_2   \big(2\sigma_1 +    b_0 (c_{S,0}+  c_{D,0}) -     (c_{S,1}    +  c_{D,1})\big)+  \\
\nu^2 k_3\big(2\sigma_2+    \sigma_1 (c_{S,0} +   c_{D,0})
 -  b_0   (c_{S,1}+c_{D,1}) +   (c_{S,2}    + c_{D,2}) \big)  
=0 
\end{multline}
Thus  \eqref{k123} and \eqref{addeq} form a system of three linear equations for $k_1,\nu k_2, \nu^2 k_3$, whose coefficients are real 
valued and are functions of $\nu$. Letting $M(\nu)$ be the matrix of coefficients, this system admits a nontrivial solution if and only if $\nu$ 
satisfies the algebraic equation 
\begin{equation}
\label{detMeq}
\det\{M(\nu)\}=0.
\end{equation}

In summary, we arrive at the following equivalent formulation of the eigenproblem:

\begin{lem}\label{lem6.11}
Let $(p^\pm_0, p^\pm_1, p^\pm_2, \nu)$ with $\nu>0$ be a solution of the system, which consists of the integral equations
\eqref{p_pm_j}  and the algebraic equations \eqref{detMeq}. Let $\varphi$ be defined by the Laplace transform, given by the 
formula \eqref{phizifbm2}, where $\Phi_0(z)$ and $\Phi_1(z)$ are given by \eqref{phi01ifbm22} and let $\lambda$ be defined 
by \eqref{lambdanuifbm2}. Then the pair $(\lambda, \varphi)$ solves the eigenproblem \eqref{eigifBm}. Conversely, any 
solution $(\lambda, \varphi)$ of  \eqref{eigifBm} defines a solution to the above integro-algebraic system. 
\end{lem}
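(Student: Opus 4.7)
The proof establishes the equivalence by reversing the construction carried out in the preceding subsections. For the forward direction, all the necessary steps have already been assembled: starting from \eqref{eigifBm}, the Laplace transform representation \eqref{phizifbm2} was derived, and the auxiliary functions in \eqref{phi01ifbm2} were shown to be sectionally holomorphic on $\mathbb{C}\setminus\mathbb{R}_{>0}$. Since $\widehat\varphi(z)$ must be entire, both the branch cut of $1/\Lambda(z)$ and the poles at $\pm z_0 = \pm i\nu$ are removable; the former yields a jump condition for $\Phi_0, \Phi_1$ across $\mathbb{R}_{>0}$, and the latter gives the algebraic constraint \eqref{algalgc}. The associated homogeneous Riemann problem is resolved by the canonical function $X(z)$ of \eqref{Xz_ifBm_smallH}; passing to $S, D$ via \eqref{SDdef-ifBm} and applying the Sokhotski--Plemelj formula yields Cauchy-type integral representations completed by polynomials $P_S, P_D$ of degree at most two. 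Matching these polynomials against the a priori expansions \eqref{matchme1}-\eqref{matchme2} using \eqref{Xzgrowth} produces the identifications \eqref{lm123a}-\eqref{lm123b}. Substituting into \eqref{algalgc} produces \eqref{k123}, while eliminating $k_0$ from \eqref{lm123a}-\eqref{lm123b} via the auxiliary constants $k_S, k_D$ gives \eqref{addeq}; together they form the linear system in $(k_1, \nu k_2, \nu^2 k_3)$ whose solvability requires \eqref{detMeq}, and the representation \eqref{phi01ifbm22} follows from \eqref{Sifbm2}.

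For the converse direction, I would start from a solution $(p^\pm_0, p^\pm_1, p^\pm_2, \nu)$ of the integro-algebraic system, choose a nontrivial vector $(k_1, \nu k_2, \nu^2 k_3)$ in the kernel of $M(\nu)$ guaranteed by \eqref{detMeq}, and define $\Phi_0, \Phi_1$ by \eqref{phi01ifbm22} with $\lambda$ given by \eqref{lambdanuifbm2}. The construction of $X(z)$ guarantees that $\Phi_0, \Phi_1$ inherit the Riemann jump condition, so that the quotient $(\Phi_0(z)+e^{-z}\Phi_1(-z))/\Lambda(z)$ extends continuously across $\mathbb{R}_{>0}$; the algebraic equation \eqref{k123}, which is the first row of the linear system, ensures that this quotient has no poles at $\pm z_0$. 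Hence the right-hand side of \eqref{phizifbm2} is entire. The growth control provided by \eqref{phi01growthifbm2} against $\Lambda(z)\sim z^2$ permits Laplace inversion by contour integration on the imaginary axis, performed exactly as in the proof of Lemma \ref{lemeigf}, yielding a candidate function $\varphi(x)$ supported on $[0,1]$. Reversing the chain of identities that led from \eqref{eigifBm} to \eqref{phizifbm2}---the substitution $\varphi=\psi''$, the two integrations by parts that produced \eqref{geneig_ifBm2}, and the identification of boundary values with the polynomial parts of \eqref{phi01ifbm2}---then shows that $\varphi$ satisfies \eqref{eigifBm}.

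The main obstacle is verifying that all four boundary conditions $\psi(1)=\psi'(1)=\psi''(0)=\psi^{(3)}(0)=0$ of the auxiliary generalized eigenproblem \eqref{geneig_ifBm2} are indeed recovered from the converse construction. The vanishing of $\psi$ and $\psi'$ at $x=1$ is enforced by the $e^{-z}$ factor in the $\Phi_1$ term together with the matching of its polynomial part at infinity, while $\psi''(0)=\psi^{(3)}(0)=0$ come from matching the polynomial part of $\Phi_0$; the auxiliary equation \eqref{addeq}, which is built from the contour limits $k_S, k_D$, is precisely the consistency condition that ties these four constraints together through the three free parameters $(k_1, \nu k_2, \nu^2 k_3)$. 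This multi-boundary-condition bookkeeping---absent in the fractional Ornstein--Uhlenbeck setting, which involves only a second-order generalized operator with two boundary conditions---is the structural reason why Theorem \ref{thm-ifBm} requires the three moments $b_0, b_1, b_2$ of Lemma \ref{lem5.2}(c) and ultimately produces the intricate second-order term \eqref{DeltaH}.
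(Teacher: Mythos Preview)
Your proposal is correct and follows the same approach as the paper. In fact, the paper states Lemma~\ref{lem6.11} purely as a summary of the preceding derivation and gives no separate proof environment at all; your write-up simply fleshes out both directions, in particular the converse, more explicitly than the paper does. One small remark: when you invoke Laplace inversion ``exactly as in the proof of Lemma~\ref{lemeigf}'', the parallel computation in this section is the one producing formula~\eqref{eigfunifBm}, so that would be the more apt cross-reference.
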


\subsubsection{Properties of the integro-algebraic system} 
As mentioned above, equations \eqref{p_pm_j} have unique solutions, such that $p^\pm_j(t)-t^j$ belong to $L^2(0,\infty)$. 
Asymptotic analysis of the integro-algebraic system of Lemma \ref{lem6.11} is based on the following estimates, derived as in Lemma 5.7 \cite{ChK}:

\begin{lem}\label{lem6.12}
For any $\alpha_0 \in (1,2)$ there exist constants $\nu'$ and $C$, such that for all $\nu \geq \nu'$, $\alpha \in [1,\alpha_0]$ 
the following estimates hold:
\begin{align*}
& \big|p^\pm_j(i)-i^j\big| \leq C \nu^{-(j+1)}\\
& \big|p^\pm_j(\tau)-\tau^j\big| \leq C \nu^{-(j+1)} \tau^{-1},\quad \tau>0\\
& |c_{S,j}|\vee |c_{D,j}| \le C \nu^{-(j+1)}.
\end{align*}
\end{lem}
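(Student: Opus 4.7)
The plan is to follow closely the strategy of Lemma 5.7 in \cite{ChK}, relying on the contractivity of the integral operator
$(Af)(t) := \frac{1}{\pi}\int_0^\infty \frac{h_0(s) e^{-\nu s}}{s+t} f(s)\,ds$
associated with \eqref{p_pm_j}. The integrated f.B.m. analogue of Lemma \ref{lem5.5}, alluded to in the paragraph preceding the statement, supplies constants $\nu'>0$ and $\eps>0$ such that $\|A\|_{L^2\to L^2}\le 1-\eps$ for every $\nu\ge \nu'$ and $\alpha\in[1,\alpha_0]$, with $h_0$ uniformly bounded on this range by some $C_h$. Setting $q^\pm_j := p^\pm_j - t^j$, equation \eqref{p_pm_j} rewrites as $(I\mp A) q^\pm_j = \pm A(t^j)$, with the inverse uniformly bounded on $L^2(0,\infty)$ by $1/\eps$.

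First, I would derive pointwise bounds for $A(t^j)$ directly from the representation
$$A(t^j)(z) = \frac{1}{\pi}\int_0^\infty \frac{h_0(s) s^j e^{-\nu s}}{s+z}\,ds.$$
For $z=\tau>0$, combining $|s+\tau|^{-1}\le \tau^{-1}$ with $\int_0^\infty s^j e^{-\nu s}\,ds = j!\,\nu^{-(j+1)}$ gives $|A(t^j)(\tau)|\le C\nu^{-(j+1)}\tau^{-1}$, while for $z=i$ the bound $|s+i|\ge 1$ yields $|A(t^j)(i)|\le C\nu^{-(j+1)}$. Partitioning the $\tau$-axis at the natural scale $\nu^{-1}$ and combining the previous pointwise estimate on $(\nu^{-1},\infty)$ with the uniform bound $|A(t^j)(\tau)|\le C\nu^{-j}$ on $(0,\nu^{-1})$ (valid for $j\ge 1$; a logarithmic correction appears for $j=0$ but remains $L^2$-integrable) gives $\|A(t^j)\|_{L^2}\le C\nu^{-(j+1/2)}$, and hence $\|q^\pm_j\|_{L^2}\le C\nu^{-(j+1/2)}$ by the contraction. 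I would then upgrade this $L^2$ control to pointwise bounds on $q^\pm_j$ via the identity $q^\pm_j = \pm A(t^j)\pm A(q^\pm_j)$ and Cauchy--Schwarz:
$$|A(q^\pm_j)(z)|\le \frac{C_h}{\pi}\Bigl(\int_0^\infty \frac{e^{-2\nu s}}{|s+z|^2}\,ds\Bigr)^{1/2}\|q^\pm_j\|_{L^2}.$$
For $z=i$ the prefactor is $\le (2\nu)^{-1/2}$, producing $|A(q^\pm_j)(i)|\le C\nu^{-(j+1)}$; for $z=\tau>\nu^{-1}$ it is $\le (2\nu\tau^2)^{-1/2}$, producing $|A(q^\pm_j)(\tau)|\le C\nu^{-(j+1)}\tau^{-1}$; and for $\tau\le \nu^{-1}$ the estimate $(2\tau)^{-1/2}\nu^{-(j+1/2)} = \nu^{-(j+1/2)}\tau^{-1/2}\le \nu^{-(j+1)}\tau^{-1}$ is compatible with the target. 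Combined with the bounds on $A(t^j)$, this yields the two pointwise estimates claimed in the lemma.

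Finally, the bounds on $c_{S,j}$ and $c_{D,j}$ follow by substituting $p^\pm_j = t^j + q^\pm_j$ into their definitions: the leading contribution $\frac{1}{\pi}\int_0^\infty h_0(t) e^{-\nu t} t^j\,dt$ is trivially $O(\nu^{-(j+1)})$, while the remainder is controlled by $\int_0^\infty e^{-\nu t}|q^\pm_j(t)|\,dt$, bounded by $C\nu^{-(j+1)}$ using the pointwise estimates on $q^\pm_j$, with the region $t<\nu^{-1}$ handled by the uniform bound $|q^\pm_j(t)|\le C\nu^{-j}$ obtained above. The main obstacle in this scheme will be propagating a pointwise bound with a $\tau^{-1}$ singularity at the origin through $A$, since the naive estimate $\int_0^\infty e^{-\nu s}/(s+\tau)\,ds$ diverges logarithmically at $\tau=0$; this is precisely why the argument detours through $L^2$ and splits the range of $\tau$ at the scale $\nu^{-1}$, gaining half a power of $\nu$ in each of the two steps. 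This delicate balancing is exactly the one carried out in \cite{ChK}, which I would follow verbatim.
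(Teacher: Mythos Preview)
Your proposal is correct and takes essentially the same approach as the paper: the paper's proof consists entirely of the reference ``derived as in Lemma 5.7 \cite{ChK}'', and what you have written is precisely a faithful unpacking of that argument (contraction on $L^2$, the decomposition $q^\pm_j=p^\pm_j-t^j$, the $L^2$ bound $\|q^\pm_j\|_{L^2}\le C\nu^{-(j+1/2)}$, and the bootstrap to pointwise bounds via Cauchy--Schwarz with the split at scale $\nu^{-1}$).
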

 
\subsubsection{Inversion of the Laplace transform}

The eigenfunctions are recovered from the solution of the integro-algebraic system by inversion of the Laplace transform:  

\begin{lem} 
Let $(\Phi_0,\Phi_1,\nu)$ satisfy the integro-algebraic system introduced in Lemma \ref{lem6.11}, 
then the pair $(\lambda, \varphi)$ with $\lambda$ defined by the formula \eqref{lambdanuifbm2} and the function 
\begin{equation}\label{eigfunifBm}
\begin{aligned}
\varphi(x) =  & 
-
\frac 1 \nu \frac{|c_\alpha|}{\lambda \Gamma(\alpha)} \frac 2 {5   -   \alpha}
\Re\left\{ e^{i\nu x} \Phi_0(i\nu)   \frac { 1}{ i }  
\right\}
+ \\
& \phantom{+} \frac 1 \nu \frac {|c_\alpha|}{\lambda\Gamma(\alpha)} \frac  1  \pi \int_{0}^\infty \frac{ \sin \theta_0(t)}{ \gamma_0(t) }\left( e^{-t\nu (1-x)}\Phi_1(-t\nu ) +e^{-t\nu x}  \Phi_0(-t\nu )\right) dt 
\end{aligned}
\end{equation}
where $\gamma_0(u)=\big|u^2-u^{\alpha-3}e^{\frac{1-\alpha}{2}\pi i}\big|$, solves the eigenproblem \eqref{eigifBm} with $\alpha \in (1,2)$.
\end{lem}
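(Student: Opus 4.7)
The plan is to recover $\varphi$ by inverting the Laplace transform in \eqref{phizifbm2} via contour integration along the imaginary axis, following the blueprint of Lemma \ref{lemeigf} from the fractional Ornstein--Uhlenbeck case. Since $\widehat\varphi(z)$ is entire, the Bromwich contour may be taken on the imaginary axis, and the polynomial terms $\widehat\varphi(0) + z\,\widehat\varphi'(0)$ inverse-transform to distributions supported at $x=0$, so they contribute nothing for $x \in (0,1)$. The remaining integrand decomposes naturally as
\[
f_0(z) := -\frac{\Phi_0(z)}{\Lambda(z)}e^{zx}, \qquad f_1(z) := -\frac{\Phi_1(-z)}{\Lambda(z)}e^{z(x-1)},
\]
where $f_0$ decays on the large semicircle in the left half plane (for $x > 0$) and $f_1$ decays on the large semicircle in the right half plane (for $x < 1$), so the two pieces will be closed in opposite directions.

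First, closing the $f_0$-contour to the left produces two contributions: residues at the zeros $z = \pm i\nu$ of $\Lambda$ (treated by a small indentation of the imaginary contour) and a branch-cut integral along $\mathbb{R}_{<0}$ arising from the jump of $\Lambda(z)$ there, noting that $\Phi_0$ is continuous on $\mathbb{R}_{<0}$. A direct calculation using \eqref{Lambdazz} together with \eqref{lambdanuifbm2} gives
\[
\Lambda'(i\nu) = \frac{\pi}{|\cos(\pi\alpha/2)|}\,\nu^{\alpha-4}(5-\alpha)\,i,
\]
and $\Lambda'(-i\nu) = \overline{\Lambda'(i\nu)}$; combining $\Res(f_0, i\nu) + \Res(f_0, -i\nu)$ then yields exactly the oscillatory term $-\frac{1}{\nu}\frac{|c_\alpha|}{\lambda\Gamma(\alpha)}\frac{2}{5-\alpha}\Re\{e^{i\nu x}\Phi_0(i\nu)/i\}$ of \eqref{eigfunifBm}. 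The branch-cut contribution simplifies via the symmetries \eqref{conjp}--\eqref{absL} and the identity $\Lambda^+(t)/\Lambda^-(t) = e^{2i\theta(t)}$: after the change of variables $t \mapsto t\nu$, the discontinuity of $1/\Lambda$ across $\mathbb{R}_{<0}$ takes the form $2i\sin\theta_0(t)/\gamma_0(t)$, producing the summand of the integral in \eqref{eigfunifBm} involving $e^{-t\nu x}\Phi_0(-t\nu)$.

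Next, closing the $f_1$-contour to the right contributes no new residue: the algebraic constraint \eqref{algalgc} is precisely what ensures $\Res(f_0 + f_1, \pm i\nu) = 0$, so the indentation convention used for $f_0$ is unambiguous. The jump of $1/\Lambda$ across the positive real axis, computed identically with the symmetries of Lemma \ref{lem5.2}, yields the remaining summand of the integral in \eqref{eigfunifBm} involving $e^{-t\nu(1-x)}\Phi_1(-t\nu)$. Collecting all contributions and matching the prefactors gives formula \eqref{eigfunifBm}, while \eqref{lambdanuifbm2} furnishes the claimed expression for $\lambda$.

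The main technical point will be justifying the contour manipulations: showing that the arcs at infinity vanish uniformly in $x$, verifying integrability of the branch-cut integrands at both endpoints (near $0$, using \eqref{Phi_near_origin} and the $X(z) \sim z^{(\alpha-1)/2}$ behaviour; near $\infty$, using the polynomial growth \eqref{phi01growthifbm2} tempered by $1/\Lambda(z) = O(z^{-2})$ together with the exponential factors $e^{-t\nu x}$ and $e^{-t\nu(1-x)}$), and handling the poles $\pm i\nu$ which lie exactly on the Bromwich contour. These are routine once the structural identities $\Lambda^\pm(t) = \overline{\Lambda^{\mp}(t)}$ and \eqref{algalgc} are invoked, but require careful bookkeeping given the polynomial growth of $\Phi_0$.
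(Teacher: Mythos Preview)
Your overall strategy matches the paper's: split into $f_0$ (closed left) and $f_1$ (closed right), collect residues at $\pm i\nu$ from $f_0$, and read off branch-cut integrals from the jump of $1/\Lambda$. The residue computation of $\Lambda'(i\nu)$ is correct, and your observation that \eqref{algalgc} makes the indentation convention around $\pm i\nu$ irrelevant is also correct. But there is a real gap in the arc-at-infinity step.

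You discard the polynomial $\widehat\varphi(0)+z\,\widehat\varphi'(0)$ as a distribution supported at $x=0$ and then take $f_0(z)=-\Phi_0(z)e^{zx}/\Lambda(z)$. By \eqref{phi01growthifbm2} and \eqref{Lambdazz}, however, $\Phi_0(z)/\Lambda(z)=-\psi'(0)-\psi(0)z+O(z^{\alpha-4})$ as $z\to\infty$, so your $f_0e^{-zx}$ \emph{grows linearly} on the large left semicircle and Jordan's lemma fails. You cannot simultaneously drop the polynomial as a $\delta$-contribution \emph{and} close the remaining Bromwich integral by contour deformation: each piece is separately a divergent integral on the imaginary axis, and only their sum is the (entire) transform $\widehat\varphi$. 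The paper resolves this by putting the polynomial back inside $f_0$, setting
\[
f_0(z)=e^{zx}\Big(\psi'(0)+\psi(0)z+\frac{\Phi_0(z)}{\Lambda(z)}\Big),
\]
which is $O(z^{\alpha-4})$ at infinity so the arc genuinely vanishes. Since the polynomial is entire and continuous across $\mathbb R$, this subtraction leaves your residue and jump computations untouched; only the arc estimate changes. With that correction your argument coincides with the paper's.
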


\begin{proof}
As in the proof of Lemma \ref{lemeigf} eigenfunction $\varphi(x)$ satisfies \eqref{phixa}, this time 
with 
$$
f_1(z):= e^{z(x-1)}\frac { \Phi_1(-z)}{\Lambda(z)} \quad \text{and} \quad 
f_0(z):= e^{zx}\left(
\psi^{\prime}(0)+\psi(0)z+\frac { \Phi_0(z)}{\Lambda(z)}
\right).
$$
Equation \eqref{intff} holds with 
\begin{align*}
f_1^+(t) -f_1^-(t) 
&
= -e^{-t(1-x)}\Phi_1(-t) \frac{2i\sin \theta(t)}{\gamma(t)} \\
f_0^-(-t)-f_0^+(-t)
&
=- e^{-tx}  \Phi_0(-t)\frac{2i \sin \theta(t)}{\gamma(t)}
\end{align*}
and $\gamma(t)=|\Lambda^+(t)|$.
The residues are computed, using the explicit expression for $\Lambda(z)$ from Lemma \ref{lem5.2}\,(a):
\begin{align*}
\Res\big(f_0, z_0\big)  
=\, & e^{i\nu x} \Phi_0(i\nu) \frac {|c_\alpha|}{\lambda\Gamma(\alpha)} \frac { 1}{\nu i } \frac 1 {5-\alpha}
\\
\Res\big(f_0, -z_0\big) 
=\, 
& 
  e^{-i\nu x} \Phi_0(-i\nu ) \frac{|c_\alpha|}{\lambda \Gamma(\alpha)} \frac {1 }{ -\nu i}\frac 1 {5 -\alpha}
\end{align*}
and hence
$$
\Res\big(f_0, z_0\big)+ \Res\big(f_0, -z_0\big)  =
\frac 2 \nu \frac 1 {5   -   \alpha}\frac{|c_\alpha|}{\lambda \Gamma(\alpha)}\Re\left\{ e^{i\nu x} \Phi_0(i\nu)   \frac { 1}{ i }  
\right\}.
$$
Assembling all parts together we obtain formula \eqref{eigfunifBm}. 
\end{proof}

\subsubsection{Asymptotic analysis}

The following lemma determines asymptotics of the algebraic part of solutions to the system from Lemma \ref{lem6.11}:

\begin{lem}\label{lem7.7} 
The integro-algebraic system, introduced in Lemma \ref{lem6.11}, has countably many solutions, which can be enumerated so that 
\begin{equation}\label{asym2}
\nu_n=\pi (n-1) -\frac{3-\alpha}{4}\pi+\arctan  \Delta_\alpha + r_n(\alpha)n^{-1}  \quad \text{as}\ n\to\infty
\end{equation}
where 
\begin{equation}\label{Delta_agr1}
\Delta_\alpha=\frac{\frac{1}{3} b_0^3 - b_2}{\frac{1}{4}+\frac{1}{2}b_0^2+b_2b_0-\frac{1}{12}b_0^4}
\end{equation}
and the residual $r_n(\alpha)$ is bounded uniformly in $n$ and $\alpha \in [1,\alpha_0]$ for any $\alpha_0 \in (1,2)$.
\end{lem}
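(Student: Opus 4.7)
The plan is to extract asymptotic expansions for the entries of $M(\nu)$ using Lemma \ref{lem6.12}, then show that the vanishing of the determinant to leading order reduces to a single transcendental equation of the form $\tan\phi_\nu=\Delta_\alpha+O(\nu^{-1})$, from which \eqref{asym2} follows by inverting the relation $\phi_\nu=\nu+2\arg\{X(i\nu)\}$.

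First, I would combine Lemma \ref{lem6.12} with the symmetry $p^{\pm}_j(-i)=\overline{p^{\pm}_j(i)}$ (which holds because $p^{\pm}_j$ is real on $\Real_{>0}$) to obtain
$a^+_j(i)=2i^j+O(\nu^{-(j+1)})$, $a^-_j(i)=O(\nu^{-(j+1)})$, and correspondingly $a^{\pm}_j(-i)=\overline{a^{\pm}_j(i)}$. Plugging these into the definitions \eqref{xieta_ifBm} yields
\begin{align*}
\eta_1(i)&=2+O(\nu^{-1}), & \xi_1(-i)&=O(\nu^{-1}), \\
\eta_2(i)&=O(\nu^{-1}), & \xi_2(-i)&=2b_0+2i+O(\nu^{-1}), \\
\eta_3(i)&=O(\nu^{-1}), & \xi_3(-i)&=2\sigma_1-2+2b_0 i+O(\nu^{-1}),
\end{align*}
with $\sigma_1=\tfrac12 b_0^2+b_1=\tfrac12 b_0^2+\tfrac12$ and $\sigma_2=\tfrac16 b_0^3+b_0 b_1+b_2$. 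The coefficients $\gamma_j$ in \eqref{gammaphi} therefore reduce to explicit trigonometric expressions in $\phi_\nu$, and the estimates $|c_{S,j}|\vee|c_{D,j}|\le C\nu^{-(j+1)}$ turn \eqref{addeq} into $k_1\cdot O(\nu^{-1})+\nu k_2(2\sigma_1+O(\nu^{-1}))+\nu^2 k_3(2\sigma_2+O(\nu^{-1}))=0$.

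Next, I would form the real $3\times 3$ matrix $M(\nu)$ by splitting \eqref{k123} into real and imaginary parts and appending the scaled version of \eqref{addeq}. Expanding $\det M(\nu)$ along the first column (whose only non-negligible entry is $\operatorname{Re}\gamma_1=2+O(\nu^{-1})$) reduces the computation to a $2\times 2$ determinant whose leading part is
\[
8\bigl[(\sigma_2 b_0-\sigma_1^2+\sigma_1)\sin\phi_\nu+(\sigma_2-\sigma_1 b_0)\cos\phi_\nu\bigr]+O(\nu^{-1}).
\]
A short algebraic simplification using $\sigma_1=\tfrac12 b_0^2+\tfrac12$ and the explicit form of $\sigma_2$ gives $\sigma_2-\sigma_1 b_0=-\tfrac13 b_0^3+b_2$ and $\sigma_2 b_0-\sigma_1^2+\sigma_1=\tfrac14+\tfrac12 b_0^2+b_0 b_2-\tfrac{1}{12}b_0^4$, so the equation $\det M(\nu)=0$ becomes precisely $\tan\phi_\nu=\Delta_\alpha+O(\nu^{-1})$ with $\Delta_\alpha$ as in \eqref{Delta_agr1}.

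Finally, since $X(z)=zX_c(z)$ and Lemma \eqref{X0ifBm2} gives $\arg X_c(i\nu)=\tfrac{3-\alpha}{8}\pi+O(\nu^{-k})$, we have $2\arg X(i\nu)=\pi+\tfrac{3-\alpha}{4}\pi+O(\nu^{-k})$, whence $\phi_\nu=\nu+\pi+\tfrac{3-\alpha}{4}\pi+O(\nu^{-k})$. Inverting $\tan\phi_\nu=\Delta_\alpha+O(\nu^{-1})$ and choosing the appropriate branch yields \eqref{asym2} up to a global integer shift; uniform existence and uniqueness for all $n$ large enough is secured as in Lemma \ref{lem5.8}, by combining the contraction estimate of the integral operator (which is inherited from the analog of Lemma \ref{lem5.5}) with a derivative bound $|R'(\nu)|\le C\nu^{-1}$ on the residual, allowing the fixed-point / implicit-function argument to be applied uniformly in $\alpha\in[1,\alpha_0]$. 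The natural enumeration is then recovered by the same continuity-in-$\alpha$ calibration as in Section~5.1.7 of \cite{ChK}. The main technical obstacle is the bookkeeping in step~three: one must track the $O(\nu^{-1})$ corrections through the expansion of the determinant, verify that they do not conspire to wipe out the leading combination, and identify the resulting ratio exactly with $\Delta_\alpha$ from \eqref{Delta_agr1}. Everything else is a reprise of the framework already laid out for the fractional Ornstein--Uhlenbeck process.
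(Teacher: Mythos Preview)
Your proposal is correct and follows essentially the same route as the paper: both compute the leading asymptotics of $\xi_j(-i),\eta_j(i)$ from Lemma~\ref{lem6.12}, assemble the $3\times 3$ matrix $M(\nu)$ from the real and imaginary parts of \eqref{k123} together with \eqref{addeq}, reduce $\det M(\nu)=0$ to $\tan\phi_\nu=\Delta_\alpha+O(\nu^{-1})$ via the same $2\times 2$ minor, and invert using $2\arg X(i\nu)=\pi+\tfrac{3-\alpha}{4}\pi$, with existence and uniqueness handled by the fixed-point argument of Lemma~\ref{lem5.8}. One small imprecision: here $\theta_0$ does not depend on $\nu$ (there is no parameter like $\beta$), so $\arg X_c(i\nu)=\tfrac{3-\alpha}{8}\pi$ is exact rather than up to $O(\nu^{-k})$; also the enumeration alignment is carried out separately after this lemma, not as part of it.
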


\begin{proof}
Definition \eqref{xieta_ifBm} and the estimates from Lemma \ref{lem6.12} imply  
\begin{align*}
\xi_1(i) & \simeq 0 & \eta_1(i) & \simeq   2 \\
\xi_2(i) & \simeq   2 b_0    -    2i & \eta_2(i) & \simeq   0\\
\xi_3(i) & \simeq \sigma_1  2 -  b_0 2i   -2
&
\eta_3(i) & \simeq 0
\end{align*}
where $\simeq$ stands for equality up to $O(\nu^{-1})$ residual, uniform with respect to $\alpha\in [1,\alpha_0]$. 
Hence $\gamma_j$'s from \eqref{gammaphi} satisfy 
\begin{align*}
\gamma_1   &\simeq   2  \\
\gamma_2   &\simeq  2 e^{\phi_\nu i}  ( b_0    +     i) \\
\gamma_3   &\simeq   2 e^{\phi_\nu i}(\sigma_1  -1  +  b_0  i   )  
\end{align*}
where $\phi_\nu$ is defined in \eqref{gammaphi} and, in view of \eqref{addeq} and \eqref{k123} the  matrix in \eqref{detMeq} 
satisfies   
\begin{equation}\label{Mnu2}
M(\nu) \simeq 
2\begin{pmatrix}
1  &  b_0 \cos \phi_\nu - \sin\phi_\nu &  (\sigma_1  -1) \cos \phi_\nu   -   b_0  \sin\phi_\nu
\\
0 & \cos \phi_\nu + b_0 \sin\phi_\nu  &     b_0 \cos \phi_\nu  +(\sigma_1  -1)\sin\phi_\nu    \\
0 &   \sigma_1  &  \sigma_2
\end{pmatrix}
\end{equation}
Consequently  
$$
\det\{M(\nu)\} \simeq \big(\cos \phi_\nu + b_0 \sin\phi_\nu\big)\sigma_2-
\big( b_0 \cos \phi_\nu  +(\sigma_1  -1)\sin\phi_\nu\big) \sigma_1.
$$
Hence the root of \eqref{detMeq} satisfies 
\begin{align*}
\tan\phi_\nu &
\simeq \frac {\sigma_2 -b_0\sigma_1}
{
 (\sigma_1  -1)\sigma_1   -  b_0\sigma_2 
}
=
\frac 
{
-\frac{1}{3}  b_0^3   + b_2  
}
{
  \frac{1}{12}  b_0^4+ b_1^2 -  \frac{1}{2}  b_0^2 - b_1     - b_2b_0
} \\
&
=\frac 
{
 \frac{1}{3}  b_0^3   - b_2  
}
{
 \frac 1 4 +  \frac{1}{2}  b_0^2 + b_2b_0 -\frac{1}{12}  b_0^4 
} =: \Delta_\alpha.
\end{align*}
A lengthy but otherwise direct calculation shows that the residual in this equality is differentiable with respect to $\nu$ and its derivative 
is less than 1 in magnitude for all $\nu$ large enough. Hence for all sufficiently large integer $n$ the integro-algebraic 
system has the unique solution, obtained through fixed-point iterations, and  its algebraic part $\nu_n$ satisfies 
$$
\nu_n  = \pi n -2\arg\{i X_0(i)\} + \arctan \Delta_\alpha + r_n n^{-1}\quad n\to\infty
$$  
where $r_n$ is a sequence, uniformly bounded in $n$ and $\alpha\in [1,\alpha_0]$. Asymptotics \eqref{asym2} 
now follows from \eqref{X0ifBm2}.   
\end{proof}

The following lemma derives asymptotic approximation for the eigenfunctions: 

\begin{lem}\label{lem6.88}
Under the enumeration, introduced by Lemma \ref{lem7.7}, the eigenfunctions admit the approximation:
\begin{multline}\label{phinflaifBm2}
\varphi_n(x) =  
\sqrt{2} \cos \Big(\nu_n x + \frac {3-\alpha}8\pi - \arctan \Delta_\alpha\Big)
 \\       
-\frac  {\sqrt{5-\alpha}}  \pi \int_{0}^\infty \rho_0(t)\left( 
Q_0(t) e^{-t\nu_n x} 
+ (-1)^n Q_1(t) e^{-t\nu_n (1-x)} \right) dt + r_n(x)n^{-1}
\end{multline}
where the residual $r_n(x)$ is uniformly bounded in both $n\in \mathbb{N}$ and $x\in [0,1]$ and 
\begin{equation}
\label{rho0ifBm2}
\rho_0(t) = \frac{ \sin \theta_0(t)}{ \gamma_0(t) }t \exp \left(\frac 1 \pi \int_0^\infty\frac{\theta_0(s)}{s +t }ds \right)
\end{equation}
and 
\begin{equation}\label{QifBm2}
\begin{aligned}
Q_0(t) := & \frac {\Delta_\alpha}
{ 
 \sqrt{ \Delta_\alpha^2 +1} 
}
\frac {\sigma_1 }{ \sigma_2  -b_0\sigma_1      }
\Big(      \frac{\sigma_2}{\sigma_1}b_0 -  \sigma_1
+  \big(b_0    -   \frac{\sigma_2}{\sigma_1}\big) t 
    -   t^2 
\Big) \\
Q_1(t) := & 1
\end{aligned}
\end{equation} 
Moreover,
\begin{equation}
\label{funphin}
\begin{aligned}
\varphi_n(1)  &= 
 -  (-1)^n   \sqrt{5-\alpha} \big( 1+O(n^{-1})\big)\\
\int_0^1\varphi_n(x)dx  & =  
\nu_n^{-1}   \sqrt{5   -   \alpha} \frac {\Delta_\alpha}
{ 
 \sqrt{ \Delta_\alpha^2 +1} 
}
\frac {\sigma_2 }{ \sigma_2  -b_0\sigma_1      } \big( 1+O(n^{-1})\big).
\end{aligned}
\end{equation}
\end{lem}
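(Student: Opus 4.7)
The plan is to derive the asymptotic expansion of $\varphi_n$ from the explicit inversion formula \eqref{eigfunifBm}, whose three constituents are the residue term at $z=\pm z_0$ and the two Laplace-type integrals along the positive semiaxis. All three depend on $\Phi_0, \Phi_1$, which through \eqref{phi01ifbm22} are linear combinations of $\xi_j(\mp z), \eta_j(\mp z)$ weighted by the unknown coefficients $k_1,\nu k_2,\nu^2 k_3$. First I would solve the homogeneous linear system $M(\nu)(k_1,\nu k_2,\nu^2 k_3)^\top=0$ (whose determinant vanishes by Lemma \ref{lem7.7}) asymptotically, using the leading form \eqref{Mnu2}. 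Substituting the asymptotic value $\tan\phi_\nu\simeq \Delta_\alpha$ from the proof of Lemma \ref{lem7.7}, the cofactors of $M(\nu)$ yield the ratios $k_1:\nu k_2:\nu^2 k_3$ up to $O(\nu^{-1})$ errors; these ratios are the source of the constants appearing in $Q_0$ and of the cosine phase in \eqref{phinflaifBm2}.

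Next I would evaluate $\Phi_0(i\nu)$ using the just-derived ratios together with Lemma \ref{lem6.12}, which says $a^\pm_j(i)=p^+_j(i)-p^-_j(i)\simeq 2 i^j$ up to $O(\nu^{-(j+1)})$. Combined with \eqref{X0ifBm2}, this gives $e^{i\nu x}\Phi_0(i\nu)/i \simeq \sqrt{(5-\alpha)/2}\,e^{i(\nu x+\phi_0)}\cdot(\text{scalar})$ with $\phi_0=\tfrac{3-\alpha}{8}\pi-\arctan\Delta_\alpha$ after using $\arg\{iX_0(i)\}=\tfrac\pi 2+\tfrac{3-\alpha}{8}\pi$ and $\phi_\nu=\nu_n+2\arg\{X_0(i)\}$. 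Taking real parts and absorbing the prefactor $\nu^{-1}|c_\alpha|/(\lambda\Gamma(\alpha))\cdot 2/(5-\alpha)$ via \eqref{lambdanuifbm2} produces the oscillatory cosine in \eqref{phinflaifBm2}, up to an overall unknown scalar factor, which is fixed at the end by normalizing to unit $L^2(0,1)$-norm.

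For the boundary-layer integrals in \eqref{eigfunifBm}, I would replace $\Phi_0(-t\nu),\Phi_1(-t\nu)$ by the leading parts of \eqref{phi01ifbm22}, in which $p^\pm_j(t)\simeq t^j$ by Lemma \ref{lem6.12} and $X(-t\nu)\simeq -t\nu X_c(-t\nu)$ by \eqref{Xz_ifBm_smallH}. The integrals thus reduce, at leading order, to $\int_0^\infty \rho_0(t) P(t)e^{-t\nu x}dt$ (and the mirror version), where $\rho_0$ is the function in \eqref{rho0ifBm2} obtained from $\sin\theta_0/\gamma_0\cdot X_c(-t)$ times the $-t$ factor from $X$. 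The polynomial $P(t)$ is determined by the same ratios $k_1:\nu k_2:\nu^2 k_3$ that we computed in Step 1; carrying out the algebra with $\sigma_1=\tfrac12 b_0^2+b_1$, $\sigma_2=\tfrac 16 b_0^3+b_0b_1+b_2$ and simplifying via $\sigma_2-b_0\sigma_1$ (the denominator of $\tan\phi_\nu$) yields exactly the $Q_0(t)$ and $Q_1(t)\equiv 1$ of \eqref{QifBm2}. This is the most technically delicate step, since one must group the terms carefully to recognize the factor $\Delta_\alpha/\sqrt{1+\Delta_\alpha^2}$ coming from the normalization of the phase $\phi_\nu\simeq \arctan\Delta_\alpha$.

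Finally, \eqref{funphin} follows from the same formula \eqref{eigfunifBm} specialized to $x=1$ (where the oscillatory term gives $(-1)^n\sqrt{5-\alpha}$ after normalization and one of the boundary integrals is exponentially small), and by integrating \eqref{phinflaifBm2} term by term on $[0,1]$: the cosine integrates to an $O(\nu_n^{-1})$ quantity, while the two boundary-layer pieces integrate to $\int_0^\infty \rho_0(t)Q_j(t)t^{-1}\nu_n^{-1}dt+O(\nu_n^{-2})$, and a direct computation using the identities for $b_0,b_1,b_2$ returns the stated constant $\sqrt{5-\alpha}\,\Delta_\alpha\sigma_2/(\sqrt{1+\Delta_\alpha^2}(\sigma_2-b_0\sigma_1))$. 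The main obstacle throughout is the last bit of bookkeeping: the three coefficients $k_j$ propagate through $\Phi_0,\Phi_1$, the oscillatory residue, and the two integrals, and one has to keep track of several almost-cancelling contributions to recover the compact form of $Q_0$.
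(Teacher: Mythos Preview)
Your plan for the main approximation \eqref{phinflaifBm2} is essentially the paper's approach: solve the homogeneous system $M(\nu_n)(k_1,\nu_n k_2,\nu_n^2 k_3)^\top=0$ asymptotically from \eqref{Mnu2}, insert the resulting ratios into \eqref{phi01ifbm22}, use Lemma \ref{lem6.12} to replace $p^\pm_j$ by their leading parts, and plug everything into the inversion formula \eqref{eigfunifBm}. The identification of $\rho_0$ from the factor $X(-t\nu)=-t\nu X_c(-t\nu)$ and of the cosine phase via \eqref{X0ifBm2} is also exactly how the paper proceeds.

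Where your proposal diverges from the paper, and runs into trouble, is the derivation of \eqref{funphin}. You propose to read off $\varphi_n(1)$ by setting $x=1$ in the final formula and to get $\int_0^1\varphi_n$ by integrating \eqref{phinflaifBm2} term by term. Two issues: first, at $x=1$ the oscillatory term alone does \emph{not} equal $(-1)^n\sqrt{5-\alpha}$; the $Q_1$-boundary layer is of order one there and must be combined with it, so your description is at best incomplete. Second, term-by-term integration of the boundary layer produces integrals of the form $\nu_n^{-1}\int_0^\infty\rho_0(t)Q_j(t)t^{-1}dt$ which you would then need to evaluate explicitly and combine with the integrated cosine; recovering the clean constant this way is a substantial detour. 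The paper bypasses all of this by going back to the \emph{definitions} \eqref{k1234}: since $\varphi=\psi''$, one has directly $\varphi_n(1)=\psi''_n(1)\propto k_1$ and $\int_0^1\varphi_n(x)\,dx=-\psi'_n(0)\propto k_2$. The asymptotic ratios $k_1\simeq -(-1)^n|\zeta|\,k_3\nu_n^2$ and $k_2\nu_n\simeq -(\sigma_2/\sigma_1)k_3\nu_n^2$, already obtained from the null-space computation, then give \eqref{funphin} immediately after dividing by the same normalizing factor $C_n$ used for \eqref{phinflaifBm2}. You should use this shortcut.
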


\begin{proof}
The asymptotic structure of matrix $M(\nu)$ in  \eqref{Mnu2} implies the following  
relations between the coefficients $k_1$, $k_2\nu_n$ and $k_3\nu^2_n$:
\begin{equation}
\begin{aligned}\label{k12eq}
k_1 & \simeq -\big(b_0 \cos \phi_{\nu_n} - \sin\phi_{\nu_n}\big)k_2\nu_n 
-\big((\sigma_1  -1) \cos \phi_{\nu_n}   -   b_0  \sin\phi_\nu\big)k_3\nu_n^2  \\
k_2   \nu_n  &\simeq  -\frac{\sigma_2}{\sigma_1} k_3 \nu_n^2
\end{aligned}
\end{equation} 
Plugging these expressions and the estimates from Lemma \ref{lem6.12} into \eqref{phi01ifbm22} gives
\begin{equation}
\label{Phi0inu}
\Phi_0(i\nu_n)    \simeq 2 X(i\nu_n)\Big(   (b_0   +i)k_2\nu_n 
 + (\sigma_1     -1 +   b_0 i) k_3  \nu_n^2 
 \Big) = 2 i\nu_n X_c(i\nu_n)\zeta   k_3 \nu^2_n  
\end{equation}
where we used the equality $X(i\nu_n)=i\nu_n X_c(i\nu_n)$ and defined 
$$
\zeta :=     -b_0\frac{\sigma_2}{\sigma_1}   
 + \sigma_1     -1 +  i\Big( b_0  - \frac{\sigma_2}{\sigma_1}    \Big).
$$
The argument and the absolute value of this constant are given by   
$$
\arg\{\zeta\}=
  -\arctan \Delta_\alpha
 \quad \text{and}\quad 
|\zeta|^2 = 
 \Big(\frac 1 {\Delta_\alpha^2}+1\Big)\frac {\big(\sigma_2  -b_0\sigma_1     \big)^2} {\sigma_1^2}.
$$
Similarly we have
\begin{equation}
\label{Phi01inu}
\begin{aligned}
\Phi_0(-\nu_n t)  &\simeq  -2\nu_n t X_c(-t\nu_n)\Big( -\frac{\sigma_2}{\sigma_1}    (b_0  -    t)
+ (\sigma_1   -  b_0  t+   t^2)
\Big) k_3 \nu_n^2 \\
\Phi_1(-\nu_n t)  & \simeq -2\nu_n t X_c(-t\nu_n) k_1   
\end{aligned}
\end{equation}
where the residuals are bounded uniformly with respect to $t$ by Lemma \ref{lem6.12}. 
Combining the equations in \eqref{k12eq} and using the definition of $\Delta_\alpha$, we also have      
\begin{align*}
k_1   & \simeq  
\Big(
\big(b_0 \cos \phi_{\nu_n} - \sin\phi_{\nu_n}\big)
 \frac{\sigma_2}{\sigma_1}   
-\big((\sigma_1  -1) \cos \phi_{\nu_n}   -   b_0  \sin\phi_\nu\big) 
\Big)k_3\nu_n^2=\\
& =
-(-1)^n\frac 1{\sqrt{1+\Delta_\alpha^2}}\Big(
\big(b_0   - \Delta_\alpha\big)
 \frac{\sigma_2}{\sigma_1}   
-\big((\sigma_1  -1)    -   b_0   \Delta_\alpha\big) 
\Big)k_3\nu_n^2 =
-(-1)^n |\zeta| k_3\nu_n^2.
\end{align*}

Expression  \eqref{phinflaifBm2} is now obtained by plugging \eqref{Phi0inu}-\eqref{Phi01inu} and  \eqref{X0ifBm2} into \eqref{eigfunifBm} 
and normalizing by the factor
$$
C_n := - 2\nu^2_n k_3 \frac{|c_\alpha|}{\lambda \Gamma(\alpha)}  \frac {|\zeta|} {\sqrt{5   -   \alpha}}.
$$
Asymptotic formulas \eqref{funphin} follow by normalizing expressions \eqref{k1234} by the same factor.
\end{proof}  
\subsubsection{Enumeration alignment}

The enumeration, introduced in Lemma \ref{lem7.7}, may differ from the {\em natural} enumeration, which puts all the eigenvalues 
into increasing order, only by a constant shift. This shift can be identified by the calibration procedure, 
based on continuity of the spectrum, similar to Section 5.1.7. of \cite{ChK}. Since for the integrated Brownian motion, 
corresponding to $\alpha=1$, the sequence $\nu_n$ is asymptotic to $\pi (n-\frac 1 2)$, the asymptotics in \eqref{asym2}
should be shifted by $\pi$ and
the formulas in (2) and (3) of Theorem \ref{thm-ifBm} are obtained by the corresponding adjustment of the expressions 
from Lemma \ref{lem6.88}. 

\subsection{The case $H>\frac 1 2$}

For $\alpha\in (0,1)$, corresponding to $H\in (\frac 1 2,1)$, the proof is done completely differently, since 
as we will see below, structural function $\Lambda(z)$ in this case has more roots than before.  

\subsubsection{The Laplace transform} 

\begin{lem}
Let $(\lambda, \varphi)$ be a solution of \eqref{eigifBm}, then the Laplace transform of $\varphi$ satisfies 
\begin{equation}
\label{phizifbm} 
\widehat\varphi(z)= \widehat \varphi(0)+z\frac d {dz}\widehat \varphi(z)_{\big|z=0} -\frac {e^{-z}\Phi_1(-z)+\Phi_0(z)}{\Lambda(z)} 
\end{equation}
where 
functions $\Phi_0(z)$ and $\Phi_1(z)$
are sectionally holomorphic on the cut plane $\mathbb{C}\setminus \Real_{>0}$ and
\begin{equation}
\label{lambdaifbm}
\Lambda(z) := \frac {\lambda\Gamma(\alpha)}{c_\alpha}z^2 -\frac{1}{z^2} \int_0^\infty \frac{2 t^{\alpha }}{t^2-z^2} 
 dt. 
\end{equation} 
\end{lem}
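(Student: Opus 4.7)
The plan is to mirror the derivation of the analogous lemma in the $H<\tfrac12$ case, but exploiting the specific structure of the f.B.m.\ kernel available for $H>\tfrac12$. The key observation is that, for $\alpha\in(0,1)$, the mixed derivative $\partial_u\partial_v K_{B^H}(u,v) = c_\alpha|u-v|^{-\alpha}$ with $c_\alpha=(1-\tfrac\alpha2)(1-\alpha)$, so that the integrated f.B.m.\ kernel admits the representation
$$
K(s,t) = c_\alpha \int_0^s\int_0^t |u-v|^{-\alpha}(s-u)(t-v)\,du\,dv.
$$
Plugging this into \eqref{eigifBm}, interchanging orders of integration, and setting
$$
\psi(u):=\int_u^1 (r-u)\varphi(r)\,dr,
$$
yields $\varphi=\psi''$ with the a priori boundary conditions $\psi(1)=\psi'(1)=0$. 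Differentiating the resulting equation twice in $t$ then reduces the eigenproblem to the generalized fourth order problem
$$
c_\alpha \int_0^1 |u-t|^{-\alpha}\psi(u)\,du = \lambda \psi^{(4)}(t),\qquad t\in[0,1],
$$
supplemented by $\psi(1)=\psi'(1)=0$ together with the two additional conditions $\psi''(0)=\psi'''(0)=0$, obtained by evaluating the intermediate expressions at $t=0$.

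Next I would apply the Gamma identity \eqref{Gfla}, valid for $\alpha\in(0,1)$, to write
$$
c_\alpha \int_0^1 |u-t|^{-\alpha}\psi(u)\,du = \frac{c_\alpha}{\Gamma(\alpha)}\int_0^\infty s^{\alpha-1} u(t,s)\,ds,
\qquad u(t,s):=\int_0^1 e^{-s|u-t|}\psi(u)\,du,
$$
and differentiate $u(t,s)$ twice in $t$ to obtain the o.d.e.\ $u_{tt}(t,s)=s^2 u(t,s)-2s\psi(t)$ with boundary conditions $u_t(0,s)=su(0,s)$ and $u_t(1,s)=-su(1,s)$. Taking the Laplace transform of this o.d.e.\ yields an explicit expression for $\widehat u(z,s)$ in terms of $\widehat\psi(z)$, $u(0,s)$ and $u(1,s)$. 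Simultaneously, integration by parts four times, using the four boundary conditions on $\psi$, gives
$$
\widehat{\psi^{(4)}}(z)=\psi'''(1)e^{-z}+z\psi''(1)e^{-z}-z^2\psi'(0)-z^3\psi(0)+z^4\widehat\psi(z).
$$

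Equating the two Laplace-transformed expressions and collecting terms isolates $\widehat\psi(z)$: the coefficient of $\widehat\psi(z)$ on the left is precisely $\frac{c_\alpha}{\Gamma(\alpha)}z^2\Lambda(z)$, where $\Lambda(z)$ is the function defined in \eqref{lambdaifbm} (the $1/z^2$ prefactor inside $\Lambda$ naturally emerges upon dividing through by $z^2$). This gives
$$
z^2\widehat\psi(z)=\frac{\Phi_0(z)+e^{-z}\Phi_1(-z)}{\Lambda(z)},
$$
with $\Phi_0$ and $\Phi_1$ formed from the Cauchy-type integrals $\int_0^\infty \tfrac{s^{\alpha-1}}{s-z}u(0,s)\,ds$ and $\int_0^\infty \tfrac{s^{\alpha-1}}{s-z}u(1,s)\,ds$ plus polynomial contributions from the boundary values $\psi(0),\psi'(0),\psi''(1),\psi'''(1)$. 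By construction these two functions are sectionally holomorphic on $\mathbb{C}\setminus\mathbb{R}_{>0}$. Finally, integrating by parts twice in $\widehat\varphi(z)=\widehat{\psi''}(z)$ and using $\psi(1)=\psi'(1)=0$ gives $\widehat\varphi(z)=-\psi'(0)-z\psi(0)+z^2\widehat\psi(z)$, and the identifications $\widehat\varphi(0)=-\psi'(0)$, $\partial_z\widehat\varphi(0)_{|z=0}=-\psi(0)$ deliver the representation \eqref{phizifbm} (after an admissible sign convention on $\Phi_0,\Phi_1$).

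The main technical obstacle is less about any single calculation and more about the bookkeeping: ensuring that the four boundary conditions on $\psi$ match the four derivatives of $\psi^{(4)}$ so that the singular terms at $z=0$ arrange themselves into precisely the structure prescribed by $\Lambda(z)$ in \eqref{lambdaifbm}. In particular, the appearance of the $1/z^2$ factor in $\Lambda(z)$ reflects the fact that two of the boundary conditions live at $t=0$, and that the Laplace transform must be divided by $z^2$ in order to recast the identity in terms of the entire function $\widehat\varphi$ itself rather than $z^2\widehat\psi$.
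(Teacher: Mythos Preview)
Your proposal is correct and follows essentially the same route as the paper: reduce to the fourth-order generalized eigenproblem $c_\alpha\int_0^1|x-y|^{-\alpha}\psi(y)\,dy=\lambda\psi^{(4)}(x)$ with the four boundary conditions $\psi(1)=\psi'(1)=\psi''(0)=\psi'''(0)=0$, apply the Gamma identity \eqref{Gfla}, and compare two Laplace-transform expressions for the auxiliary function $u(x,t)$. The only cosmetic difference is in how you reach the generalized eigenproblem: the paper differentiates the original equation and then interchanges derivative and integral (valid because $\alpha\in(0,1)$), whereas you start from the double-integral representation of the kernel and integrate against $\varphi$ directly; both yield the same $\psi$ (your $\int_u^1(r-u)\varphi(r)\,dr$ equals the paper's $\int_u^1\int_y^1\varphi(r)\,dr\,dy$) and the same equation.
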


\begin{proof}
As in the case $H<\frac 1 2$, the function 
$
\psi(x) = \int_x^1 \int_y^1 \varphi(u) du dy,
$
satisfies \eqref{geneig_ifBm2}.
Setting $c_{\alpha}=(1-\frac{\alpha}{2})(1-\alpha)$ and interchanging derivative and integration we arrive at the following 
generalized eigenproblem:
\begin{equation}\label{geneig_ifBm}
\begin{aligned}
&
c_\alpha \int_0^1 |x-y|^{-\alpha} \psi(y) dy=\lambda \psi^{(4)}(x), \quad x\in [0,1] \\
&
\psi(1)=0, \quad \psi'(1)=0, \quad \psi''(0)=0, \quad \psi^{(3)}(0)=0.
\end{aligned}
\end{equation}
Define 
$$
u(x,t) = \int_0^1  e^{-t|x-y|}\psi(y) dy\quad \text{and}\quad
u_0(x) = \int_0^{\infty} t^{\alpha-1}u(x,t)dt,
$$ 
then, plugging the identity \eqref{Gfla} into \eqref{geneig_ifBm} we get
$$
\frac{c_\alpha}{\Gamma(\alpha)} u_0(x) = \lambda \psi^{(4)}(x), \quad x\in [0,1] 
$$
and therefore, in view of the boundary conditions in \eqref{geneig_ifBm},
\begin{equation}
\label{u0zpsiz}
\begin{aligned}
\widehat u_0(z) = & \frac{\lambda \Gamma(\alpha)}{c_\alpha} \int_0^1 e^{-zx} \psi^{(4)}(x) dx=\\
 &\frac{\lambda \Gamma(\alpha)}{c_\alpha}\Big( e^{-z}\psi^{(3)}(1)+z e^{-z}\psi''(1)-z^2 \psi'(0)-z^3 \psi(0) +z^4 \widehat \psi(z) \Big).
\end{aligned}
\end{equation}
An additional relation between $\widehat{u}_0(z)$ and $\widehat{\varphi}(z)$ is obtained as in the proof Lemma \ref{lem5.1}
(c.f. \eqref{rel2}):
$$
\widehat u_0(z)=  \int_0^\infty \frac{t^{\alpha-1}}{z-t}u(0,t)dt-e^{-z} \int_0^\infty \frac{t^{\alpha-1}}{z+t}u(1,t)dt-\widehat \psi(z) \int_0^\infty \frac{2t^{\alpha}}{z^2-t^2}dt. 
$$
Combining this with \eqref{u0zpsiz} and rearranging we obtain the expression 
$$
z^2 \widehat \psi(z)=-\frac {e^{-z}\Phi_1(-z)+\Phi_0(z)}{\Lambda(z)},
$$  
where $\Lambda(z)$ is defined in \eqref{lambdaifbm} and 
\begin{equation}\label{phi01ifbm}
\begin{aligned}
\Phi_0(z) := &-\frac {\lambda\Gamma(\alpha)}{c_\alpha} \psi'(0)z^2 - \frac {\lambda\Gamma(\alpha)}{c_\alpha} \psi(0)z^3+\int_0^\infty \frac{t^{\alpha-1}}{t-z}u(0,t)dt\\
\Phi_1(z) := &\phantom{+} \frac {\lambda\Gamma(\alpha)}{c_\alpha} \psi^{(3)}(1) - \frac {\lambda\Gamma(\alpha)}{c_\alpha} \psi''(1)z+\int_0^\infty \frac{t^{\alpha-1}}{t-z}u(1,t)dt.
\end{aligned}
\end{equation}
Formula  \eqref{phizifbm} follows since 
$$
\widehat \varphi(z) = \widehat \psi''(z)= -\psi'(0)-\psi(0)z + z^2 \widehat \psi(z).
$$
\end{proof}

Next lemma details the structure of $\Lambda(z)$: 

\begin{lem}  \label{lem62}
\

\medskip

\noindent
a) $\Lambda(z)$ admits the expression  
\begin{equation}\label{Lambdazifbm}
\Lambda (z) = \frac {\lambda\Gamma(\alpha)}{c_\alpha} z^2 - \frac{\pi   }{ \cos \frac{\pi } 2\alpha} z^{\alpha-3} 
\begin{cases}
e^{\frac{1-\alpha}{2}\pi i}  & \arg (z) \in (0,\pi) \\
e^{-\frac{1-\alpha}{2}\pi i }  &   \arg(z)\in (-\pi, 0)
\end{cases}
\end{equation} 
and has six zeros 
$$
\pm z_0 = \pm i\nu , \quad \pm z_+ = \pm \nu e^{\frac{\pi}{2}\frac{1-\alpha}{5-\alpha}i}, \quad 
\pm z_- = \pm \nu e^{\frac{\pi}{2}\frac{9-\alpha}{5-\alpha}i}
$$
where $\nu$ is given by 
\begin{equation}\label{lambdanuifbm}
\nu^{\alpha-5}=
\frac{\lambda \Gamma(\alpha)}{c_\alpha} \frac {\cos \frac{\pi } 2\alpha} {\pi }.
\end{equation}

\medskip 
\noindent 
b) The limits of $\Lambda(z)$ across the real line are given by 
\begin{equation}\label{Lpmt_ifBm}
\Lambda^\pm (t) =
 \frac {\lambda\Gamma(\alpha)}{c_\alpha} t^2
\mp
|t|^{\alpha-3} 
\frac{\pi   }{ \cos \frac{\pi } 2\alpha}
\begin{cases}
e^{\frac{1\mp\alpha}{2}\pi i} & \quad t>0 \\
e^{-\frac{1\mp\alpha}{2}\pi i } & \quad t<0
\end{cases}
\end{equation}
and satisfy the symmetries \eqref{conjp}-\eqref{absL}.

\medskip 
\noindent c) The argument 
$
\theta(t) := \arg\{\Lambda^+(t)\} \in (-\pi,\pi]
$
is an odd function $\theta(t)=-\theta(-t)$,
$$
\theta(t) = \arctan \frac
{
-
\sin \frac{1-\alpha}{2}\pi  
}
{
(t/\nu)^{5-\alpha}
- 
\cos \frac{1-\alpha}{2}\pi 
},\quad t>0
$$
increasing continuously from $\theta(0+)=-\frac {1+\alpha}2\pi$ to $0$ as $t\to\infty$. The rescaled function $\theta_0(u):=\theta(u\nu)$ satisfies 
\begin{equation}\label{bifbm}
b_{k,\alpha}:=\frac{1}{\pi}\int_0^\infty u^k \theta_0(u)du=-\frac{1}{k+1} \frac{\sin\big((k+1)\frac{1+\alpha}{2}\frac{\pi}{5-\alpha}\big)}{\sin\big((k+1)\frac{\pi}{5-\alpha}\big)}\quad k=0,1,2
\end{equation}
and the following asymptotics holds:
$$
\frac{1}{\pi} \int_0^\infty \frac{\theta(s)}{s-z}ds = -\frac{\nu b_0}{z}-\frac{\nu^2 b_1}{z^2}-\frac{\nu^3 b_2}{z^3}
+O(z^{-4}), \quad z \to\infty, \quad z \in \mathbb{C}\setminus \Real_{>0}.
$$ 

\end{lem}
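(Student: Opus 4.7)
The plan is to follow the template set by Lemma \ref{lem4.2} and Lemma \ref{lem5.2}, adapting it to the regime $\alpha\in(0,1)$ where, in contrast to the two previous cases, the structural function admits six zeros instead of two. For part (a), I apply the contour-integration identity \eqref{intfla} to evaluate the integral in \eqref{lambdaifbm} and divide by $z^2$ to obtain the sectionally holomorphic closed form \eqref{Lambdazifbm}. To locate the zeros, substituting $z=\nu e^{i\omega}$ into the upper-half-plane branch and separating modulus from argument gives \eqref{lambdanuifbm} together with the argument equation $(5-\alpha)\omega\equiv\tfrac{1-\alpha}{2}\pi\pmod{2\pi}$. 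For $\omega\in(0,\pi)$ this yields exactly three solutions, $\omega=\tfrac{\pi}{2}\tfrac{1-\alpha}{5-\alpha},\tfrac{\pi}{2},\tfrac{\pi}{2}\tfrac{9-\alpha}{5-\alpha}$, matching $z_+,z_0,z_-$; the three conjugate zeros in the lower half plane then follow from $\Lambda(\bar z)=\overline{\Lambda(z)}$, and $\Lambda(-z)=\Lambda(z)$ produces $-z_0,-z_\pm$.

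For part (b), the limits \eqref{Lpmt_ifBm} are a direct computation from \eqref{Lambdazifbm}: the $t>0$ case is immediate, while for $t<0$ one uses $z^{\alpha-3}\to|t|^{\alpha-3}e^{\pm i\pi(\alpha-3)}$ and combines exponents modulo $2\pi$ to reduce to $e^{\mp\frac{1-\alpha}{2}\pi i}$. The three symmetries \eqref{conjp}--\eqref{absL} are then read off by inspection. For part (c), the scaling identity \eqref{lambdanuifbm} allows one to rewrite $\Lambda^+(t)=\frac{\pi}{\cos\frac{\pi}{2}\alpha}t^{\alpha-3}\big((t/\nu)^{5-\alpha}-e^{\frac{1-\alpha}{2}\pi i}\big)$, so that $\theta(t)$ becomes the argument of an explicit complex number, yielding the claimed $\arctan$ representation once the appropriate branch is chosen. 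The limits $\theta(0+)=\arg\big(-e^{\frac{1-\alpha}{2}\pi i}\big)=-\tfrac{1+\alpha}{2}\pi$ in $(-\pi,\pi]$ and $\theta(\infty)=0$ are immediate, and monotonicity follows from a short differentiation of the argument.

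For the constants $b_{k,\alpha}$, the substitution $s=u^{5-\alpha}$ combined with one integration by parts recasts $\int_0^\infty u^k\theta_0(u)du$ as a standard Mellin-type integral of the form $\int_0^\infty \frac{s^\rho}{s^2-2s\cos\frac{1-\alpha}{2}\pi+1}ds$ for an explicit exponent $\rho=(k+1)/(5-\alpha)$; this is evaluable in closed form by keyhole-contour integration picking up the residues at $s=e^{\pm i\frac{1-\alpha}{2}\pi}$, and after using $\sin\tfrac{1-\alpha}{2}\pi=\cos\tfrac{\alpha\pi}{2}$ to cancel a common factor one recovers \eqref{bifbm}, with the overall minus sign reflecting the fact that $\theta_0<0$ throughout $(0,\infty)$ in this regime. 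Finally, the asymptotic expansion of the Cauchy transform follows from the identity $\frac{1}{s-z}=-\sum_{j=0}^{3}\frac{s^j}{z^{j+1}}+\frac{s^3}{z^3(s-z)}$: the first three terms, integrated against $\theta(s)$, produce $-\nu^{k+1}b_k z^{-k-1}$ via $\int_0^\infty s^k\theta(s)ds=\pi\nu^{k+1}b_{k,\alpha}$, and the remainder is $O(z^{-4})$ since $s^3\theta(s)\sim s^{\alpha-2}$ is integrable at infinity when $\alpha<1$. The main obstacle is the bookkeeping for the keyhole integration that produces $b_{k,\alpha}$: the branch factors on the two sides of the cut and the two polar residues combine nontrivially to collapse into the compact trigonometric form \eqref{bifbm}.
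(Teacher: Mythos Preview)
Your proposal is correct and follows essentially the same route as the paper: the identity \eqref{intfla} for part (a), the polar substitution $z=\nu e^{i\omega}$ to solve $(5-\alpha)\omega\equiv\tfrac{1-\alpha}{2}\pi\pmod{2\pi}$ for the zeros, direct computation for part (b), and integration by parts plus a change of variable and contour integration for the constants $b_{k,\alpha}$. The only cosmetic difference is that the paper reduces the moment integrals to the form $\int_0^\infty s^{(k+1)/(5-\alpha)}\big(1+(\cot\tfrac{(1+\alpha)\pi}{2}+s)^2\big)^{-1}ds$ rather than your $\int_0^\infty s^{(k+1)/(5-\alpha)}\big(s^2-2s\cos\tfrac{1-\alpha}{2}\pi+1\big)^{-1}ds$; these are equivalent reparametrizations of the same keyhole computation, and your version is arguably the more symmetric one since the poles sit visibly at $e^{\pm i\frac{1-\alpha}{2}\pi}$.
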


\begin{proof} 
\

\medskip
\noindent 
a) 
Formula \eqref{Lambdazifbm} follows from definition \eqref{lambdaifbm} and identity \eqref{intfla}.  
Note that conjugate of any zero of $\Lambda(z)$ is also a zero, hence it is enough to locate zeros only in the upper half plane. 
To this end let $z=\nu e^{i \omega}$ with $\nu>0$ and $\omega \in (0,\pi)$, then 
equating \eqref{Lambdazifbm} to zero gives
$$
\frac{\lambda \Gamma(\alpha)}{c_\alpha} \frac {\cos \frac{\pi } 2\alpha} {\pi } \nu^{5-\alpha}=
\exp \left(  \frac{1-\alpha}{2}\pi i+(\alpha-5)\omega i\right).
$$
Obviously, any solution must satisfy \eqref{lambdanuifbm} and 
$$
\frac{1-\alpha}{2}\pi +(\alpha-5)\omega =2\pi k\quad \text{for some \ } k\in \mathbb{Z}. 
$$ 
The only values of $k$, for which 
$$
\omega=\frac \pi 2 \frac{1-\alpha-4 k}{5-\alpha}  \in (0,\pi),
$$
are $0$, $-1$ and $-2$, corresponding to three zeros in the upper half plane:
$$
z_+ = \nu e^{\frac{\pi}{2}\frac{1-\alpha}{5-\alpha}i}, \quad z_0 = \nu i, \quad  z_- = \nu e^{\frac{\pi}{2}\frac{9-\alpha}{5-\alpha}i}.
$$
The zeros in the lower half plane are the conjugates 
$$
\overline{z_0} = -z_0, \quad \overline{z_+} =  -z_-, \quad \overline{z_-} = - z_+.
$$

\medskip
\noindent
b) All the formulas  are obtained from \eqref{Lambdazifbm} by direct calculations.  

\medskip
\noindent
c) The expression for $\theta(t)$ follows from \eqref{Lpmt_ifBm}. The integrals in \eqref{bifbm}  reduce to 
\begin{equation*}
\frac{1}{\pi}\int_0^\infty u^k \theta_0(u)du=-\frac{(\cos(\frac{\pi}{2}\alpha))^{\frac{k+1}{5-\alpha}}}{\pi (k+1)} \int_0^\infty \frac{s^{\frac{k+1}{5-\alpha}}}{1+(\cot(\frac{\pi}{2}(1+\alpha))+s)^2} ds
\end{equation*}
by a change of variable and the claimed formulas are obtained by appropriate contour integration. 
\end{proof}

\subsubsection{Removal of singularities}

Since the Laplace transform is an entire function, removal of poles  in \eqref{phizifbm}
gives  
\begin{equation} \label{algcifbm}
\begin{aligned} 
&
\Phi_0(\pm z_0)+e^{\mp z_0}\Phi_1(\mp z_0)  = 0 \\
&
\Phi_0(\pm z_+)+e^{\mp z_+}\Phi_1(\mp z_+)  = 0 \\
&
\Phi_0(\pm z_-)+e^{\mp z_-}\Phi_1(\mp z_-)  = 0 
\end{aligned} 
\end{equation}
and removal of discontinuity on the real line as in Section \ref{sec512} yields the boundary conditions, c.f. \eqref{Hp}:  
$$
\begin{aligned}
&
\Phi_0^+(t) - e^{2i\theta(t)}\Phi_0^-(t) = 2i  e^{-t} e^{i\theta(t)}\sin\theta(t) \Phi_1(-t)
\\
&
\Phi_1^+(t) - e^{2i\theta(t)}\Phi_1^-(t) = 2i e^{-t}  e^{i\theta(t)}\sin\theta(t) \Phi_0(-t)
\end{aligned}\qquad t>0.
$$
Since $tu(0,t)$ and $tu(1,t)$ are bounded functions, it follows from \eqref{phi01ifbm} that 
\begin{equation}
\label{phi01growthifbm}
\begin{aligned}
\Phi_0(z) =\, &  2k_5 z^2 + 2k_6 z^3 + O(z^{-1})\\
\Phi_1(z) =\, & 2k_3 + 2k_4 z + O(z^{-1})
\end{aligned} \qquad \text{as\ } z\to \infty
\end{equation}
where we defined
\begin{equation}\label{k1to6}
\begin{aligned}
k_5 &=-\frac {\lambda\Gamma(\alpha)}{2 c_\alpha} \psi'(0) , \quad k_6=- \frac {\lambda\Gamma(\alpha)}{2 c_\alpha} \psi(0) \\
k_3 &= \frac {\lambda\Gamma(\alpha)}{2 c_\alpha} \psi^{(3)}(1), \quad k_4=- \frac {\lambda\Gamma(\alpha)}{2 c_\alpha} \psi''(1)
\end{aligned}
\end{equation}
Also we have   
\begin{equation}\label{phi01estifBmzero}
\Phi_0(z)\sim z^{\alpha-1}\quad \text{and}  \quad  \Phi_1(z) \sim z^{\alpha-1}\quad \text{as}\ z\to 0. 
\end{equation}

\subsubsection{An equivalent formulation of the eigenproblem}

The appropriate solution of the homogeneous Riemann boundary value problem 
$$
X^+(t) - e^{2i\theta(t)}X^-(t) =0, \quad t\in \Real_{>0},
$$
in this case is given by the Sokhotski--Plemelj formula 
\begin{equation}\label{Xz_ifbm}
X(z)= \frac 1 z X_c(z)=
\frac{1}{z} \exp \left(\frac 1 \pi \int_0^\infty\frac{\theta(t)}{t-z}dt\right), \quad z\in \mathbb{C}\setminus \Real_{>0}.
\end{equation}
Factor $1/z$ in front of the exponential is chosen to guarantee square integrability of the functions 
\begin{equation}\label{SzDz}
\begin{aligned}
& S(z):= \frac{\Phi_0(z)+\Phi_1(z)}{2X(z)} \\
& D(z):= \frac{\Phi_0(z)-\Phi_1(z)}{2X(z)}
\end{aligned}
\end{equation}
at the origin, in view of  estimates \eqref{phi01estifBmzero} and \eqref{Xzeroifbm} below: 
 
\begin{lem} 
The function defined in \eqref{Xz_ifbm} satisfies 
\begin{equation}\label{Xzeroifbm}
X(z) \sim z^{\frac{\alpha-1}{2}}   \quad \text{as\ } z \to 0
\end{equation}
and 
\begin{equation}\label{Xinfifbm}
X(z) =    \frac{1}{z}   -  b_0\frac{\nu}{z^2} +  \Big( \frac{1}{2} b_0^2-b_1 \Big)\frac{\nu^2}{z^3}    -  \Big( \frac{1}{6} b_0^3-b_0b_1 + b_2\Big)\frac{\nu^3}{z^4} + O(z^{-5}), \    z \to \infty.
\end{equation}
Moreover, $X_0(z):=X_c(\nu z)$ satisfies 
\begin{equation}\label{X0iifBm}
\arg\big\{X_0(i)\big\} = -\frac {1+\alpha}8\pi\quad \text{and}\quad |X_0(i)| = \sqrt{\frac{5-\alpha}{8}}
 \frac{1}{\cos \frac{\pi}{2}\frac{1-\alpha}{5-\alpha}}.
\end{equation}
\end{lem}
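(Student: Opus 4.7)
The plan is to write $X(z) = z^{-1} X_c(z)$ and reduce parts (a) and (b) to asymptotic statements about the canonical Cauchy factor $X_c$, while part (c) amounts to evaluating $X_c$ at $i\nu$. For (a), I would invoke the standard Sokhotski--Plemelj asymptotic $X_c(z) \sim z^{-\theta(0+)/\pi}$ as $z \to 0$; combined with $\theta(0+) = -\tfrac{1+\alpha}{2}\pi$ from Lemma~\ref{lem62}(c), this produces $X_c(z) \sim z^{(1+\alpha)/2}$ and hence $X(z) \sim z^{(\alpha-1)/2}$. For (b), I would substitute the asymptotic expansion of $\tfrac{1}{\pi}\int_0^\infty \theta(s)/(s-z)\,ds$ from Lemma~\ref{lem62}(c) into the exponential defining $X_c$, Taylor-expand $\exp(f) = 1 + f + \tfrac{1}{2}f^2 + \tfrac{1}{6}f^3 + O(f^4)$ through order $z^{-3}$, and then divide by $z$. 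Collecting the contributions of $f$, $\tfrac{1}{2}f^2$, and $\tfrac{1}{6}f^3$ reproduces exactly the coefficient combinations $-\nu b_0$, $\nu^2(\tfrac{1}{2}b_0^2 - b_1)$, and $-\nu^3(\tfrac{1}{6}b_0^3 - b_0 b_1 + b_2)$ appearing in \eqref{Xinfifbm}.

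Part (c) is where the real work lies. The argument identity $\arg\{X_0(i)\} = \theta_0(0+)/4 = -\tfrac{1+\alpha}{8}\pi$ is the same symmetry relation established in Lemma~5.5 of \cite{ChK}: it is obtained by splitting $\log X_c(i\nu) = \tfrac{1}{\pi}\int_0^\infty \theta(t)/(t-i\nu)\,dt$ into real and imaginary parts and reducing the imaginary part via the odd extension of $\theta$. For the modulus, the plan is to exhibit $X_c^2$ as an explicit factor of $\Lambda$. Matching the quadratic growth $\Lambda(z) \sim \tfrac{\lambda\Gamma(\alpha)}{c_\alpha} z^2$ at infinity against $X_c^2 \to 1$, the singularity $\Lambda(z) \sim z^{\alpha-3}$ at the origin against $X_c^2(z) \sim z^{1+\alpha}$, and absorbing the six zeros $\pm z_0, \pm z_+, \pm z_-$ identified in Lemma~\ref{lem62}(a) produces the canonical factorization
\begin{equation*}
\Lambda(z) = X_c^2(z)\,\frac{\lambda\Gamma(\alpha)}{c_\alpha}\,\frac{(z^2-z_0^2)(z^2-z_+^2)(z^2-z_-^2)}{z^4}.
\end{equation*}
Solving for $X_c^2(z_0)$ by L'Hospital's rule on the vanishing $(z^2-z_0^2)$ factor gives
\begin{equation*}
|X_c(i\nu)|^2 = \frac{c_\alpha}{\lambda\Gamma(\alpha)}\,\frac{z_0^3\,\Lambda'(z_0)}{2(z_0^2-z_+^2)(z_0^2-z_-^2)},
\end{equation*}
and a direct evaluation using $\Lambda'(i\nu) = i\pi(5-\alpha)\nu^{\alpha-4}/\cos(\pi\alpha/2)$, the identity $\omega_+ + \omega_- = \pi$ between the phases of $z_\pm$ (which yields $(z_0^2-z_+^2)(z_0^2-z_-^2) = 4\nu^4 \cos^2\omega_+$ with $\omega_+ = \tfrac{\pi(1-\alpha)}{2(5-\alpha)}$), and the dispersion relation \eqref{lambdanuifbm} collapses this to $\tfrac{5-\alpha}{8\cos^2\omega_+}$, matching the claimed value of $|X_0(i)|^2$.

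The main obstacle is the rigorous justification of this factorization: unlike the $H<\tfrac12$ case where $\Lambda$ has only two zeros, here the rational factor must balance a pole of order four at the origin against six simple zeros in order to preserve the correct quadratic growth at infinity. One must verify that $P(z) := \Lambda(z)/X_c^2(z)$, which by construction has no jump across $\mathbb{R}_+$ (since $\Lambda$ and $X_c^2$ share the multiplicative jump $e^{2i\theta(t)}$ there), actually coincides with the stated rational function globally; this requires checking its behavior across $\mathbb{R}_-$ against the explicit jumps of $\Lambda$ from Lemma~\ref{lem62}(b), and fixing the multiplicative constant by matching Laurent expansions at $0$ and $\infty$.
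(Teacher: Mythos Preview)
Your treatment of \eqref{Xzeroifbm} and \eqref{Xinfifbm}, and of the argument identity in \eqref{X0iifBm}, matches the paper's and is correct.

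For the modulus $|X_0(i)|$ there is a genuine gap. The factorization you propose, $\Lambda(z)=X_c(z)^2\cdot R(z)$ with $R$ rational, cannot hold: as you yourself flag, $\Lambda$ has a nontrivial jump across $\mathbb{R}_{<0}$ (Lemma~\ref{lem62}(b)), whereas $X_c(z)^2$ is analytic there. And even if it did hold, taking the limit at $z_0=i\nu$ would produce $X_c(i\nu)^2$ on the left, not $|X_c(i\nu)|^2$; these differ because $\arg X_c(i\nu)=-\tfrac{1+\alpha}{8}\pi\ne 0$. So the step in which you pass from your factorization to $|X_c(i\nu)|^2=\dots$ is not justified.

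The correct identity --- which is what the paper's limit formula encodes, following Lemma~5.5 of \cite{ChK} --- is
\[
\Lambda(z)\;=\;\frac{\lambda\Gamma(\alpha)}{c_\alpha}\,\frac{(z^2-z_0^2)(z^2-z_+^2)(z^2-z_-^2)}{z^4}\,X_c(z)\,X_c(-z).
\]
Here the factor $X_c(-z)$ carries its cut on $\mathbb{R}_{<0}$, and its jump there matches that of $\Lambda$ via the symmetry $\Lambda^+(t)/\Lambda^-(t)=\Lambda^-(-t)/\Lambda^+(-t)$; hence the ratio is entire and equals $\lambda\Gamma(\alpha)/c_\alpha$ by its value at infinity. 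At $z=i\nu$ the conjugation symmetry $X_c(-i\nu)=\overline{X_c(i\nu)}$ turns the product into $|X_0(i)|^2$, after which your L'H\^opital computation and the evaluation $(z_0^2-z_+^2)(z_0^2-z_-^2)=4\nu^4\cos^2\phi$ go through verbatim to give $\tfrac{5-\alpha}{8\cos^2\phi}$.
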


\begin{proof}

Estimate \eqref{Xzeroifbm} is valid, since  
$
X_c (z) \sim z^{-\theta(0+)/\pi}
$
with $\theta(0+)=-\frac {1+\alpha}{2}\pi$ (see Lemma \ref{lem62} (c)).  
The asymptotics at infinity is obtained by \eqref{bifbm} and the Taylor expansion of exponential. 
The formulas in \eqref{X0iifBm} follow from the identities    
$$
\arg\{X_0(i)\} = \frac {\theta_0(0+)} 4
\quad
\text{and}
\quad 
|X_0(i)|^2 =  \frac{c_\alpha}{\lambda \Gamma(\alpha)} \lim_{z\to z_0} 
\frac{z^4 \Lambda(z)}{(z^2-z_0^2) (z^2-z_+^2) (z^2-z_-^2)}
$$
proved as in Lemma 5.5 \cite{ChK}.  
\end{proof}

Being integrable at the origin, functions $S(z)$ and $D(z)$ satisfy, c.f. \eqref{SD}:
$$
\begin{aligned}
& 
S(z) = \phantom{+}\frac 1 \pi \int_0^\infty \frac{h(t)e^{-t}}{t-z}S(-t) dt + P_S(z) \\
&
D(z) = -\frac 1 \pi \int_0^\infty \frac{h(t)e^{-t}}{t-z}D(-t)dt + P_D(z)
\end{aligned}
$$
where polynomials are chosen to match a priori growth of $S(z)$ and $D(z)$ at infinity,
determined by \eqref{phi01growthifbm} and \eqref{Xinfifbm}: 
\begin{equation*}
\begin{aligned}
& 
P_S(z) = k_1 + l_1 z +l_2 z^2 + l_3 z^3 + l_4 z^4  \\
&
P_D(z) = k_2 + m_1 z +m_2 z^2 + m_3 z^3 + m_4 z^4
\end{aligned}
\end{equation*}
Here  $k_1$ and $k_2$ are arbitrary and the rest of the constants are related to the previously introduced quantities through 
matching the powers in \eqref{SzDz}:   
\begin{align*}
l_4 & = k_6 & 
m_4 & = k_6 \\
l_3 & = k_5 +  b_0\nu k_6 & 
m_3 & = k_5 +  b_0\nu k_6 \\
l_2 & = k_4 +  b_0\nu k_5 +  \nu^2 \sigma_1k_6 & 
m_2 & = -k_4 +  b_0\nu k_5 +  \nu^2 \sigma_1k_6 \\
l_1 & = k_3 +  b_0 \nu k_4 +  \nu^2 \sigma_1k_5+ \nu^3 \sigma_2k_6 & 
m_1 & = -k_3 -  b_0 \nu k_4 +  \nu^2 \sigma_1 k_5+ \nu^3 \sigma_2 k_6
\end{align*}
where we defined 
\begin{align*}
& \sigma_1 = \frac{1}{2}  b_0^2 + b_1\\
& \sigma_2 = \frac{1}{6}  b_0^3 + b_0 b_1 + b_2.
\end{align*}

Consider now the integral equations
\begin{equation}\label{pqifbm}
p^\pm_j (t)  = \pm \frac 1 \pi \int_0^\infty \frac{h_0(s)e^{-\nu s}}{s+t} p^\pm_j(s)ds+t^j, \quad t>0, \quad j\in \{0,1,2,3,4\},
\end{equation}
where $h_0(s):= h(s\nu)$ with $h(s)$ being defined as in \eqref{hdefine}. As in Lemma \ref{lem5.5}, the integral operator in the 
right hand side is a contraction on $L^2(0,\infty)$. Consequently equations \eqref{pqifbm} have unique solutions, such that functions 
$p^\pm_j(t)-t^j$ belong to $L^2(0,\infty)$. 
Since $S(-t)$ and $D(-t)$ are square integrable at the origin, by linearity we have
\begin{equation}\label{Sifbm}
\begin{aligned}
S(z\nu)   =\;  & 
k_1 p^+_0(-z) -  k_3\nu p^+_1(-z)  + \\
&
 k_4 \nu^2 \Big(   -     b_0  p^+_1(-z)+p^+_2(-z)\Big)  +\\
&
k_5 \nu^3 \Big( -    \sigma_1  p^+_1(-z) +   b_0    p^+_2(-z)-   p^+_3(-z) \Big) +\\
& 
k_6 \nu^4
\Big( -   \sigma_2  p^+_1(-z) +    \sigma_1  p^+_2(-z)
-    b_0    p^+_3(-z) + p^+_4(-z)\Big)
\end{aligned}
\end{equation}
and 
\begin{equation}\label{Difbm}
\begin{aligned}
D(z\nu)  =\; & k_2 p^-_0(-z)+k_3 \nu p^-_1(-z) +\\
&  
k_4 \nu^2 \Big( b_0    p^-_1(-z)   -  p^-_2(-z)\Big) + \\
& 
k_5 \nu^3\Big(-  \sigma_1   p^-_1(-z)  +  b_0    p^-_2(-z)-    p^-_3(-z) \Big)  + \\
&
 k_6 \nu^4\Big(   -   \sigma_2 p^-_1(-z)+   \sigma_1 p^-_2(-z)  - b_0    p^-_3(-z) +p^-_4(-z)
\Big)
\end{aligned}
\end{equation}
where the domain of $p^\pm_j(z)$ is extended to the cut plane by replacing $t$ with $z\in \mathbb{C}\setminus \Real_{<0}$ in \eqref{pqifbm}.
Now plugging \eqref{Sifbm} and \eqref{Difbm} into  definition \eqref{SzDz} and letting  
$
a^\pm_j(z) := p^+_j(z)\pm p^-_j(z) 
$
we obtain  
\begin{equation}\label{phi0ifbm2}
\frac{\Phi_0(z\nu)}{X(z\nu) }= 
k_1 \xi_1(-z) + k_2 \xi_2(-z)    +k_3 \nu \xi_3(-z) + k_4 \nu^2 \xi_4(-z)    + k_5 \nu^3 \xi_5(-z) + k_6 \nu^4\xi_6(-z)
\end{equation}
\begin{equation}\label{phi1ifbm2}
\frac {\Phi_1(z \nu )}{X(z \nu)}   = 
k_1 \eta_1(-z) +k_2 \eta_2(-z) +k_3 \nu \eta_3(-z)+ k_4 \nu^2 \eta_4(-z) + k_5 \nu^3 \eta_5(-z) +k_6 \nu^4\eta_6(-z)
\end{equation}
where  
\begin{equation}\label{xiifBm}
\begin{aligned}
\xi_1(z) & := \phantom{+} p^+_0(z) \\
\xi_2(z) & := \phantom{+} p^-_0(z) \\
\xi_3(z) & := -a^-_1(z) \\
\xi_4(z) & :=  -     b_0 a^-_1(z)  + a^-_2(z) \\
\xi_5(z) & :=   -    \sigma_1 a^+_1(z)   +   b_0  a^+_2(z)-   a^+_3(z) \\
\xi_6(z) & := -   \sigma_2 a^+_1(z) +    \sigma_1  a^+_2(z) -    b_0    a^+_3(z) + a^+_4(z)
\end{aligned}
\end{equation}
and
\begin{equation}\label{etaifBm}
\begin{aligned}
\eta_1(z) & := \phantom{+} p^+_0(z)\\
\eta_2(z) & := -p^-_0(z) \\
\eta_3(z) & := -   a^+_1(z) \\
\eta_4(z) & := -     b_0  a^+_1(z)   +a^+_2(z) \\
\eta_5(z) & :=  -    \sigma_1  a^-_1(z) +   b_0    a^-_2(z)   -  a^-_3(z) \\
\eta_6(z) & :=  -   \sigma_2  a^-_1(z)  +    \sigma_1  a^-_2(z)-    b_0   a^-_3(z) + a^-_4(z).
\end{aligned}
\end{equation}
In terms of the objects, introduced above, conditions \eqref{algcifbm} take the form of the system of linear equations  
\begin{equation}\label{alg2cifbm}
\begin{aligned}
& k_1 \gamma_{1,1} + k_2 \gamma_{1,2} + k_3 \nu \gamma_{1,3} + k_4 \nu^2 \gamma_{1,4} + k_5 \nu^3 \gamma_{1,5} + k_6 \nu^4 \gamma_{1,6} = 0\\
& k_1 \gamma_{2,1} + k_2 \gamma_{2,2} + k_3 \nu \gamma_{2,3} + k_4 \nu^2 \gamma_{2,4} + k_5 \nu^3 \gamma_{2,5} + k_6 \nu^4 \gamma_{2,6} = 0\\
& k_1 \gamma_{3,1} + k_2 \gamma_{3,2} + k_3 \nu \gamma_{3,3} + k_4 \nu^2 \gamma_{3,4} + k_5 \nu^3 \gamma_{3,5} + k_6 \nu^4 \gamma_{3,6} = 0\\
\end{aligned}
\end{equation}
where we defined 
\begin{equation}
\label{gammaij}
\begin{aligned}
& 
\gamma_{1,j} := \xi_j(-i)+ e^{-i \nu }\frac{X(-\nu i)}{X(\nu i)}\eta_j(i ) \\
& 
\gamma_{2,j} :=  \xi_j(-e^{\phi i})
+e^{ -  \nu   e^{\phi i}}\frac {X(-\nu e^{\phi i})}{X(\nu e^{\phi i})}\eta_j(e^{\phi i}) \\
&
\gamma_{3,j} := \eta_j( -e^{-\phi i})+e^{ -  \nu   e^{-\phi i}}\frac{X(-\nu e^{-\phi i})}{X(\nu e^{-\nu \phi i})}\xi_j( e^{-\phi i})
\end{aligned}
\end{equation}
and    
$
\phi = \displaystyle \frac{\pi}{2}\frac{1-\alpha}{5-\alpha}.
$
Since $k_1, k_2, ...,k_6$ and $\nu$ are real, the system \eqref{alg2cifbm} comprise of six equations with real coefficients. 
Let $M(\nu)$ denote the matrix of these coefficients: 
\begin{equation}\label{Mij}
\begin{aligned}
M_{1,j}(\nu) = \Re\{\gamma_{1,j}\},\quad & M_{2,j}(\nu) = \Im\{\gamma_{1,j}\} \\
M_{3,j}(\nu) = \Re\{\gamma_{2,j}\},\quad & M_{4,j}(\nu) = \Im\{\gamma_{2,j}\} \\
M_{5,j}(\nu) = \Re\{\gamma_{3,j}\},\quad & M_{6,j}(\nu) = \Im\{\gamma_{3,j}\}
\end{aligned}
\end{equation}
Nontrivial solutions are possible if and only if
\begin{equation}
\label{algcondifBm}
\det\{M(\nu)\}=0.
\end{equation}
and thus we arrive at the following equivalent formulation for the eigenproblem:

\medskip 

\begin{lem}\label{lem6.4}
Let $(p^\pm_0, ..., p^\pm_4, \nu)$ with $\nu>0$ be a solution of the system, which consists of the integral equations \eqref{pqifbm} 
and the algebraic equations \eqref{algcondifBm}. Let $\varphi$ be defined by the Laplace transform, given by the formula \eqref{phizifbm}, 
where $\Phi_0(z)$ and $\Phi_1(z)$ are given by \eqref{phi0ifbm2}-\eqref{phi1ifbm2} and let $\lambda$ be defined 
by \eqref{lambdanuifbm}.
Then the pair $(\lambda, \varphi)$ solves eigenproblem \eqref{eigifBm}. Conversely, any solution $(\lambda, \varphi)$ of 
\eqref{eigifBm} defines a solution to the above integro-algebraic system. 
\end{lem}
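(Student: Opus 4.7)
The plan is to verify both directions of the equivalence by following the construction laid out in the preceding subsections, which already contains nearly all the ingredients; the lemma is essentially a clean statement of what that construction accomplishes.

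For the forward direction, I would start with a solution $(\lambda,\varphi)$ of \eqref{eigifBm} and invoke the derivation leading to \eqref{phizifbm}: passing to $\psi$, applying the identity \eqref{Gfla}, computing the Laplace transform in two different ways and matching, yields the representation of $\widehat\varphi$ as a rational combination involving the sectionally holomorphic functions $\Phi_0,\Phi_1$ defined in \eqref{phi01ifbm} and the structural function $\Lambda$ of Lemma \ref{lem62}. Since $\widehat\varphi$ is entire, the six zeros $\pm z_0,\pm z_\pm$ of $\Lambda$ must be removable, which forces \eqref{algcifbm}, and the jump across $\Real_{>0}$ must vanish, which (using the symmetries \eqref{conjp}-\eqref{prop} as in Section~5.1.2) produces the Riemann boundary condition on $\Phi_0,\Phi_1$. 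With the canonical solution $X(z)$ of \eqref{Xz_ifbm} I define $S,D$ as in \eqref{SzDz}; the a priori growth \eqref{phi01growthifbm} together with \eqref{Xinfifbm} dictates the degrees of the polynomial parts $P_S,P_D$, and matching coefficients determines $l_1,\dots,l_4,m_1,\dots,m_4$ in terms of $k_3,\dots,k_6$. Setting $z=-t$ in the Sokhotski--Plemelj representation then yields, by linearity, that the restrictions of $S,D$ to the negative axis are linear combinations of the unique $L^2$-solutions $p_j^\pm$ of \eqref{pqifbm}. Plugging back into \eqref{algcifbm} and separating real and imaginary parts produces exactly the six-by-six homogeneous linear system \eqref{alg2cifbm} with matrix \eqref{Mij}; since $k_1,\dots,k_6$ cannot all vanish (otherwise $\varphi\equiv 0$), the determinant condition \eqref{algcondifBm} must hold.

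For the backward direction, I would start with $(p_0^\pm,\dots,p_4^\pm,\nu)$ satisfying the integral equations and \eqref{algcondifBm}. The determinant condition provides a nontrivial real vector $(k_1,\dots,k_6)$ in the kernel of $M(\nu)$; with this choice the functions $\Phi_0,\Phi_1$ defined by \eqref{phi0ifbm2}-\eqref{phi1ifbm2} are sectionally holomorphic on $\mathbb{C}\setminus\Real_{>0}$ (since $p_j^\pm$ extend to the cut plane), have the prescribed growth at infinity by construction, and satisfy both the Riemann boundary condition and, by the linear system, the cancellation conditions \eqref{algcifbm} at all six zeros of $\Lambda$. Consequently the right-hand side of \eqref{phizifbm} has no singularities and defines an entire function $\widehat\varphi(z)$; inverting the Laplace transform as in Lemma \ref{lemeigf} (deforming the contour to wrap the cut and picking up the residues at $\pm z_0,\pm z_\pm$) produces a function $\varphi$, and one verifies that $\psi(x):=\int_x^1\!\int_y^1\varphi(u)du\,dy$ satisfies the generalized eigenproblem \eqref{geneig_ifBm} with $\lambda$ given by \eqref{lambdanuifbm}, hence $\varphi$ solves \eqref{eigifBm}.

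The substantive step is really the bookkeeping in the forward direction: one must check that the six cancellation conditions in \eqref{algcifbm}, after substitution of the representations \eqref{phi0ifbm2}-\eqref{phi1ifbm2}, organise themselves neatly into the three complex equations whose real and imaginary parts assemble into the matrix $M(\nu)$ of \eqref{Mij}, and in particular that no additional constraints are hidden among them. The symmetries $\overline{z_0}=-z_0$, $\overline{z_+}=-z_-$ ensure that the conditions at $-z_0,-z_+,-z_-$ are the complex conjugates of those at $z_0,z_+,z_-$, which is exactly why one obtains three (rather than six) independent complex relations; I expect this pairing to be the only delicate combinatorial point. The uniqueness and $L^2$-integrability of the solutions to \eqref{pqifbm} follow from the contraction argument of Lemma \ref{lem5.5} applied to the present $h_0$, which I would state by reference rather than redo.
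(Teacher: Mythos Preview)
Your proposal is correct and follows essentially the same approach as the paper. In fact, the paper gives no separate proof of this lemma at all: it is stated as a summary of the construction carried out in the preceding subsections (the derivation of \eqref{phizifbm}, the removal of singularities leading to \eqref{algcifbm} and the Riemann boundary conditions, the reduction via $X(z)$ and $S,D$ to the integral equations \eqref{pqifbm}, and the assembly of \eqref{algcifbm} into the matrix $M(\nu)$), so your outline is exactly what is intended. Your observation that the conjugate pairings $\overline{z_0}=-z_0$, $\overline{z_+}=-z_-$ reduce the six conditions in \eqref{algcifbm} to three complex equations is the correct reason why only $\gamma_{1,j},\gamma_{2,j},\gamma_{3,j}$ appear in \eqref{alg2cifbm}.
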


\subsubsection{Properties of the integro-algebraic system}
  
As mentioned above, equations \eqref{pqifbm} have unique solutions, such that $p^\pm_j(t)-t^j$ belong to $L^2(0,\infty)$.
The following lemma derives several estimates useful in asymptotic analysis of the integro-algebraic 
system of Lemma \ref{lem6.4}: 

\begin{lem} \label{lem6.5}
For any $\alpha_0 \in (0,1)$ there exist constants $\nu'$ and $C$, such that for all $\nu \geq \nu'$ and
$\alpha \in [\alpha_0,1]$ 
\begin{equation}\label{pqasymptotic}
\big|p^\pm_j(z/\nu)-(z/\nu)^j\big| \leq C \nu^{-(j+1)}, \quad 0 \leq j \leq 4,\quad z\in \{\pm z_0, \pm z_+, \pm z_-\}
\end{equation}
and for all $\tau>0$
\begin{equation}\label{ppmtau}
\big|p^\pm_j(\tau)-\tau^j\big| \leq C \nu^{-(j+1)} \tau^{-1}, \quad 0 \leq j \leq 4.
\end{equation}

\end{lem}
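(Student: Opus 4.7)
The plan is to carry out the same scheme as in Lemma 5.7 of \cite{ChK}, but now for polynomial source terms of all degrees $j\in\{0,1,2,3,4\}$ and evaluated both at the six zeros of $\Lambda$ scaled by $\nu^{-1}$ and at arbitrary positive reals. Set $q^\pm_j(t):=p^\pm_j(t)-t^j$; substituting into \eqref{pqifbm} shows that $q^\pm_j$ satisfies
$$
q^\pm_j = \pm A q^\pm_j + g^\pm_j,\qquad g^\pm_j(t):=\pm\frac 1\pi\int_0^\infty\frac{h_0(s)e^{-\nu s}}{s+t}\,s^j\,ds,
$$
where $A$ is the integral operator with kernel $\frac{1}{\pi}\frac{h_0(s)e^{-\nu s}}{s+t}$. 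By the analogue of Lemma \ref{lem5.5} applied in the present setting, $h_0$ is bounded uniformly in $\nu$ and $\alpha\in[\alpha_0,1]$, and $A$ is a strict contraction on $L^2(0,\infty)$ for all $\nu\ge\nu'$, so $I\mp A$ is invertible with uniformly bounded inverse.

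First I would estimate the source $g^\pm_j(t)$ pointwise. Using the uniform bound on $h_0$ and $\int_0^\infty s^j e^{-\nu s}ds = j!\,\nu^{-(j+1)}$, one obtains
$$
\bigl|g^\pm_j(t)\bigr|\;\le\;\frac{\|h_0\|_\infty}{\pi}\int_0^\infty\frac{s^j e^{-\nu s}}{|s+t|}\,ds\;\le\;\frac{C}{\nu^{j+1}\,\mathrm{dist}(t,\mathbb{R}_{\le 0})}.
$$
For $t=z/\nu$ with $z\in\{\pm z_0,\pm z_+,\pm z_-\}$ one has $|t|=1$, and since the arguments of the six zeros listed in Lemma \ref{lem62}\,(a) are bounded away from $\pm\pi$ by quantities depending only on $\alpha_0$, the distance to the negative real axis is bounded below uniformly, yielding \eqref{pqasymptotic}. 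For $t=\tau>0$ the elementary inequality $|s+\tau|\ge\tau$ gives the required $\tau^{-1}$ factor directly, producing \eqref{ppmtau}.

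Next I would upgrade to bounds on $q^\pm_j$ itself. Since $\|g^\pm_j\|_{L^2}$ also has the order $O(\nu^{-(j+1)})$ (Cauchy--Schwarz combined with the same source estimate), contraction gives $\|q^\pm_j\|_{L^2}\le C\nu^{-(j+1)}$, up to a harmless power of $\nu$ that can be absorbed. Plugging this back into $q^\pm_j=\pm A q^\pm_j+g^\pm_j$ and estimating $Aq^\pm_j$ at the relevant value of $t$ by Cauchy--Schwarz,
$$
\bigl|(Aq^\pm_j)(t)\bigr|\le \frac{\|h_0\|_\infty}{\pi}\Bigl(\int_0^\infty\frac{e^{-2\nu s}}{|s+t|^2}\,ds\Bigr)^{1/2}\|q^\pm_j\|_{L^2},
$$
shows that the operator term is of strictly smaller order than $g^\pm_j(t)$ at the points $t$ of interest, so the pointwise bound on $g^\pm_j$ controls $q^\pm_j$.

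The main obstacle is technical rather than conceptual: it is keeping track of the $\alpha$-dependence of the distance $\mathrm{dist}(z/\nu,\mathbb{R}_{\le 0})$ for the zero $-z_-$, whose argument $-\pi+\tfrac{\pi}{2}\tfrac{1-\alpha}{5-\alpha}$ approaches $-\pi$ as $\alpha\uparrow 1$. The estimates are uniform on any $\alpha$-interval bounded away from $1$, consistent with the role of $\alpha_0$ in the statement, and the verbatim analogue of the argument in Lemma 5.7 of \cite{ChK} then closes out the proof.
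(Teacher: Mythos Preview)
Your overall scheme is exactly what the paper does (it simply refers to Lemma~5.7 in \cite{ChK}), so there is no disagreement of method. But the closing paragraph contains a genuine error that leaves a gap.

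First, you have misread the role of $\alpha_0$: the statement requires uniformity for $\alpha\in[\alpha_0,1]$, so $\alpha_0$ is a \emph{lower} bound keeping $\alpha$ away from $0$, not from $1$. Thus your remark that ``the estimates are uniform on any $\alpha$-interval bounded away from $1$, consistent with the role of $\alpha_0$'' is backwards. Second, you misidentify the degenerate zero: the point $-z_-/\nu=e^{-i\phi}$ has argument $-\phi\to 0$, which is harmless. The points that approach the negative real axis as $\alpha\uparrow 1$ are $-z_+/\nu=-e^{i\phi}$ and $z_-/\nu=-e^{-i\phi}$, both tending to $-1$.

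At those points your crude bound $|g^\pm_j(t)|\le C\nu^{-(j+1)}/\mathrm{dist}(t,\mathbb R_{\le 0})$ gives only $C\nu^{-(j+1)}/\sin\phi$, which is \emph{not} uniform as $\alpha\to 1$. The repair is to notice that the singularity of $s\mapsto (s+t)^{-1}$ at these $t$ sits at $s=\cos\phi\in[\cos(\pi/10),1]$, where the factor $e^{-\nu s}$ is exponentially small. Splitting the integral at $s=1/2$, for $s<1/2$ one has $|s+t|=|s-e^{\pm i\phi}|\ge \cos\phi-\tfrac12\ge\cos(\pi/10)-\tfrac12>0$ uniformly, giving the $O(\nu^{-(j+1)})$ contribution; for $s>1/2$ the integrand is dominated by $e^{-\nu/4}$ times something with at worst a $1/\sin\phi$ factor, which is still $o(\nu^{-(j+1)})$ uniformly in $\alpha\in[\alpha_0,1)$. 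The same splitting handles your Cauchy--Schwarz estimate of $(Aq^\pm_j)(t)$. With this correction the argument closes exactly as you outline.
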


\begin{proof}
The proof is analogous to Lemma 5.7 in \cite{ChK}.
\end{proof}

\subsubsection{Inversion of the Laplace transform}

The eigenfunctions are recovered by inverting the Laplace transform  \eqref{phizifbm}:

\begin{lem}\label{lem-phix}
Let $(\Phi_0, \Phi_1, \nu)$ satisfy the integro-algebraic system introduced in Lemma \ref{lem6.4}, then the function 
\begin{subequations}
\begin{align}\label{phixifBm-a}
\varphi(x)= & 
  \frac 1 \nu \frac{c_\alpha}{\lambda \Gamma(\alpha)}\frac 2 {5   -   \alpha}\Re\bigg\{ e^{i\nu x} \Phi_0(i\nu)   i \bigg\} + \\
\label{phixifBm-b} &  
\frac 1 {\nu}\frac {c_\alpha}{\lambda\Gamma(\alpha)} \frac 1{\pi} 
\int_{0}^\infty  \frac{\sin \theta_0(u)}{  \gamma_0(u)} \left( e^{-u\nu(1-x)}\Phi_1(-u\nu)+e^{-u\nu x}  \Phi_0(-u\nu) \right) du 
+   
\\
\label{phixifBm-c} &
\frac 1 \nu \frac{c_\alpha}{\lambda \Gamma(\alpha)}\frac 2 {5   -   \alpha}\Re
\bigg\{
e^{-\nu e^{\phi i} x} \Phi_0(-\nu e^{\phi i})  e^{-\phi i}   
-  
e^{-\nu e^{\phi i}(1-x)}\Phi_1(-\nu e^{\phi i})   e^{-\phi i}  
\bigg\}
\end{align}
\end{subequations}
with  
$
\gamma_0(u)  
:=
\big|u^2-u^{\alpha-3}  e^{\frac{1-\alpha}{2}\pi i}  \big|
$
and 
$
\phi = \displaystyle \frac{\pi}{2}\frac{1-\alpha}{5-\alpha},
$
solves eigenproblem \eqref{eigifBm}. 
\end{lem}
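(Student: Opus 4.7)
The plan follows the strategy used in Lemma \ref{lemeigf} for the fractional Ornstein--Uhlenbeck process, but now with the extra complication of four additional zeros $\pm z_+, \pm z_-$ of $\Lambda(z)$ lying off the imaginary axis. I would start by writing the inverse Laplace transform along the imaginary axis as
\begin{equation*}
\varphi(x) = -\frac{1}{2\pi i}\lim_{R\to\infty}\int_{-iR}^{iR}\big(f_0(z)+f_1(z)\big)\,dz,
\end{equation*}
where $f_0(z) := e^{zx}\big(\psi'(0)+\psi(0)z + \Phi_0(z)/\Lambda(z)\big)$ and $f_1(z) := e^{z(x-1)}\Phi_1(-z)/\Lambda(z)$. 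The polynomial correction in $f_0$ is precisely the one needed so that, by the a priori expansions \eqref{phi01growthifbm} and the leading behaviour of \eqref{Lambdazifbm}, we have $f_0(z) = e^{zx}\cdot O(z^{-1})$ as $z\to\infty$ in the left half-plane, while $f_1(z) = e^{z(x-1)}\cdot O(z^{-1})$ as $z\to\infty$ in the right half-plane. This guarantees that large semicircular arcs contribute nothing in the relevant half-planes.

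Next, I would deform the contour for $f_0$ into the left half-plane and for $f_1$ into the right half-plane, picking up residues together with keyhole integrals around the branch cuts of $\Lambda$. For $f_0$ the enclosed zeros of $\Lambda$ are the triple $\{-z_0,-z_+,z_-\}$ (with $z_-=\overline{-z_+}$), plus the cut contribution on $\Real_{<0}$; for $f_1$ the enclosed zeros are $\{z_0,z_+,-z_-\}$ with the cut contribution on $\Real_{>0}$. The compatibility conditions \eqref{algcifbm} give $\Res(f_0,\pm z_0)=-\Res(f_1,\pm z_0)$, so the imaginary-axis residues of $f_1$ can be traded for those of $f_0$, which collapses the four $\pm z_0$ residue contributions into the single conjugate pair written in term (a). All residues are evaluated using the identity
\begin{equation*}
\Lambda'(z_k) = (5-\alpha)\,\tfrac{\lambda\Gamma(\alpha)}{c_\alpha}\,z_k,
\end{equation*}
which follows immediately from $\Lambda(z)=Az^2-Bz^{\alpha-3}$ combined with the defining equation $Az_k^2=Bz_k^{\alpha-3}$ of the zeros. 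Pairing $(z_0,-z_0)$, $(-z_+,z_-)$, and $(z_+,-z_-)$ and taking real parts yields exactly the oscillatory term (a) and the exponential boundary-layer term (c).

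For the cut integrals, a direct computation from \eqref{Lpmt_ifBm} shows $\Lambda^+(t)-\Lambda^-(t) = -2i\pi t^{\alpha-3}$ for $t>0$ and $\Lambda^+(-s)-\Lambda^-(-s)= +2i\pi s^{\alpha-3}$ for $s>0$, which together with the identification $|\Lambda^+(t)|\sin\theta(t)$ and the evenness $|\Lambda^+(-t)|=|\Lambda^+(t)|$, $\theta(-t)=-\theta(t)$ from Lemma \ref{lem62}, allows both keyhole contributions to be rewritten as integrals on $(0,\infty)$ of $\sin\theta_0(u)/\gamma_0(u)$ against $e^{-u\nu x}\Phi_0(-u\nu)$ and $e^{-u\nu(1-x)}\Phi_1(-u\nu)$ after the change of variables $t=u\nu$. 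Combining these produces term (b).

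The main obstacle is the careful bookkeeping when deforming around six poles and two branch cuts simultaneously. In particular, the imaginary-axis poles $\pm z_0$ must be handled by indentation together with the compatibility relation \eqref{algcifbm}, and one must verify that the four off-axis zeros $\pm z_+,\pm z_-$ indeed lie strictly in the open half-planes dictated by their arguments $\pm\phi$ and $\pm(\pi-\phi)$ from Lemma \ref{lem62}, so that their residue assignments to $f_0$ versus $f_1$ are unambiguous. Equally delicate is verifying that the $\Real_{<0}$ keyhole integral for $f_0$ and the $\Real_{>0}$ one for $f_1$ combine into a single integral on $(0,\infty)$; this step uses the parity of $\theta$ and $|\Lambda^+|$ together with the boundary relations \eqref{Hp} to ensure that the stray discontinuity of $\Phi_0$ on $\Real_{>0}$ (which might appear to produce an extra term) is absorbed by the corresponding cut contribution from $f_1$ on the same axis.
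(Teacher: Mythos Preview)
Your approach is essentially the paper's: split the integrand as $f_0+f_1$, deform $f_0$ left and $f_1$ right, collect residues via $\Lambda'(z_k)=(5-\alpha)\tfrac{\lambda\Gamma(\alpha)}{c_\alpha}z_k$, and combine the two keyhole integrals into the single $\sin\theta_0/\gamma_0$ integral on $(0,\infty)$. That is exactly what the paper does.

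The paper's bookkeeping is slightly cleaner than yours in two places. First, it indents \emph{both} $\pm z_0$ into the right half-plane, so the left contour for $f_0$ encloses the four poles $\{z_0,-z_0,z_-,-z_+\}$ while the right contour for $f_1$ encloses only $\{z_+,-z_-\}$; there is then no need to invoke the compatibility relation \eqref{algcifbm} to ``trade'' residues between $f_0$ and $f_1$ at $\pm z_0$. Second, your worry about a ``stray discontinuity of $\Phi_0$ on $\Real_{>0}$'' does not arise: when $f_0$ is pushed into the left half-plane it never meets $\Real_{>0}$, and on $\Real_{<0}$ the function $\Phi_0$ is holomorphic (its cut is on $\Real_{>0}$), so only the jump of $\Lambda$ contributes there; likewise $\Phi_1(-z)$ is holomorphic for $z\in\Real_{>0}$, so again only $\Lambda$ jumps in the right keyhole. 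The boundary relations \eqref{Hp} play no role in this lemma.
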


\begin{proof}
With the singularities being removed,  the expression \eqref{phizifbm} is an entire function and therefore the inversion can be carried out on the imaginary axis: 
\begin{align*}
\varphi(x) = & -\frac 1{2\pi i}\int_{-iR}^{iR} \left(
 \psi^{\prime}(0)+\psi(0)z+\frac { \Phi_0(z)}{\Lambda(z)}+\frac {e^{-z}\Phi_1(-z)}{\Lambda(z)} 
\right)e^{zx}dz =\\
& -\frac 1 {2\pi i} \int_{-i\infty}^{i\infty} \big(f_1(z)+f_0(z)\big)dz,
\end{align*}
where 
$$
f_1(z):= e^{z(x-1)}\frac { \Phi_1(-z)}{\Lambda(z)} \quad \text{and} \quad 
f_0(z):= e^{zx}\left(
\psi^{\prime}(0)+\psi(0)z+\frac { \Phi_0(z)}{\Lambda(z)}
\right).
$$
\begin{figure}
\begin{tikzpicture}
\draw[help lines,->] (-3,0) -- (3,0) coordinate (xaxis);
\draw[help lines,->] (0,-3) -- (0,3) coordinate (yaxis);


\path[draw,line width=0.8pt,
decoration={markings,
mark=at position 0.75cm with {\arrow[line width=1pt]{<}},
mark=at position 3.5cm with {\arrow[line width=1pt]{<}},
mark=at position 5.7cm with {\arrow[line width=1pt]{<}}
},
postaction=decorate] (0,0.1) -- (2,0.1) arc (0:90:2) -- (0,1.2) arc (90:-90:0.1) -- (0,0.1);

\path[draw,line width=0.8pt,
decoration={markings,
mark=at position 0.75cm with {\arrow[line width=1pt]{>}},
mark=at position 3.5cm with {\arrow[line width=1pt]{>}},
mark=at position 5.7cm with {\arrow[line width=1pt]{>}}
},
postaction=decorate] (0,-0.1) -- (2,-0.1) arc (0:-90:2) -- (0,-1.0) arc (90:-90:0.1) -- (0,-0.1);

\path[draw,line width=0.8pt,
decoration={markings,
mark=at position 0.75cm with {\arrow[line width=1pt]{<}},
mark=at position 3.5cm with {\arrow[line width=1pt]{<}},
mark=at position 5.7cm with {\arrow[line width=1pt]{<}}
},
postaction=decorate] (0,0.1) -- (-2,0.1) arc (180:90:2) ; 

\path[draw,line width=0.8pt,
decoration={markings,
mark=at position 0.75cm with {\arrow[line width=1pt]{>}},
mark=at position 3.5cm with {\arrow[line width=1pt]{>}},
mark=at position 5.7cm with {\arrow[line width=1pt]{>}}
},
postaction=decorate] (0,-0.1) -- (-2,-0.1) arc (180:275:2) ; 

\draw (0,1.1) circle (1pt);
\draw (0,-1.1) circle (1pt);
\draw (-0.89,0.65) circle (1pt);
\draw (0.89,0.65) circle (1pt);
\draw (0.89,-0.65) circle (1pt);
\draw (-0.89,-0.65) circle (1pt);

\node[below] at (xaxis) {$\mathrm{Re}(z)$};
\node[left] at (yaxis) {$\mathrm{Im}(z)$};
\node at (-0.3, 1.1) {$z_0$};
\node at (-0.45, -1.1) {$-z_0$};
\node at (-1.1,0.8) {$z_-$};
\node at (1.1,0.8) {$z_+$};
\node at (-1.1,-0.8) {$-z_+$};
\node at (1.1,-0.8) {$-z_-$};
\node at (1.8,1.8) {$C_R^+$};
\node at (1.8,-1.8) {$C_R^-$};
\node at (-1.8,1.8) {${^+}C_R$};
\node at (-1.8,-1.8) {${^-}C_R$};
\node at (1.5,0.3) {$\Sigma_+$};
\node at (1.5,-0.35) {$\Sigma_-$};
\node at (-1.5,0.3) {${_+}\Sigma$};
\node at (-1.5,-0.35) {${_-}\Sigma$};
\node at (0.45, 1.5) {$L^+_{\delta,R}$};
\node at (0.45, -1.5) {$L^-_{\delta,R}$};
\end{tikzpicture}
  \caption{\label{fig1} Integration contour, used for the Laplace transform inversion: the outer circular arcs are of radius $R$ and the half circles around the poles
  have radius $\delta$ }
\end{figure}
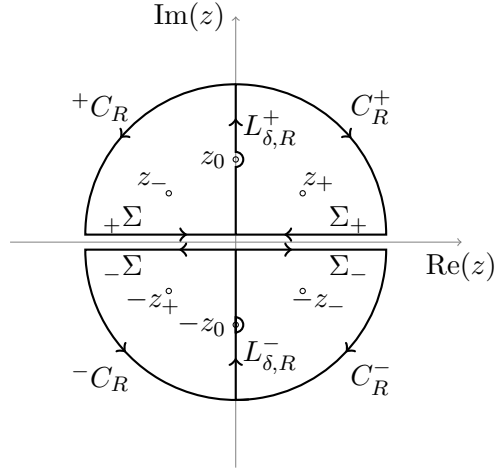

Integrating $f_1(z)$ and $f_0(z)$ over the contours on Figure \ref{fig1} in the right and left half-planes respectively gives 
\begin{align*}
&
\int_{L^+_{\delta,R}} f_1(z)dz + \int_{C_R^+} f_1(z)dz + \int_{\Sigma_+}f_1(z)dz = -2\pi i\Res(f_1, z_+) \\
&
\int_{L^-_{\delta,R}} f_1(z)dz + \int_{\Sigma_-}f_1(z)dz + \int_{C_R^-}f_1(z)dz = - 2\pi i\Res(f_1,- z_-)
\end{align*}
and 
\begin{align*}
& 
\int_{L^+_{\delta,R}}f_0(z)dz +\int_{{^+}C_R}f_0(z)dz + \int_{{_+}\Sigma}f_0(z)dz =  2\pi i\Res(f_0, z_0)+2\pi i\Res(f_0, z_-)\\
&
\int_{L^-_{\delta,R}}f_0(z)dz + \int_{{_-}\Sigma}f_0(z)dz +\int_{{^-}C_R}f_0(z)dz  = 2\pi i\Res(f_0, -z_0)+2\pi i\Res(f_0, -z_+)
\end{align*}

Taking $\delta \to 0$ and $R\to\infty$ and applying Jordan's lemma we get 
\begin{align*}
&
 \int_{0}^{i\infty} f_1(z)dz  - \int_{0}^\infty f_1^+(t)dt = -2\pi i\Res(f_1, z_+) \\
&
\int_{-i\infty}^0 f_1(z)dz + \int_{0}^\infty f_1^-(t)dt    = - 2\pi i\Res(f_1, -z_-)
\end{align*}
and 
\begin{align*}
& 
 \int_{0}^{i\infty} f_0(z)dz   + \int_{-\infty}^0f_0^+(t)dt =  2\pi i\Res(f_0, z_0)+2\pi i\Res(f_0, z_-)\\
&
\int_{-i\infty}^0 f_0(z)dz - \int_{-\infty}^0f_0^-(t)dt    = 2\pi i\Res(f_0, -z_0)+2\pi i\Res(f_0, -z_+)
\end{align*}
Summing up all these equations yields 
\begin{align*}
&
\frac 1{2\pi i} \int_{-i\infty}^{i\infty} \big( f_1(z)+f_2(z)\big)dz  = \\
&  \frac 1{2\pi i} \int_{0}^\infty \big(f_1^+(t) -f_1^-(t)\big)dt  
 +
\frac 1{2\pi i} \int_{ 0}^\infty \big(f_0^-(-t)-f_0^+(-t)\big)dt+\\
 &
 \Res\big(f_0, z_0\big)+ \Res\big(f_0, -z_0\big) + \Res\big(f_0, z_-\big)+ \Res\big(f_0, -z_+\big) 
  - \Res\big(f_1, z_+\big) -  \Res\big(f_1, -z_-\big).
\end{align*}
Further by symmetries \eqref{conjp}-\eqref{absL} 
$$
\begin{aligned}
&
f_1^+(t)  -f_1^-(t) 
= -e^{-t(1-x)}\Phi_1(-t) \frac{2i\sin \theta(t)}{\gamma(t)} \\
&
f_0^-(-t)  -f_0^+(-t)
=- e^{-tx}  \Phi_0(-t)\frac{2i \sin \theta(t)}{\gamma(t)}
\end{aligned}
\qquad t>0
$$
where we defined $\gamma(t)=|\Lambda^+(t)|$.
The residues can be computed using expression \eqref{Lambdazifbm}:
\begin{align*}
&
\Res\big(f_0, z_0\big) 
= e^{i\nu x} \Phi_0(i\nu) \frac {c_\alpha}{\lambda\Gamma(\alpha)} \frac { 1}{\nu i } \frac 1 {5-\alpha}
\\
&
\Res\big(f_0, -z_0\big) 
= 
  e^{-i\nu x} \Phi_0(-i\nu ) \frac{c_\alpha}{\lambda \Gamma(\alpha)} \frac {1 }{ -\nu i}\frac 1 {5 -\alpha}
\\
&
\Res\big(f_0, z_-\big) 
= 
e^{-\nu e^{ -\phi i } x} \Phi_0(-\nu e^{ -\phi i }) \frac {c_\alpha}{\lambda\Gamma(\alpha)} 
\frac { 1}{-\nu  e^{ -\phi i } }\frac 1 { 5    -   \alpha }
\\
&
\Res\big(f_0, -z_+\big) 
= 
e^{-\nu e^{\phi i} x} \Phi_0(-\nu e^{\phi i})\frac{c_\alpha}{\lambda \Gamma(\alpha)} \frac { 1}{-\nu e^{\phi i}} \frac 1{5   -   \alpha}
\\
&
\Res\big(f_1, z_+\big) 
=
e^{\nu e^{\phi i}(x-1)}\Phi_1(-\nu e^{\phi i})\frac {c_\alpha}{\lambda\Gamma(\alpha)}\frac { 1}{\nu e^{\phi i} } 
\frac 1 {5   -   \alpha}
\\
&
\Res\big(f_1, -z_-\big) 
=
e^{\nu e^{ -\phi i }(x-1)} \Phi_1(-\nu e^{ -\phi i })\frac {c_\alpha} {\lambda\Gamma(\alpha)}\frac {1}{\nu  e^{ -\phi i }}
\frac 1 {5  -   \alpha}
\end{align*}
and hence
\begin{align*}
&  \Res\big(f_0, z_0\big)+ \Res\big(f_0, -z_0\big)+ \Res\big(f_0, z_-\big)+ \Res\big(f_0, -z_+\big) 
  - \Res\big(f_1, z_+\big) -  \Res\big(f_1, -z_-\big) =\\
&  
\frac 2 \nu \frac 1 {5   -   \alpha}\frac{c_\alpha}{\lambda \Gamma(\alpha)}\Re\left\{
 e^{i\nu x} \Phi_0(i\nu)   \frac { 1}{ i }  
-
e^{-\nu e^{\phi i} x} \Phi_0(-\nu e^{\phi i}) \frac { 1}{  e^{\phi i}}  
+  
e^{\nu e^{\phi i}(x-1)}\Phi_1(-\nu e^{\phi i}) \frac { 1}{  e^{\phi i} } 
\right\}.
\end{align*}
Assembling all parts together we obtain the expression in \eqref{phixifBm-a}-\eqref{phixifBm-c}.  
\end{proof}

\subsubsection{Asymptotic analysis}

The following lemma determines the asymptotics of algebraic part of the solutions to \eqref{algcondifBm} under a particular enumeration: 

\begin{lem} \label{lem6.15}
The integro-algebraic system from Lemma \ref{lem6.4} has countably many solutions, which can be enumerated so that 
\begin{equation}
\label{nunifBm}
\nu_n = \pi n +  \pi + \frac {1+\alpha}4\pi  + \arctan \Delta_\alpha + \frac{r_n(\alpha)}{n}, \quad n\to\infty
\end{equation}
where $\Delta_\alpha$ is given by \eqref{Deltaalpha} below and the residual $r_n(\alpha)$ is bounded uniformly in $n$ and $\alpha \in [\alpha_0,1]$ for any $\alpha_0 \in (0,1)$.
\end{lem}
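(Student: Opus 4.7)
The plan is to follow the same overall strategy as in Lemma \ref{lem7.7}, reducing the $6\times 6$ determinantal equation \eqref{algcondifBm} to a single scalar transcendental equation of the form $\tan \phi_\nu \simeq \Delta_\alpha$. The essential simplification here is that two of the three pole pairs of $\Lambda(z)$ lie off the imaginary axis: $z_\pm = \nu e^{\pm\phi i}$ with $\phi = \frac{\pi}{2}\frac{1-\alpha}{5-\alpha}\in(0,\pi/2)$, so that $\Re(z_+)=\nu\cos\phi>0$. Consequently the factors $e^{-\nu e^{\pm \phi i}}$ appearing in $\gamma_{2,j}$ and $\gamma_{3,j}$ decay exponentially in $\nu$, and only the pair coming from $\pm z_0=\pm i\nu$ (which contributes the oscillatory factor $e^{-i\nu}\,X(-i\nu)/X(i\nu)=-e^{-i\phi_\nu}$ with $\phi_\nu:=\nu+2\arg X_c(i\nu)$) retains a genuinely $\nu$-dependent component.

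First I would substitute the approximations $p^\pm_j(z/\nu)\simeq (z/\nu)^j$ from Lemma \ref{lem6.5} into the definitions \eqref{xiifBm}-\eqref{etaifBm} to evaluate every $\xi_j$, $\eta_j$ at the six points $\pm i$, $\pm e^{\phi i}$, $\pm e^{-\phi i}$, up to $O(\nu^{-1})$ residuals. Together with the estimate $|e^{-\nu e^{\pm\phi i}}|=e^{-\nu\cos\phi}$ and \eqref{X0iifBm}, this yields explicit leading-order expressions for the entries $\gamma_{1,j}$, $\gamma_{2,j}$, $\gamma_{3,j}$ in \eqref{gammaij}: rows $3$ through $6$ of $M(\nu)$ become, to leading order, constants in $\nu$ depending only on $\alpha$ through $b_0,\sigma_1,\sigma_2$ and the angle $\phi$, while the first two rows take the form $a_j-e^{-i\phi_\nu}b_j$ with computable $a_j,b_j$.

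Next I would expand $\det M(\nu)$ along rows $1$ and $2$ (the only rows containing $\nu$), producing an expression of the form
\begin{equation*}
\det M(\nu)\simeq A(\alpha)\cos\phi_\nu + B(\alpha)\sin\phi_\nu
\end{equation*}
with coefficients $A(\alpha)$, $B(\alpha)$ given by sums of $4\times 4$ minors of the decoupled block built from $\xi_j(-e^{\phi i})$, $\eta_j(-e^{-\phi i})$ and the constant parts of $\gamma_{1,j}$. The equation \eqref{algcondifBm} then reduces to $\tan\phi_\nu\simeq -A(\alpha)/B(\alpha)=:\Delta_\alpha$, with constants $b_k$ here taken from \eqref{bifbm} (appropriate to $\alpha\in(\alpha_0,1)$, differing in sign from the $H<\tfrac12$ case).

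The main obstacle will be the bookkeeping of the $6\times 6$ determinant and the identification of $\Delta_\alpha$ in closed form; unlike in Lemma \ref{lem7.7} the off-axis poles add two extra constraints, and one must verify that the exponentially small corrections from $\gamma_{2,j},\gamma_{3,j}$ and the $O(\nu^{-1})$ remainders from Lemma \ref{lem6.5} perturb the leading coefficients $A,B$ by at most $O(\nu^{-1})$, so that the residual in $\tan\phi_\nu=\Delta_\alpha+O(\nu^{-1})$ has $\nu$-derivative bounded in magnitude strictly below $1$ for all large $\nu$. Granted this, the implicit function theorem (equivalently, fixed-point iteration) supplies, for each sufficiently large integer $n$, a unique root $\nu_n$ with $\phi_{\nu_n}=\pi n+\arctan\Delta_\alpha+O(n^{-1})$. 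Since $\phi_\nu=\nu+2\arg X_c(i\nu)=\nu-\tfrac{1+\alpha}{4}\pi+O(\nu^{-1})$ by \eqref{X0iifBm} and \eqref{Xinfifbm}, inverting gives the asserted \eqref{nunifBm} up to an integer shift in the enumeration, the specific value $+\pi$ being fixed afterwards by the calibration argument against the standard (integrated Brownian) case $\alpha=1$.
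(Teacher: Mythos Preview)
Your proposal is essentially the paper's own argument: exploit the exponential decay of $e^{-\nu e^{\pm\phi i}}$ to reduce $\gamma_{2,j}\simeq\xi_j(-e^{\phi i})$ and $\gamma_{3,j}\simeq\eta_j(-e^{-\phi i})$, plug the estimates of Lemma~\ref{lem6.5} into \eqref{xiifBm}--\eqref{etaifBm}, assemble the leading $6\times6$ matrix (rows~1--2 carrying $c_\nu,s_\nu$, rows~3--6 constant in $\nu$), and collapse $\det M(\nu)=0$ into $\tan(\nu+2\arg X(i\nu))=\Delta_\alpha+R(\nu)$, then iterate.

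Two small remarks. First, your $\phi_\nu=\nu+2\arg X_c(i\nu)$ differs from the paper's angle $\nu+2\arg X(i\nu)$ by exactly $\pi$ (the factor $1/z$ in \eqref{Xz_ifbm} contributes $-\pi/2$ to $\arg X(i\nu)$); this only shifts the enumeration by one and is absorbed into the integer shift you already flag. Second, the $+\pi$ in \eqref{nunifBm} is \emph{not} fixed by calibration: it is simply the particular enumeration produced by the paper's choice of angle, and the subsequent calibration step shifts by a further $-2\pi$. So your last sentence slightly misattributes the source of the constant, but this does not affect correctness.
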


\begin{proof}
Formula \eqref{nunifBm} is obtained by asymptotic analysis of the equation \eqref{algcondifBm}, using the estimates of
Lemma \ref{lem6.5}. Let us first find the leading asymptotics of $\xi_j(i)$'s and $\eta_j(i)$'s, 
defined in \eqref{xiifBm}-\eqref{etaifBm}:
\begin{equation}\label{xijati}
\begin{aligned}
\xi_1(i) & \simeq  1   & \eta_1(i) & \simeq  1 \\
\xi_2(i) & \simeq  1   & \eta_2(i) & \simeq - 1  \\
\xi_3(i) & \simeq 0 
& \eta_3(i) & \simeq -   2i \\
\xi_4(i) & \simeq  0 
& \eta_4(i) & \simeq -   2i  b_0     - 2  \\
\xi_5(i) & 
\simeq 2i (1 -     \sigma_1)    -2   b_0     
& \eta_5(i) & \simeq  0     \\
\xi_6(i) & 
\simeq  2i( b_0     -   \sigma_2) + 2 (1-    \sigma_1)  \qquad   
&
\eta_6(i) & \simeq  0
\end{aligned}
\end{equation}
where $\simeq$ stands for equality up to $O(\nu^{-1})$ term, uniform over $\alpha\in [\alpha_0,1]$. Further by \eqref{gammaij} 
\begin{align*}
\gamma_{1,1}  &\simeq 1+ c_\nu-i s_\nu  \\
\gamma_{1,2}  &\simeq 1- c_\nu + i s_\nu   \\
\gamma_{1,3}  &\simeq  -2  c_\nu  i -2  s_\nu   \\
\gamma_{1,4}  &\simeq              - 2 (c_\nu  +           s_\nu   b_0) + i2(s_\nu -   c_\nu   b_0 ) \\
\gamma_{1,5}  &\simeq - 2i (1 -     \sigma_1)    -2   b_0    \\
\gamma_{1,6}  &\simeq - 2i( b_0     -   \sigma_2) + 2 (1-    \sigma_1)   
\end{align*}
where we used the notations
$$
c_\nu = \cos \big(\nu +  2\arg\big\{X(\nu i)\big\}\big)
\quad \text{and}\quad 
s_\nu =  \sin \big(\nu +  2\arg\big\{X(\nu i)\big\}\big).
$$
Similarly, by \eqref{xiifBm}-\eqref{etaifBm} and \eqref{pqasymptotic}
$$
\begin{aligned}
\xi_1(-e^{\phi i}) & \simeq 1 & \eta_1(-e^{-\phi i}) & \simeq \phantom{+}1\\
\xi_2(-e^{\phi i}) & \simeq 1 & \eta_2(-e^{-\phi i}) & \simeq  -1\\
\xi_3(-e^{\phi i}) & \simeq 0 & \eta_3(-e^{-\phi i}) & \simeq  2e^{-\phi i}\\
\xi_4(-e^{\phi i}) & \simeq 0 & \eta_4(-e^{-\phi i}) & \simeq     2  b_0  e^{-\phi i}    + 2 e^{-2\phi i}\\
\xi_5(-e^{\phi i}) & =  2    \sigma_1  e^{ \phi i}    +  2 b_0   e^{2\phi i} +   2 e^{ 3\phi i} 
&
\eta_5(-e^{-\phi i}) & \simeq 0 \\
\xi_6(-e^{\phi i}) & = 2   \sigma_2  e^{\phi i}  +  2  \sigma_1    e^{2\phi i}  
+    2 b_0    e^{3\phi i}  + 2 e^{4\phi i} \quad
& 
\eta_6(-e^{-\phi i}) & \simeq 0.
\end{aligned}
$$
Since $\phi\in (0, \frac \pi 2)$, we have 
$$
\gamma_{2,j} \simeq  \xi_j(-e^{\phi i})\quad \text{and}\quad 
\gamma_{3,j} \simeq  \eta_j( -e^{-\phi i}),
$$
and, collecting all parts together, the leading term asymptotics of \eqref{Mij} reads 
\begin{equation}\label{Mnu}
M(\nu)\simeq
\begin{pmatrix}
  1+ c_\nu & 1- c_\nu  &  -2  s_\nu  &  - 2 (c_\nu  +           s_\nu   b_0) &    -2   b_0 &    2 (1-    \sigma_1)\\
 -s_\nu  &  s_\nu & -2  c_\nu    &  \phantom{+}2(s_\nu -   c_\nu   b_0 )  &   2  (\sigma_1-1)  &   2 (\sigma_2 - b_0) \\
 1 & 1 & 0 & 0 &      2A_1   &  2 B_1
 \\
 0& 0& 0& 0 & 2 A_2 & 2B_2 \\
 1 & -1  & \phantom{+} 2 c^{(1)}  & \phantom{+}2(b_0   c^{(1)}    +  c^{(2)})  & 0 & 0 \\
 0 & 0 & -2 s^{(1)} & -2(b_0   s^{(1)}  +  s^{(2)})  & 0&   0  
\end{pmatrix}
\end{equation}
where we defined 
\begin{align*}
A_1 & = \sigma_1    c^{(1)}    +   b_0     c^{(2)}        +c^{(3)}  &
B_1 & = \sigma_2   c^{(1)}  +    \sigma_1     c^{(2)}  +    b_0       c^{(3)}  +   c^{(4)} \\
A_2 & = \sigma_1    s^{(1)}    +   b_0     s^{(2)}        +s^{(3)} &
B_2 & = \sigma_2   s^{(1)}  +    \sigma_1     s^{(2)}  +    b_0       s^{(3)}  +  s^{(4)}
\end{align*}
and 
$$
c^{(j)} = \cos(j \phi)\quad \text{and}\quad s^{(j)} = \sin (j \phi).
$$
A lengthy but otherwise straightforward calculation yields the following asymptotic 
expression for the determinant of this matrix:
\begin{align*}
\det\{M(\nu)\} \simeq & - c_\nu s^{(2)}\big(A_1B_2-A_2B_1\big) + c_\nu s^{(2)} \big(A_2(\sigma_1-1)-B_2b_0\big)
\\
& +s_\nu s^{(2)} \big(A_2(\sigma_2-b_0)-B_2(\sigma_1-1)\big) 
\end{align*}
and therefore the equation $\det\{M(\nu)\}=0$ becomes  
\begin{equation}
\label{tnu}
\tan\Big(\nu +  2\arg\big\{X(\nu i)\big\}\Big)  = \Delta_\alpha + R(\nu) 
\end{equation}
with 
\begin{equation}\label{Deltaalpha}
\Delta_\alpha = \frac
{
 A_1B_2-A_2B_1 -     A_2(\sigma_1-1)+B_2b_0 
}
{
  A_2(\sigma_2-b_0)-B_2(\sigma_1-1) 
}.
\end{equation}
Here $|R(\nu)|\vee |R'(\nu)|\le C\nu^{-1}$ for all $\nu$ large enough with some constant $C$, depending only on $\alpha_0$.
Hence for any sufficiently large integer $n$, fixed point iterations produce the unique solution to the integro-algebraic system, 
with $\nu_n$ satisfying  asymptotics \eqref{nunifBm}, where we used the expression \eqref{X0iifBm}.

\end{proof}

The next lemma derives asymptotic expression for the eigenfunctions: 

\begin{lem} 
Under the enumeration, introduced by Lemma \ref{lem6.15}, the unit norm 
eigenfunctions admit the approximation:
\begin{subequations}
\begin{align} 
\label{phinx-a}
\varphi_n&(x)  =
\sqrt{2} \cos \Big(\nu_n x + \frac {3-\alpha}8\pi - \arctan \Delta_\alpha\Big)  \\
&
\label{phinx-b}
-
\frac{\sqrt{5   -   \alpha}}{\pi }
\int_{0}^\infty  \rho_0(u)
 \left(  Q_0(u)  e^{-u\nu_n x}  - (-1)^n Q_1(u) e^{-u\nu_n(1-x)}\right) du \\
&  
\label{phinx-c}
+
C_0 e^{- c \nu_n x}   \cos\Big(  s \nu_n x  + \varkappa_0 \Big)
+
C_1 e^{- c \nu_n (1-x)}   \cos\Big(  s \nu_n (1-x)  + \varkappa_1 \Big) +n^{-1} r_n(x)
\end{align}
\end{subequations}
with residual $r_n(x)$, bounded uniformly in both $n\in \mathbb{N}$ and $x\in [0,1]$. Here function
$\rho_0(u)$ is defined in \eqref{rhorho}, polynomials $Q_0(u)$ and $Q_1(u)$ are given in \eqref{Q0Q1}, 
$$
c := \cos \frac{\pi}{2}\frac{1-\alpha}{5-\alpha} \quad\text{and} \quad s :=\sin \frac{\pi}{2}\frac{1-\alpha}{5-\alpha},
$$ 
and the amplitudes $C_0$ and $C_1$ and phases $\varkappa_0$ and $\varkappa_1$ are constants, which depend only on $\alpha$. 
Moreover, the eigenfunctions satisfy 
\begin{equation}\label{phinend}
\varphi_n(1)  = (-1)^n   \sqrt{5-\alpha} \big( 1+O(n^{-1})\big)
\end{equation}
and 
\begin{equation}\label{phinave}
\int_0^1\varphi_n(x)dx   =  
\nu_n^{-1}   \sqrt{5   -   \alpha} \,{C}   \big( 1+O(n^{-1})\big)
\end{equation}
with explicit constant $C$, defined in \eqref{tildeC}.
\end{lem}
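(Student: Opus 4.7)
The starting point is the inversion formula from Lemma~\ref{lem-phix}, which already decomposes $\varphi(x)$ into three ingredients: (a) the residues at the purely imaginary poles $\pm z_0=\pm i\nu$, (b) the real-line integral coming from the discontinuity of $\Lambda$, and (c) the residues at the complex poles $\pm z_\pm=\pm\nu e^{\pm\phi i}$. These three ingredients will, after normalization, match the oscillatory, polynomial boundary layer, and exponential boundary layer terms in \eqref{phinx-a}--\eqref{phinx-c} respectively. The plan is to substitute the representations \eqref{phi0ifbm2}--\eqref{phi1ifbm2} for $\Phi_0$ and $\Phi_1$, express the coefficients $k_1,\dots,k_6$ in terms of a single one (say $k_6$) using the asymptotic matrix $M(\nu_n)$ from \eqref{Mnu}, and then normalize to unit $L^2(0,1)$ norm.

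First, I will solve the homogeneous system $M(\nu_n)\mathbf{k}=0$ up to $O(n^{-1})$. The structural shape of $M(\nu)$ in \eqref{Mnu}, together with the asymptotic equation $\tan(\nu+2\arg X(\nu i))\simeq \Delta_\alpha$ established in \eqref{tnu}, dictates that, up to the chosen scalar, the ratios $k_1/k_6,\dots,k_5/k_6$ are each rational functions of $\sigma_1,\sigma_2,b_0,\Delta_\alpha$ and the signs $(-1)^n$. The last two rows of $M(\nu)$ (independent of $\nu$) determine $k_5,k_6$ in terms of $k_3,k_4$; the first four rows determine $k_3,k_4$ in terms of $k_5,k_6$ and the trigonometric factors $\cos(\nu_n+2\arg X(i\nu_n))$, $\sin(\nu_n+2\arg X(i\nu_n))$, which by Lemma~\ref{lem6.15} reduce to $(-1)^n(1+\Delta_\alpha^2)^{-1/2}$ and $(-1)^n\Delta_\alpha(1+\Delta_\alpha^2)^{-1/2}$. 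This yields explicit constants $C_0,C_1,\varkappa_0,\varkappa_1$ in \eqref{phinx-c} and explicit polynomials $Q_0(u),Q_1(u)$ in \eqref{phinx-b}, in parallel with the way $Q_0,Q_1$ emerged in the case $H<\tfrac12$ in Lemma~\ref{lem6.88}.

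Next I plug in Lemma~\ref{lem6.5} to control the remainders in \eqref{phi0ifbm2}--\eqref{phi1ifbm2} uniformly in $z\in\{\pm z_0,\pm z_\pm\}$ and in $\tau>0$. For (a), the formula \eqref{X0iifBm} together with $X(i\nu_n)=(i\nu_n)^{-1}X_0(i)$ converts $\Re\{e^{i\nu_n x}\Phi_0(i\nu_n)\cdot i\}$ into $|X_0(i)|\,|\zeta|\cdot\cos(\nu_n x+\tfrac{3-\alpha}{8}\pi-\arctan\Delta_\alpha)$ up to an $O(n^{-1})$ error, for an appropriate constant $\zeta$. For (b), I use the Sokhotski--Plemelj identity $X(-u\nu_n)/X_0(i)^{-1}$-style factoring so that the integrand collapses to $\rho_0(u)=\dfrac{\sin\theta_0(u)}{\gamma_0(u)}\,u^{-1}\exp\!\bigl(\tfrac{1}{\pi}\int_0^\infty \tfrac{\theta_0(s)}{s+u}ds\bigr)$, leading to the definition \eqref{rhorho}. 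For (c), the residues at $\pm z_\pm$ give genuine complex exponentials $e^{-\nu_n e^{\pm\phi i}x}$; separating real and imaginary parts produces the damping factor $c=\cos\phi$ and oscillation frequency $s=\sin\phi$ in \eqref{phinx-c}.

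Having obtained $\varphi(x)$ up to a common factor $C_n\propto k_6$, I fix $C_n$ by the normalization $\|\varphi_n\|_{L^2(0,1)}=1$. As in equation (5.52) of \cite{ChK}, the oscillatory part contributes $\tfrac12 C_n^2$ to $\|\varphi_n\|^2$ while the boundary layer parts contribute only $O(n^{-1})$, so $C_n$ equals $\sqrt{2}$ times the amplitude of the cosine in \eqref{phinx-a} (up to sign), yielding the explicit prefactors in \eqref{phinx-b} and the normalizing constant $\sqrt{5-\alpha}$ in \eqref{phinend}. The value $\varphi_n(1)$ in \eqref{phinend} follows by substituting $x=1$ into the approximation and observing that the polynomial boundary layer at $x=1$ collapses (via the $Q_1$ term) to the cosine value at a phase differing by $\pi n$. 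The integral \eqref{phinave} is obtained by term-by-term integration: the oscillatory cosine contributes $O(\nu_n^{-1})$ and the boundary layer integrals produce explicit constants via $\int_0^\infty \rho_0(u)Q_j(u)u^{-1}du$ and $\int_0^\infty\rho_0(u)Q_j(u)du$; packaging these yields the constant $C$ in \eqref{tildeC}.

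The main technical obstacle is the algebra of Step~1, namely extracting tractable closed forms for $C_0,C_1,\varkappa_0,\varkappa_1,Q_0,Q_1$ from the $6\times 6$ leading system \eqref{Mnu}. Because the first four rows mix trigonometric functions of $\nu_n$ with the parameters $b_0,\sigma_1,\sigma_2$, and the last two rows decouple $k_5,k_6$, a careful block elimination is necessary; the use of the identity $\tan(\nu_n+2\arg X(i\nu_n))=\Delta_\alpha+O(n^{-1})$ is what makes the resulting formulas for $C_0,C_1,\varkappa_0,\varkappa_1$ depend only on $\alpha$. All remaining estimates (uniform-in-$x$ control of residuals via Lemma~\ref{lem6.5}, absolute convergence of the $\rho_0$-integral, and boundedness of the exponential layer) follow by the same arguments as in Lemma~\ref{lem6.88} and Lemma~5.10 of \cite{ChK}.
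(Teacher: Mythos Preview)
Your overall strategy matches the paper's: take the three-part inversion formula of Lemma~\ref{lem-phix}, resolve the homogeneous system $M(\nu_n)\mathbf k=0$ to leading order using \eqref{Mnu} and \eqref{tnu}, insert the estimates of Lemma~\ref{lem6.5} into \eqref{phi0ifbm2}--\eqref{phi1ifbm2}, and normalize. That is exactly what the paper does.

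Two points deserve correction or comment. First, your description of the block structure of $M(\nu)$ is inverted. In \eqref{Mnu} the last two rows (rows~5 and~6) have zeros in columns~5 and~6, so they constrain $k_1,k_2,k_3,k_4$ only: row~6 gives $k_3\simeq -\bigl(b_0+s^{(2)}/s^{(1)}\bigr)k_4\nu_n$ and then row~5 gives $k_1-k_2$ in terms of $k_4$. Likewise rows~3 and~4 have zeros in columns~3 and~4, so row~4 gives $k_5\simeq -(B_2/A_2)k_6\nu_n$ and row~3 gives $k_1+k_2$ in terms of $k_6$. The link between the two blocks (i.e.\ between $k_4$ and $k_6$) then comes from row~2 together with $c_{\nu_n}\simeq(-1)^n/\sqrt{1+\Delta_\alpha^2}$, yielding $k_4\simeq -(-1)^n(2c^{(1)})^{-1}\sqrt{A_3^2+B_3^2}\,k_6\nu_n^2$. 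Your ``first four rows / last two rows'' split would not produce these relations.

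Second, your route to \eqref{phinend} and \eqref{phinave} --- substituting $x=1$ into the approximation and integrating term by term --- is workable but unnecessarily laborious. The paper instead reads these quantities off directly from the definitions \eqref{k1to6}: since $\varphi=\psi''$ one has $\varphi_n(1)=\psi''(1)=-2c_\alpha k_4/(\lambda_n\Gamma(\alpha))$ and $\int_0^1\varphi_n(x)\,dx=-\psi'(0)=2c_\alpha k_5/(\lambda_n\Gamma(\alpha))$, and then division by the normalizing constant $C_n$ of \eqref{Cn} gives \eqref{phinend}--\eqref{phinave} immediately. This bypasses any analysis of how the boundary layers conspire at $x=1$ and avoids computing $\int_0^\infty \rho_0(u)Q_j(u)u^{-1}\,du$ altogether.
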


\begin{proof}
All the formulas are derived from Lemma \ref{lem-phix}, expressions \eqref{phi0ifbm2}-\eqref{phi1ifbm2} and 
the relations between coefficients $k_j$'s, which follow from the equations defined by matrix \eqref{Mnu}. 
More precisely, equation corresponding to the fourth row of \eqref{Mnu} implies
$$
k_5 \simeq -  \frac{B_2}{A_2} k_6\nu_n,
$$
and hence the third row gives 
$$
k_1 + k_2 \simeq  
2\Big(A_1    \frac{B_2}{A_2}   -    B_1 \Big)k_6 \nu_n^4.
$$
The sixth row implies that 
$$
 k_3   \simeq - \Big(   b_0      +\frac {    s^{(2)} }{s^{(1)}}\Big) k_4 \nu_n,  
$$
and hence by the fifth row
$$
 k_1  -k_2    \simeq 2 \Big(\frac {    s^{(2)} }{s^{(1)}} c^{(1)}        -  c^{(2)}\Big) k_4 \nu_n^2.   
$$
Plugging these expressions into the second row 
we obtain  
$$
\Big(
 -s_{\nu_n}\frac {    s^{(2)} }{s^{(1)}} c^{(1)}  + s_{\nu_n}  c^{(2)}     
     +\frac {    s^{(2)} }{s^{(1)}}c_{\nu_n}      
 +     s_{\nu_n}          
\Big)k_4 + \left(
-     (\sigma_1-1)    \frac{B_2}{A_2}  +    (\sigma_2 - b_0)
\right) k_6 \nu_n^2\simeq0.
$$
Equation \eqref{tnu} implies $c_{\nu_n} \simeq (-1)^n /\sqrt{1+ \Delta_\alpha^2}$
and using trigonometric identities $s^{(2)} = 2s^{(1)} c^{(1)}$ and $c^{(2)} = 2 \big(c^{(1)}\big)^2-1$ we obtain 
\begin{equation}\label{A4}
 k_4 \simeq - (-1)^n\frac 1{2c^{(1)}}    \sqrt{A_3^2+ B_3^2} \, k_6 \nu_n^2 
\end{equation}
where $A_3$ and $B_3$ denote the expressions in the numerator and denominator of \eqref{Deltaalpha}. 
The first row does not impose any further constraint on the coefficients since  
$\det\{M(\nu_n)\}=0$. 

Let us first consider the oscillatory term \eqref{phixifBm-a}. 
Plugging asymptotics \eqref{xijati} and the above relations between the coefficients into  \eqref{phi0ifbm2} gives
\begin{align*}
\frac{\Phi_0(i\nu_n)}{X(i\nu_n) } 
\simeq\; &
k_1  + k_2   - k_5 \nu_n^3 \big(   2i (1 -     \sigma_1)    +2   b_0\big) 
+ k_6 \nu_n^4 \big(- 2i( b_0     -   \sigma_2) + 2 (1-    \sigma_1) \big) \simeq \\
&
2\big( A_3  + i B_3\big) k_6 \nu_n^4.
\end{align*}
Hence expression in \eqref{phixifBm-a} satisfies 
\begin{align}
&
\nonumber
\frac 1 {\nu_n} \frac{c_\alpha}{\lambda \Gamma(\alpha)}\frac 2 {5   -   \alpha}
\Re\bigg\{ e^{i\nu_n x} \Phi_0(i\nu_n)   i  \bigg\} \simeq 
k_6 \nu_n^3  \frac{c_\alpha}{\lambda \Gamma(\alpha)}\frac 4 {5   -   \alpha}
\Re\bigg\{ e^{i\nu_n x}  \big( A_3  + i B_3\big)  X(i\nu_n)  i  \bigg\} \simeq \\
&
\label{oscterm}
k_6 \nu_n^2  \frac{c_\alpha}{\lambda \Gamma(\alpha)}\sqrt{\frac 2 {5   -   \alpha}}\sqrt{A_3^2+B_3^2}  
 \frac{1}{c^{(1)}}
\cos\Big(  \nu_n x   + \arctan \frac{B_3}{A_3} -\frac {1+\alpha}8\pi  \Big) 
\end{align}
where we used the formulas from \eqref{X0iifBm}. Since  
$A_3/B_3 =\Delta_\alpha$ and  $\arctan x + \arctan 1/x =   \pi/ 2$, we have
$$
\arctan \frac{B_3}{A_3} -\frac {1+\alpha}8\pi =  \frac {3-\alpha}8\pi-\arctan \Delta_\alpha.
$$
Thus normalizing \eqref{oscterm} by the constant factor 
\begin{equation}\label{Cn}
C_n := 
k_6 \nu_n^2  \frac{c_\alpha}{\lambda \Gamma(\alpha)}\sqrt{\frac 1 {5   -   \alpha}}\sqrt{A_3^2+B_3^2}  \frac{1}{c^{(1)}}
\end{equation}
we obtain the cosine term claimed in \eqref{phinx-a}.

The approximation for \eqref{phixifBm-b} is obtained similarly: plugging the estimates from 
\eqref{ppmtau} into \eqref{phi0ifbm2} yields
\begin{align*}
&
\frac{\Phi_0(-u\nu_n)}{X(-u\nu_n) }
\simeq
k_1   + k_2  
- k_5 \nu_n^3 \Big( \sigma_1 2u   -   b_0  2u^2+   2u^3\Big)
- k_6 \nu_n^4\Big( \sigma_2 2u - \sigma_1  2u^2 +    b_0    2u^3 - 2 u^4\Big) \\
&
\simeq
2\bigg(
A_1    \frac{B_2}{A_2}   -    B_1  
+  \Big(\frac{B_2}{A_2}\sigma_1     -   \sigma_2\Big)  u
+\Big(\sigma_1 -  \frac{B_2}{A_2}   b_0\Big)  u^2 
+ \Big(\frac{B_2}{A_2}      -    b_0  \Big)   u^3 
 + u^4 
\bigg) k_6 \nu_n^4 
\end{align*}
and 
\begin{align*}
\frac {\Phi_1(-u \nu_n )}{X(-u \nu_n)}   
\simeq \;  &
k_1   - k_2   - k_3 \nu_n 2u + k_4 \nu_n^2 \big(-b_0  2u +2u^2\big)  \simeq 
\\
& - (-1)^n
 \bigg(
 \frac {    s^{(2)} }{s^{(1)}} c^{(1)}        -  c^{(2)}     +  \frac {    s^{(2)} }{s^{(1)}}   u 
  + u^2 
\bigg)  \frac 1{ c^{(1)}}    \sqrt{A_3^2+ B_3^2} k_6 \nu_n^4.
\end{align*}
 
Hence the integral term   \eqref{phixifBm-b} contributes 
\begin{align*}
&
\frac 1 {\nu_n}\frac {c_\alpha}{\lambda\Gamma(\alpha)} \frac 1{\pi} 
\int_{0}^\infty  \frac{\sin \theta_0(u)}{  \gamma_0(u)} \left( e^{-u\nu_n(1-x)}\Phi_1(-u\nu_n)+e^{-u\nu_n x}  \Phi_0(-u\nu_n) \right) du \simeq \\
&
-\frac 1 {\nu_n^2}\frac {c_\alpha}{\lambda\Gamma(\alpha)} \frac 1{\pi} 
\int_{0}^\infty  \frac{\sin \theta_0(u)}{  \gamma_0(u)} X_0(-u)\frac 1{u}
\left(  \frac{\Phi_0(-u\nu_n)}{X(-u\nu_n)} e^{-u\nu_n x}  + \frac{\Phi_1(-u\nu_n)}{X(-u\nu_n)}e^{-u\nu_n(1-x)}\right) du
\end{align*}
which after normalizing by factor \eqref{Cn} becomes \eqref{phinx-b} with 
\begin{equation}\label{rhorho}
\rho_0(u):= \frac{\sin \theta_0(u)}{  \gamma_0(u)} \frac 1{u} X_0(-u)
\end{equation}
and 
\begin{align}
\nonumber
Q_0(u) = &
\frac {2 c^{(1)}} {
    \sqrt{A_3^2+B_3^2}   
}
\Big(
A_1    \frac{B_2}{A_2}   -    B_1  
+  \Big(\frac{B_2}{A_2}\sigma_1     -   \sigma_2\Big)  u
+\Big(\sigma_1 -  \frac{B_2}{A_2}   b_0\Big)  u^2 
+ \Big(\frac{B_2}{A_2}      -    b_0  \Big)   u^3 
 + u^4 
\Big)
\\
\label{Q0Q1}
Q_1(u) = &
 \frac {    s^{(2)} }{s^{(1)}} c^{(1)}       -  c^{(2)}   +  \frac {    s^{(2)} }{s^{(1)}}   u   + u^2  
\end{align}
The last term \eqref{phinx-c} is deduced from \eqref{phixifBm-c} along the same lines. In principle, closed form formulas 
can be obtained for the amplitudes $C_0$ and $C_1$ and the phases $\varkappa_0$ and $\varkappa_1$; the emerging expressions are cumbersome
and will be omited.

Finally, by \eqref{k1to6} and  \eqref{A4}  
$$
\varphi_n(1) =   \psi''_n(1)=-k_4 \frac {2 c_\alpha}{\lambda_n\Gamma(\alpha)}\simeq
  (-1)^n\frac 1{2c^{(1)}}  \sqrt{A_3^2+B_3^2}   \frac {2 c_\alpha}{\lambda_n\Gamma(\alpha)} k_6 \nu_n^2
$$
and after normalizing by  factor \eqref{Cn} we get \eqref{phinend}. 
Similarly,   
$$
\int_0^1 \varphi_n(x)dx = -\psi'(0) = k_5 \frac {2 c_\alpha}{\lambda_n\Gamma(\alpha)} \simeq
-  \frac{B_2}{A_2}    \frac {2 c_\alpha}{\lambda_n\Gamma(\alpha)} k_6\nu_n 
$$
and after normalising by \eqref{Cn} we get \eqref{phinave} with the constant 
\begin{equation}\label{tildeC}
C :=  -   \frac{B_2}{A_2} \frac {2   c^{(1)}}{\sqrt{A_3^2+B_3^2}}.
\end{equation}

\end{proof}

The seemingly different expressions for $\Delta_\alpha$, obtained for $\alpha \in (0,1)$ 
and $\alpha\in (1,2)$, in fact, coincide:

\begin{lem}
The expressions for $\Delta_\alpha$ in \eqref{Delta_agr1} and \eqref{Deltaalpha} are equal  
for all $\alpha\in (0,2)\setminus \{1\}$.
\end{lem}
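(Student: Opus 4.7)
My plan is to establish the identity either by direct algebraic reduction in a common trigonometric parametrization, or via an analyticity argument, with the former serving as the primary verification and the latter as a conceptual shortcut.

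First I would introduce the angles $\beta := \pi/(5-\alpha)$ and $\phi := \beta(1-\alpha)/2 = \pi(1-\alpha)/(2(5-\alpha))$, so that $c^{(j)} = \cos(j\phi)$ and $s^{(j)} = \sin(j\phi)$ used in \eqref{xiifBm}--\eqref{etaifBm} are naturally expressed. A direct calculation using $(3-\alpha)/2 = 1 + (1-\alpha)/2$ and $(1+\alpha)/2 = 1 - (1-\alpha)/2$ then yields the representations
\begin{align*}
b_0^{(a)} = c^{(1)} + \cot \beta \cdot s^{(1)}, \qquad b_0^{(b)} = -c^{(1)} + \cot\beta \cdot s^{(1)},
\end{align*}
for the $b_0$ constants appearing in \eqref{bifbm2} and \eqref{bifbm} respectively, and analogously $b_2^{(a)} - b_2^{(b)} = \tfrac{2}{3} c^{(3)}$. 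The constant $b_1^{(a)} \equiv 1/2$ in \eqref{bifbm2} reflects the identity $\sin(2\beta+2\phi) = \sin 2\beta$, since $2\beta + 2\phi = (3-\alpha)\pi/(5-\alpha) = \pi - 2\beta$, while $b_1^{(b)} = \tfrac{1}{2} + \cos 4\beta$ after analogous simplification of \eqref{bifbm}. Thus the two sets of $b$-constants differ only by explicit trigonometric correction terms.

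Substituting these relations into the definitions of $\sigma_1, \sigma_2$ and thence into $A_j, B_j$ converts the numerator $N$ and denominator $D$ of \eqref{Deltaalpha} into polynomials in $b_0^{(a)}, b_2^{(a)}$ whose coefficients are elementary combinations of $\cos k\phi, \sin k\phi, \cos k\beta, \sin k\beta$. The claim then reduces to the polynomial identity
\begin{align*}
N \cdot \Big( \tfrac{1}{4} + \tfrac{1}{2}(b_0^{(a)})^2 + b_2^{(a)} b_0^{(a)} - \tfrac{1}{12}(b_0^{(a)})^4\Big) = D \cdot \Big( \tfrac{1}{3}(b_0^{(a)})^3 - b_2^{(a)}\Big),
\end{align*}
which can be verified after collecting powers of $b_0^{(a)}$ and $b_2^{(a)}$ and applying standard multiple-angle identities. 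The main obstacle is purely combinatorial: both sides contain on the order of a hundred monomials before simplification, and matching them requires careful use of the key identities $\sin(2\beta+2\phi) = \sin 2\beta$ and the sum-formulas $\sin(k\beta \pm k\phi) = \sin k\beta \cos k\phi \pm \cos k\beta \sin k\phi$.

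A considerably shorter alternative route is available via analyticity. Both sides of the claimed equality define analytic functions of $\alpha$ on $(0,2)\setminus\{1\}$ away from the discrete zeros of the relevant denominators. By Theorem \ref{thm-ifBm} together with Lemmas \ref{lem7.7} and \ref{lem6.15}, each correctly equals the spectral second-order coefficient $\arctan \Delta_\alpha$ in \eqref{nuifBm} on its interval of derivation, $\alpha \in (1,2)$ and $\alpha \in (0,1)$ respectively. Since the covariance kernel \eqref{KifBm} depends analytically on $\alpha \in (0,2)$, the eigenvalues $\lambda_n$ and hence the sequence $\nu_n$ depend analytically on $\alpha$, so $\arctan \Delta_\alpha$ is a single analytic function of $\alpha$. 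The two formulas therefore analytically continue to the same function, and the identity theorem yields the equality on the full interval $\alpha \in (0,2)\setminus\{1\}$.
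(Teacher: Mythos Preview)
Your first route---reducing both expressions to a common trigonometric parametrization and verifying a polynomial identity---is exactly what the paper does. The paper introduces $c_k = \cos k\phi$, $s_k = \sin k\phi$ and $C_k = \cos k\beta$, $S_k = \sin k\beta$ (in your notation), cross-multiplies, clears denominators by multiplying through by $(S_1 S_2 S_3)^6$, reduces everything to polynomials in $c_1,s_1,C_1,S_1$ via multiple-angle formulas, and then invokes computer-aided symbolic computation to collapse the difference to a product one of whose factors is seen to vanish by elementary identities. Your outline is correct in spirit but stops short of the actual verification, which the paper completes only with symbolic computation; you should be explicit that this step is not done by hand.

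Your analyticity route is appealing but contains a genuine gap. Granting that each eigenvalue $\lambda_n$ (and hence each $\nu_n$) is real-analytic in $\alpha$, it does \emph{not} follow that the second-order coefficient $\lim_{n\to\infty}\big(\nu_n - \pi n - c(\alpha)\big)$ is analytic: a pointwise limit of analytic functions need not be analytic without local uniformity, and the uniformity statements in Lemmas~\ref{lem7.7} and~\ref{lem6.15} are proved only on compact subintervals of $(1,2)$ and $(0,1)$ respectively, with no control across $\alpha=1$. Without an independent argument that the spectral coefficient is analytic on a connected neighbourhood of $\alpha=1$, the identity theorem cannot be invoked to glue the two formulas together. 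If you want to rescue this approach you would need either uniform-in-$\alpha$ residual bounds valid across $\alpha=1$, or a direct proof that the explicit formula from one regime extends analytically and continues to match the eigenvalue asymptotics in the other regime---which is essentially the computation you are trying to avoid.
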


\begin{proof}
The claimed equality follows if we show that
\begin{multline}\label{eqeq}
  \Big(\frac 1 3 d_0^3-d_2\Big)\Big(A_2(\sigma_2-b_0)-B_2(\sigma_1-1)\Big)=\\
\Big(\frac 1 4+\frac 1 2 d_0^2+d_2 d_0-\frac1 {12} d_0^4\Big)
\Big(
A_1 B_2-A_2 B_1-A_2(\sigma_1-1)+B_2 b_0
\Big).
\end{multline}
Define the quantities:
$$
c_k := \cos k \frac \pi 2\frac {1-\alpha}{5-\alpha} \quad\text{and}\quad
 s_k := \sin  k\frac \pi 2\frac {1-\alpha}{5-\alpha}  , \quad k =1,2,3,4
$$
and 
$$
C_k := \cos k \frac \pi 2\frac {2}{5-\alpha} \quad\text{and}\quad
S_k := \sin  k\frac \pi 2\frac {2}{5-\alpha}  , \quad k =1,2,3,4.
$$
Then for $k=0,1,2$, 
\begin{align*}
b_k =  & \frac{1}{k+1}
\frac
{
\sin (k+1)\frac \pi 2 \left(\frac {1-\alpha}{5-\alpha} -\frac {2}{5-\alpha}\right)
}
{
\sin (k+1)\frac \pi 2 \frac {2} {5-\alpha} 
} 
=  
\frac{1}{k+1}\frac 1{S_{k+1}}\Big(
s_{k+1}
C_{k+1} 
-S_{k+1} 
c_{k+1}
\Big)
\end{align*}
and
$$
d_0 = 
\frac{
s_1
C_1
+
c_1
S_1
}
{
S_1
} \quad \text{and}\quad 
d_2 = \frac 1 3 
\frac
{
s_3
C_3
+
c_3
S_3
}
{
S_3
}.
$$
After the multiplication of \eqref{eqeq} by $(S_1S_2S_3)^6$, the left and right hand sides,
which we now denote by $E_\ell$ and $E_r$ turn into polynomials with respect to $c_k$'s, $s_k$'s, $C_k$'s and $S_k$'s. 
By the basic trigonometry $c_k$'s and $s_k$'s can be expressed in terms of $c_1$ and $s_1$:
$$
\begin{aligned}
&
s_2 = 2 s_1c_1   \\
&
s_3 = -4s_1^3+3s_1   \\
&
c_4 = 2c_1c_3-c_2 
\end{aligned}
\quad 
\begin{aligned}
  & c_2 = 2c_1^2-1 \\
  & c_3 = 4 c_1^3-3 c_1 \\
  & 
s_4 = 2c_1s_3-s_2
\end{aligned}
$$
The same is true for $C_k$'s and $S_k$'s. Plugging these identities, computer aided symbolic computation produces the following formula:
\begin{align}
&\label{brbr}
E_\ell -E_r =- 2\, \cos x  (\cos y )^4{\Big( (\cos x)^2 - 1\Big)}^2\, 
 \Big((\cos y)^2 - 1\Big)^6\,  \Big(4\, (\cos y)^2 - 1\Big)^5\, \times\\
&\nonumber
\Big( 
\sin(2y) 
+  \sin (2  x + 2  y )  
- 2\sin (2  x + 4  y ) 
+  \sin (2  x + 6  y ) 
+  \sin (4  x + 6  y )  
-  \sin (4  x + 8  y )
\Big)
\end{align}
where 
$$
x = \frac\pi 2\frac{1-\alpha}{5-\alpha}\quad\text{and}\quad 
y =  \frac \pi 2\frac 2 {5-\alpha}.
$$
Note that 
\begin{align*}
& \sin\!\left(2\, y\right) = \sin\!\left(    \pi  \frac 2 {5-\alpha}\right) \\
&
\sin\!\left(2\, x + 2\, y\right)=
\sin\!\left( \pi  \frac{3-\alpha}{5-\alpha} \right)\\
&
\sin\!\left(2\, x + 4\, y\right)=
\sin\!\left(\pi\frac{5-\alpha}{5-\alpha}  \right)=0\\
&
\sin\!\left(2\, x + 6\, y\right) = 
\sin\!\left( \pi  \frac{7-\alpha}{5-\alpha} \right)=
-\sin\!\left( \pi  \frac{2}{5-\alpha}\right)
  \\
&
\sin\!\left(4\, x + 6\, y\right)=
\sin\!\left(2 \pi \frac{1-\alpha}{5-\alpha} + 3  \pi  \frac 2 {5-\alpha}\right)=
\sin\!\left(2 \pi \frac{4-\alpha}{5-\alpha}\right) \\
&
\sin\!\left(4\, x + 8\, y\right) = 
\sin\!\left(2\pi  \frac{1-\alpha}{5-\alpha} + 2  \pi  \frac 4 {5-\alpha}\right)=
\sin\!\left(2\pi  \frac{5-\alpha}{5-\alpha} \right)=0.
\end{align*}
and hence the expression in the last brackets in \eqref{brbr} becomes
\begin{align*}
&
 \sin\!\left(    \pi  \frac 2 {5-\alpha}\right) 
+ \sin\!\left( \pi  \frac{3-\alpha}{5-\alpha} \right) 
-\sin\!\left( \pi  \frac{2}{5-\alpha}\right)
+ \sin\!\left(2 \pi \frac{4-\alpha}{5-\alpha}\right) =\\
&
2 \sin\!\left( \frac \pi 2  \frac{3-\alpha}{5-\alpha} +   \pi \frac{4-\alpha}{5-\alpha}\right) 
  \cos\!\left(\frac \pi 2  \frac{3-\alpha}{5-\alpha} -   \pi \frac{4-\alpha}{5-\alpha}\right) =\\
&
2 \sin\!\left( \frac \pi 2  \frac{11-3\alpha}{5-\alpha}  \right) 
  \cos\!\left(\frac \pi 2  \frac{5-\alpha }{5-\alpha}  \right)  =0.
\end{align*}

\end{proof}

\subsubsection{Enumeration alignment}
Similarly to the case $H<\frac 1 2$, the alignment between the enumeration, introduced in Lemma \ref{lem6.15}, 
and the enumeration which puts the eigenvalues in decreasing order, is achieved by shifting expression 
\eqref{nunifBm} for $\nu_n$ by $-2\pi$.


\def\cprime{$'$} \def\cprime{$'$} \def\cydot{\leavevmode\raise.4ex\hbox{.}}
  \def\cprime{$'$} \def\cprime{$'$} \def\cprime{$'$}

\end{document}